\newtheorem{theorem}{Theorem}[section]
\newtheorem{lemma}[theorem]{Lemma}
\newtheorem{proposition}[theorem]{Proposition}
\newtheorem{corollary}[theorem]{Corollary}
\theoremstyle{definition}
\newtheorem{definition}[theorem]{Definition}
\newtheorem{remark}[theorem]{Remark}
\newcommand{\Z}{\mathbb{Z}}
\newcommand{\Q}{\mathbb{Q}}
\newcommand{\C}{\mathcal{C}}
\newcommand{\D}{\mathcal{D}}
\newcommand{\T}{\mathcal{T}}
\newcommand{\ot}{\otimes}
\newcommand{\mult}{\times}
\newcommand{\til}{\tilde}
\newcommand{\con}{\backsim}
\newcommand{\one}{\boldsymbol{1}}
\newcommand{\ti}{\widetilde}
\newcommand{\Ga}{\Gamma}
\newcommand{\A}{{\mathcal{A}}}
\newcommand{\B}{{\mathcal{B}}}
\newcommand{\ra}{\rightarrow}
\newcommand{\wb}{\overline}
\renewcommand{\l}{\ell}
\newcommand{\X}{\mathcal{X}}
\begin{document}
\title[Descent via categories]{Descent, fields of invariants, and generic forms via symmetric monoidal categories}
\author{Ehud Meir}
\address{E. Meir: Department of Mathematical Sciences, University of Copenhagen, Universitetsparken 5, DK-2100, Denmark}
\email{meirehud@gmail.com}

\begin{abstract}
Let $W$ be a finite dimensional algebraic structure (e.g. an algebra) over a field $K$ of characteristic zero.
We study forms of $W$ by using Deligne's Theory of symmetric monoidal categories.
We construct a category $\C_W$, which gives rise to a subfield $K_0\subseteq K$, which we call the field of invariants of $W$.
This field will be contained in any subfield of $K$ over which $W$ has a form.
The category $\C_W$ is a $K_0$-form of $Rep_{\bar{K}}(Aut(W))$, and we use it to construct a generic form $\ti{W}$ over a commutative $K_0$-algebra $B_W$ 
(so that forms of $W$ are exactly the specializations of $\ti{W}$).
This generalizes some generic constructions for central simple algebras and for $H$-comodule algebras. 
We give some concrete examples arising from associative algebras and $H$-comodule algebras.
As an application, we also explain how one can use the construction to classify two-cocycles on some finite dimensional Hopf algebras.
\end{abstract}
\maketitle
\begin{section}{Introduction}\label{introduction}
Let $W$ be a finite dimensional algebraic structure (e.g. an algebra, a Hopf algebra, a comodule algebra, a module over a given algebra et cetera) 
defined over a field $K$ of characteristic zero (for the clarity of the exposition we will assume that $K$ is algebraically closed).
In this paper we will discuss the following questions:\\
1. Over what subfields $K_1$ of $K$ can $W$ be defined (i.e. over what subfields of $K$ does $W$ have a \textit{form})?\\
2. In case $W$ can be defined over a subfield $K_1$ of $K$, what are all the forms of $W$ over this subfield?\\
3. What scalar invariants does the isomorphism type of $W$ have?\\
These questions are very hard in general. 
For example, the forms of $M_n(K)$ are all the central simple algebras of dimension $n^2$ over some subfield of $K$.

In this paper we will address these questions by using tools from the theory of symmetric monoidal categories,
and by using a theorem of Deligne, which says that every symmetric monoidal category of a specific type is the representation category of an algebraic group.
We will construct an abelian rigid symmetric monoidal category $\C_W$ (which we shall call the fundamental category of $W$)
which contains a structure $\wb{W}$ of the same type as $W$, 
and an exact faithful symmetric monoidal functor $F:\C_W\ra Vec_K$ such that $F(\wb{W})=W$. 
This category will be defined over a subfield $K_0$ of $K$.
In a sense which will be explained later, $\C_W$ will be the symmetric rigid monoidal category ``generated by $W$'' and
the field $K_0$ will be the ``field of invariants'' of $W$.
We will prove in Section \ref{construction} that $(\C_W,\wb{W},F)$ satisfies the following universal property:
\begin{theorem}\label{universalproperty}
Let $K\subseteq T$ be an extension field, let $\A$ be an abelian symmetric monoidal rigid category, and let $G:\A\ra Vec_T$ 
be an exact additive faithful symmetric monoidal functor.
Assume that there is a structure $Z$ in $\A$ such that 
the structures $G(Z)$ and $W\ot_K T$ are isomorphic.
Then there exists a unique (up to isomorphism) exact faithful symmetric monoidal functor $\ti{F}:\C_W\ra \A$ 
such that $\ti{F}(\wb{W}) = Z$ (where equality here means equality of structures), 
and such that $G\ti{F}\cong i_{K,T}F$ where $i_{K,T}:Vec_K\ra Vec_T$ is the extension of scalars functor. 
\end{theorem}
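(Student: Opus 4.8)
The plan is to exhibit $\C_W$ concretely enough that the universal functor $\ti F$ can be written down on generators and relations, and then to verify that the required equations force it to be unique. Since the paper promises a construction of $\C_W$ in Section~\ref{construction}, I would assume that $\C_W$ is presented as (the abelian/Karoubi/rigid envelope of) a free-type symmetric monoidal category on the object $\wb W$ together with the structure morphisms that encode ``being a structure of the same type as $W$'' (e.g.\ a multiplication $\wb W\ot\wb W\ra\wb W$, a unit, antipode, comodule map, etc., subject to the axioms of that type), quotiented by exactly those relations among iterated tensor/composition of these morphisms that hold after applying $F$ and identifying $F(\wb W)=W$. In other words, $\Hom_{\C_W}$ is a quotient of a space of abstract ``string diagrams'' built from the structure maps of $W$, with two diagrams identified iff their images in $\mathrm{Vec}_K$ agree. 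Granting such a presentation, the proof of the universal property is essentially forced.

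First I would define $\ti F$ on the generating object by $\ti F(\wb W)=Z$, and on the generating structure morphisms by sending each structure map of $\wb W$ to the corresponding structure map of $Z$ (which exists because $Z$ is a structure of the same type as $W$ in $\A$). This extends uniquely to all of the free symmetric monoidal category by functoriality and monoidality. The key step is then to check that $\ti F$ descends to the quotient $\C_W$, i.e.\ that every relation imposed in $\C_W$ is sent to a valid equality in $\A$. Here is where faithfulness of $G$ does the work: a relation in $\C_W$ holds by definition because its two sides become equal morphisms in $\mathrm{Vec}_K$ after applying $F$; applying $i_{K,T}$ they remain equal in $\mathrm{Vec}_T$; and since $G\ti F\cong i_{K,T}F$ compatibly with all the structure (the isomorphism $G(Z)\cong W\ot_K T$ being an isomorphism \emph{of structures}), the two sides of the relation become equal after applying $G\ti F$; finally, $G$ faithful forces them equal already in $\A$. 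Exactness and additivity of $\ti F$ then come for free once one passes through the abelian envelope, using that $\A$ is abelian and $\ti F$ is defined compatibly with the universal property of that envelope (a standard fact about Deligne-type categories: an exact monoidal functor out of $\C_W$ is the same as a monoidal functor out of the underlying combinatorial category landing in an abelian rigid category).

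For uniqueness, suppose $\ti F'$ is another exact faithful symmetric monoidal functor with $\ti F'(\wb W)=Z$ and $G\ti F'\cong i_{K,T}F$. On the generating object the two agree by hypothesis. On the generating structure morphisms, $\ti F'$ must send each one into the space of morphisms in $\A$ of the appropriate tensor-degree; both $\ti F(m)$ and $\ti F'(m)$ map to the same morphism under $G$ (namely the image of $F(m)$ under $i_{K,T}$, transported along the fixed isomorphism $G(Z)\cong W\ot_K T$), so by faithfulness of $G$ they coincide. Hence $\ti F$ and $\ti F'$ agree on generators, and since both are symmetric monoidal they agree on all of $\C_W$; a routine check promotes this to a monoidal natural isomorphism, and one argues that it is unique. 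The compatibility $G\ti F\cong i_{K,T}F$ is then recorded by construction.

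I expect the main obstacle to be precisely the passage from the combinatorial/free description to the \emph{abelian rigid} category $\C_W$: one must make sure that $\ti F$, which is naturally defined on string diagrams, genuinely extends to an exact functor on $\C_W$, i.e.\ that subquotients and the rigid duals introduced in forming $\C_W$ are sent by $\ti F$ to the corresponding subquotients and duals in $\A$, and that no new relations in the abelian envelope obstruct well-definedness. This is handled by the universal property of Deligne's construction of the abelian envelope of a symmetric monoidal category (an exact symmetric monoidal functor to an abelian rigid tensor category factors essentially uniquely through the envelope), which I would invoke rather than reprove; the only genuinely $W$-specific input is the faithfulness-of-$G$ argument above showing the structure-specific relations are preserved. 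A minor secondary point is to check that ``equality of structures'' $\ti F(\wb W)=Z$ (as opposed to mere isomorphism) can be arranged — this is a matter of choosing the underlying object on the nose and is compatible with the strictification used in building $\C_W$.
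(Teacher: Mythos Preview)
Your overall strategy matches the paper's: define $\ti F$ on the ``free'' layer by sending $\wb W\mapsto Z$ and the structure tensors of $\wb W$ to those of $Z$, use faithfulness of $G$ to verify that the relations imposed in $\C_W$ are respected, and then extend through the abelian completion. The argument you give for the combinatorial part and for uniqueness is essentially what the paper does, packaged there via Lemma~\ref{functorsdiagram}.

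The gap is in the clause ``I would invoke rather than reprove'' the universal property of the abelian envelope. There is no off-the-shelf envelope with the needed universal property here. The paper builds $\C_W$ not as a standard abelian or Karoubian envelope but as an iterated \emph{kernel completion} $\C_{n+1}=Ker(\C_n)_{F_n}$ (Section~\ref{sec:ker-comp}), a construction that depends essentially on the ambient functor $F_n:\C_n\to Vec_K$; the paper explicitly remarks that this differs from the envelope constructions of Beligiannis and Krause. The universal property you need---that a faithful symmetric monoidal functor $\C_n\to\A$ with $\A$ abelian rigid extends uniquely to $\C_{n+1}$---is exactly Theorem~\ref{ker-comp}(3), and proving it is most of the work in Section~\ref{sec:ker-comp}. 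The subtlety is that morphisms in $Ker(\C_n)$ are not merely string diagrams modulo relations: they are generated by induced maps between kernels coming from good sequences, closed under composition \emph{and under inverting maps that become isomorphisms after applying $F$} (Definition~\ref{def:homsets}). One must verify that each of these three generation rules lifts to $\A$, using that $\A$ is abelian and $G$ exact and faithful; this is where the actual argument lives. So the step you correctly flagged as the main obstacle is in fact the main content of the proof, and it cannot be outsourced to a citation.
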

We will then use Deligne's Theory on symmetric monoidal categories to study the category $\C_W$.
In Section \ref{categoryform} we will prove the following result:
\begin{theorem}\label{main1}
The category $\C_W$ is a $K_0$-form of the $K$-linear category $Rep_K-G$, where $G$ is the algebraic group of automorphisms of $W$.
For any subfield $K_0\subseteq K_1\subseteq K$,
we have a one-to-one correspondence between forms of $W$ over $K_1$ and isomorphism classes of fiber functors $F:\C_W\otimes_{K_0} K_1\rightarrow  Vec_{K_1}$.
\end{theorem}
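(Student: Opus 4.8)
\emph{Proof strategy.} The first assertion is obtained by applying Deligne's Tannakian reconstruction to the base change $\C_W\ot_{K_0}K$. By construction $\C_W$ is an abelian $K_0$-linear rigid symmetric monoidal category with $\mathrm{End}_{\C_W}(\one)=K_0$ that carries the fiber functor $F$, so $\C_W\ot_{K_0}K$ is an abelian $K$-linear rigid symmetric monoidal category with $\mathrm{End}(\one)=K$ admitting the fiber functor $F_K:=F\ot_{K_0}K$ to $Vec_K$; in particular it is a neutral Tannakian category over $K$, hence equivalent as a symmetric monoidal $K$-linear category to $Rep_K-H$ with $H=\underline{\mathrm{Aut}}^{\ot}(F_K)$. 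The heart of the matter is the identification of the affine group scheme $H$ with $G=Aut(W)$. Given a commutative $K$-algebra $R$, a monoidal natural automorphism $\eta$ of $F_K\ot_K R$ is, by monoidality and by naturality against the structure morphisms of $\wb{W}$ (the multiplication, the unit, the coaction, \dots, all of which are morphisms of $\C_W$), constrained so that its component $\eta_{\wb{W}}$ is an automorphism of $W\ot_K R$ as a structure of the given type; this yields a homomorphism $H\ra G$. Conversely, the construction of $\C_W$ in Section \ref{construction} shows that every object of $\C_W$ is a subquotient of a finite direct sum of objects $\wb{W}^{\ot a}\ot(\wb{W}^{\,*})^{\ot b}$ and that every morphism of $\C_W$ is a composite of the structure morphisms of $\wb{W}$ with morphisms coming from the rigid and symmetric structure; hence an element of $G(R)$ acts compatibly on all such objects, commutes with all morphisms, and therefore determines a monoidal natural automorphism of $F_K\ot_K R$. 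These two constructions are mutually inverse, so $H\cong G$ and $\C_W\ot_{K_0}K\cong Rep_K-G$; that is, $\C_W$ is a $K_0$-form of $Rep_K-G$.

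For the correspondence, fix $K_0\subseteq K_1\subseteq K$ and set $\C_1:=\C_W\ot_{K_0}K_1$. To a fiber functor $\omega\colon\C_1\ra Vec_{K_1}$ attach the structure $W_\omega:=\omega(\wb{W}\ot_{K_0}K_1)$, with structure morphisms the images under $\omega$ of those of $\wb{W}$; it is a structure of the same type as $W$ over $K_1$. Base-changing along $K_1\subseteq K$ produces a fiber functor $\omega\ot_{K_1}K$ on $\C_1\ot_{K_1}K\cong\C_W\ot_{K_0}K\cong Rep_K-G$; since $K$ is algebraically closed and $G$ is an affine algebraic group, every $G$-torsor over $K$ is trivial, so $\omega\ot_{K_1}K$ is isomorphic to the canonical fiber functor $F_K$, which sends (the image of) $\wb{W}$ to $W$. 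Evaluating at $\wb{W}$ therefore gives $W_\omega\ot_{K_1}K\cong W$, so $W_\omega$ is a form of $W$ over $K_1$; and isomorphic fiber functors visibly produce isomorphic forms, so $\omega\mapsto W_\omega$ descends to isomorphism classes.

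For the reverse assignment, let $W_1$ be a form of $W$ over $K_1$. The plan is to pass through the fundamental category $\C_{W_1}$ of $W_1$, with its tautological fiber functor $F_1\colon\C_{W_1}\ra Vec_{K_1}$, $F_1(\wb{W_1})=W_1$. One first checks that the field of invariants is a form-invariant, so that $\C_{W_1}$ is defined over the same field $K_0$, and then, applying the universal property of Theorem \ref{universalproperty} to $\C_W$ (with $T=K$, $\A$ the base change of $\C_{W_1}$ to $K$, and $Z=\wb{W_1}\ot_{K_1}K$, which is legitimate because $W_1\ot_{K_1}K\cong W$) and symmetrically to $\C_{W_1}$, one obtains functors that, by the uniqueness clauses, become mutually inverse equivalences after base change to $K$; faithfully flat descent along $K_0\hookrightarrow K$ then yields an equivalence $\C_{W_1}\cong\C_W$ over $K_0$ carrying $\wb{W_1}$ to $\wb{W}$. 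Transporting $F_1$ across it and extending scalars to $K_1$ gives a fiber functor $\omega_{W_1}\colon\C_1\ra Vec_{K_1}$ with $\omega_{W_1}(\wb{W}\ot_{K_0}K_1)\cong W_1$. Finally the two assignments are mutually inverse: $W_{\omega_{W_1}}\cong W_1$ by construction, while for a given $\omega$ the triple $(\C_W,\wb{W},\omega)$ satisfies the universal property characterizing the fundamental category of $W_\omega$ (as $\C_W$ is generated by $\wb{W}$), whence $\omega_{W_\omega}\cong\omega$. I expect the main obstacle to be precisely this reverse direction: checking that the field of invariants does not change under passing to a form, and carrying out the base-change-and-descent bookkeeping needed to identify $\C_{W_1}$ with $\C_W$ over $K_0$; once that is settled the remaining ingredients --- Deligne's theorem and the triviality of torsors under $G$ over the algebraically closed field $K$ --- are routine.
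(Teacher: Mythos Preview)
Your proposal is correct and follows essentially the same route as the paper: Tannakian reconstruction for $\C_W\ot_{K_0}K$, identification of $\underline{\mathrm{Aut}}^{\ot}(F_K)$ with $G$ via the fact that $\wb{W}$ tensor-generates $\C_W$, uniqueness of fiber functors over the algebraically closed field $K$ for one direction of the correspondence, and the equivalence $\C_{W_1}\simeq\C_W$ for the other.

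The one place where you take a longer road than necessary is the form $\to$ fiber functor direction. You apply Theorem~\ref{universalproperty} with $\A$ equal to the \emph{base change} $\C_{W_1}\ot_{K_0}K$, obtain equivalences only after extending to $K$, and then invoke faithfully flat descent along $K_0\hookrightarrow K$ to get $\C_{W_1}\simeq\C_W$ over $K_0$. The paper avoids this entirely (see Lemma~\ref{extension} and Corollary~\ref{descent}): in Theorem~\ref{universalproperty} one may take $\A=\C_{W_1}$ itself, with the fiber functor $i_{K_1,K}\circ F_{W_1}:\C_{W_1}\to Vec_K$; since $i_{K_1,K}F_{W_1}(\wb{W_1})=W_1\ot_{K_1}K\cong W$, the universal property hands you $\C_W\to\C_{W_1}$ directly, and the symmetric application plus the uniqueness clause give the inverse. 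No descent for categories is needed, and the form-invariance of $K_0$ that you flagged as the main obstacle falls out immediately (Corollary~\ref{descent}). Your approach would work, but descent for tensor categories is a heavier tool than the situation requires.
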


Theorem \ref{main1} gives us a description of forms of $W$ in terms of fiber functors on the category $\C_W$ 
(by a fiber functor we mean here an exact additive symmetric monoidal functor, see Section \ref{prel}).
However, we would like to have a more concrete answer to Questions 1 2 and 3 above.
In Section \ref{generic} we will use Deligne's Theory in order to construct a ``generic form'' of $W$, which specializes
to all forms of $W$, and only to forms of $W$.
More precisely, we will prove the following result:
\begin{theorem}\label{main2}
There exists a commutative $K_0$-algebra $B_W$
and a $B_W$-structure $\ti{W}$ of the same type as $W$ such that:\\
1. As a $B_W$-module, $\ti{W}$ is free of rank $dim_K W$.\\
2. If $\phi:B_W\rightarrow K_1$ is a homomorphism of $K_0$-algebras from $B_W$ to an extension field $K_1$ of $K_0$, 
then $W_{\phi}:=\ti{W}\ot_{B_W} K_1$ is a form of $W$ over $K_1$.\\
3. Every form of $W$ is of the form $W_{\phi}$ for some $\phi$.\\
4. The algebra $B_W$ has no zero divisors. If the group $G$ is reductive, then the algebra $B_W$ can be chosen to be finitely generated.
In this case, the structure $W$ will have a form over a finite extension of $K_0$. 
\end{theorem}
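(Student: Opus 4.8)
The strategy is to leverage Theorem \ref{main1} and Deligne's structure theory. By Theorem \ref{main1}, $\C_W$ is a $K_0$-form of $Rep_K$-$G$ for $G = Aut(W)$, and forms of $W$ over $K_1$ correspond to fiber functors $\C_W \otimes_{K_0} K_1 \to Vec_{K_1}$. So I need a "universal fiber functor" — one defined over some commutative $K_0$-algebra $B_W$ through which every fiber functor factors. The key idea: since $\C_W$ has a forgetful-type functor $F$ to $Vec_K$, and $\C_W \otimes_{K_0} K$ is tensor-equivalent to $Rep_K$-$G$, any other fiber functor differs from $F$ by a $G$-torsor; the "generic" torsor is the one represented by the coordinate ring of $G$ itself (or a suitable quotient), which carries the tautological torsor.

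Concretely, I would proceed as follows. First, fix a finite collection of objects of $\C_W$ that generate it as an abelian rigid tensor category — for instance $\wb{W}$ together with duals and subquotients needed to see all of $\C_W$; call the "regular" object $X$ (the image of the regular representation, or a tensor generator). For each such object, a fiber functor assigns a finite free module over the base together with compatible structure maps; encoding "the data of a fiber functor on $\C_W$ valued in commutative $K_0$-algebras" as a functor on $K_0$-algebras, I would show this functor is representable by a commutative $K_0$-algebra $B_W$. The representing algebra is built by writing down generators — the matrix entries of $\til F(g)$ for each generating morphism $g$ of $\C_W$ on each generating object — and relations forcing compatibility with composition, tensor, and the symmetry, exactly as one builds the coordinate ring of a scheme of tensor-functor data. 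The universal element of $B_W$-valued points then gives the $B_W$-structure $\ti W$, free of rank $\dim_K W$ by construction (item 1), and items 2 and 3 are immediate from Theorem \ref{main1} applied to the base-changed fiber functor $\C_W \otimes_{K_0} K_1 \to Vec_{K_1}$ obtained from a homomorphism $\phi: B_W \to K_1$.

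For item 4, the absence of zero divisors should follow from the fact that $\C_W \otimes_{K_0} \bar K_0 \cong Rep_{\bar K_0}$-$G$ and $G$ is a connected (or at least geometrically integral after suitable normalization) group scheme acting, so that $B_W \otimes_{K_0} \bar K_0$ is a localization of, or a ring closely tied to, $\mathcal O(G)$ — I would identify $\operatorname{Spec} B_W$ with the scheme of fiber functors, which over $\bar K_0$ is a $G$-torsor, hence geometrically integral, giving that $B_W$ is a domain. When $G$ is reductive, I would invoke finite generation of rings of invariants (Haboush/Hilbert–Nagata type results) to cut $B_W$ down to a finitely generated $K_0$-algebra while retaining the universal property; and finite generation plus the Nullstellensatz yields a $K_0$-point over a finite extension, i.e. a form of $W$ over a finite extension of $K_0$.

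I expect the main obstacle to be making precise the claim that fiber functors on the infinite category $\C_W$ are governed by a \emph{finitely presented} (or at least Noetherian, domain) algebra: one must choose a finite generating set of objects \emph{and} morphisms for $\C_W$ as a tensor category, control the relations, and verify that imposing exactness/faithfulness does not require passing to a non-finitely-generated localization in general — this is exactly where reductivity of $G$ enters, and handling the non-reductive case (where one only gets a possibly non-finitely-generated domain $B_W$) requires care to ensure $\operatorname{Spec} B_W$ is still nonempty and integral. A secondary technical point is checking that the base change $\C_W \otimes_{K_0} B_W$ and the functor to $Vec_{B_W}$ behave well with respect to the (non-field) base $B_W$, so that Theorem \ref{main1}'s correspondence extends from fields $K_1$ to the universal algebra — this should follow from the abelian/rigid structure surviving flat base change, but it needs to be spelled out.
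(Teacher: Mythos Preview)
Your overall strategy---constructing a universal fiber functor---matches the paper's goal, but the implementation differs substantially and contains a real error in the integrality argument.

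The paper does not attempt to represent the functor of fiber functors externally by writing down generators and relations. Instead, following Deligne, it works \emph{internally} in $Ind(\C_W)$: it constructs a commutative algebra object $A$ in $Ind(\C_W)$ (explicitly a tensor product of localizations of symmetric algebras, and in the reductive case simply $A = Sym(W^{\oplus n} \oplus (W^*)^{\oplus n})$ modulo relations forcing the two bases to be dual) such that $W \otimes A \cong A^n$ as $A$-modules in $\C_W$. Then $B_W := Hom_{\C_W}(\one, A)$ and $\ti W := Hom_{\C_W}(\one, W \otimes A)$. The trivialization $W \otimes A \cong A^n$ gives freeness of $\ti W$ immediately; exactness of the functor $X \mapsto Hom_{\C_W}(\one, X \otimes A)$ is arranged by ensuring every short exact sequence in $\C_W$ splits after tensoring with $A$ (automatic when $G$ is reductive, and in general handled by taking $A$ to be an infinite tensor product, exactly as in Deligne's original argument). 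That every fiber functor arises from a point of $B_W$ is proved by applying the given fiber functor to $A$ itself and choosing a $K_1$-point of the resulting localized polynomial algebra. Integrality is then a one-liner: $B_W \otimes_{K_0} K = F(B_W)$ is a \emph{subalgebra} of $F(A)$, and $F(A)$ is a localized polynomial ring over $K$, hence a domain.

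Your integrality argument has a genuine gap beyond the one you flag. You write that over $\bar K_0$, $\operatorname{Spec} B_W$ ``is a $G$-torsor, hence geometrically integral,'' and that $B_W \otimes \bar K_0$ is ``closely tied to $\mathcal O(G)$.'' Neither is correct: a $G$-torsor is the scheme of isomorphisms between two \emph{fixed} fiber functors, not the scheme parametrizing all fiber functors. Over $K$ the latter (once rigidified by a choice of basis) is an open piece of $GL_n/G$, not a torsor under $G$, and $B_W \otimes K$ is the ring of $G$-invariants $F(A)^G$, not $\mathcal O(G)$. When $G$ is not reductive, $GL_n/G$ need not even be affine, so extracting integrality this way is delicate. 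The paper's internal approach avoids this entirely: integrality comes from the embedding $B_W \otimes K \hookrightarrow F(A)$ into a visible domain, and finite generation in the reductive case from $B_W \otimes K = F(A)^G$ together with classical invariant theory.
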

Thus, $W$ will have a form over $K_1$ if and only if there exists a homomorphism $B_W\rightarrow K_1$,
and any form of $W$ over $K_1$ will be of the form $W_{\phi}$ for some homomorphism $\phi:B_W\rightarrow K_1$.

The field $K_0$ contains elements which must be contained in any field over which $W$ has a form.
We will prove in Section \ref{sec:galois} the following characterization of $K_0$ in case the extension $K/K_0$ is Galois. 
\begin{theorem}\label{main4}
Assume that $K_0$ has a subfield $L$ such that $K/L$ is a Galois extension (so in particular, $K/K_0$ is Galois).
Let $\Ga:=Gal(K/L)$.
The group $\Ga$ acts on the set of isomorphism classes of structures of the same type as $W$ of dimension $dim(W)$
(this is just the Galois action on the structure constants).
Denote by $H<\Ga$ the stabilizer of the isomorphism class of $W$. Then $K^H=K_0$.
\end{theorem}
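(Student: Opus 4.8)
The plan is to prove the two inclusions $K^H\subseteq K_0$ and $K_0\subseteq K^H$ separately. Throughout I use that $K/K_0$ is Galois (it is intermediate in the Galois extension $K/L$), so that $K^{Gal(K/K_0)}=K_0$, and likewise for $K^H$.

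\textbf{The inclusion $K^H\subseteq K_0$.} It is enough to show $Gal(K/K_0)\leq H$, i.e. that $\sigma(W)\cong W$ for every $\sigma\in\Ga$ fixing $K_0$ pointwise. Fix such a $\sigma$. Since $\C_W$, and with it the object $\wb W$, is defined over $K_0$, the scalar-twisting action of $\sigma$ on $Vec_K$ together with the induced self-equivalence of $\C_W\ot_{K_0}K$ (which acts through the $K$-factor only, hence trivially on the image of $\C_W$, in particular fixing $\wb W$) carries $F$ to a new exact faithful symmetric monoidal functor $F^{\sigma}\colon\C_W\ot_{K_0}K\ra Vec_K$ with $F^{\sigma}(\wb W)$ equal to a Galois conjugate of $W$, hence a member of the $\Ga$-orbit of $W$. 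By the first assertion of Theorem \ref{main1}, $\C_W\ot_{K_0}K\cong Rep_K-G$; since $K$ is algebraically closed and $G$ is smooth (the base field having characteristic zero), every $G$-torsor over $K$ is trivial, so by Deligne's theory any two fiber functors $Rep_K-G\ra Vec_K$ are isomorphic as symmetric monoidal functors. Hence $F^{\sigma}\cong F$. Evaluating such an isomorphism at $\wb W$ yields an isomorphism $F^{\sigma}(\wb W)\cong F(\wb W)=W$ which, being the component of a \emph{monoidal} natural isomorphism, commutes with all the structure morphisms of $\wb W$; thus the Galois conjugate $F^{\sigma}(\wb W)$ is isomorphic to $W$ as a structure, so $\sigma\in H$. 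As $H$ is a subgroup, this gives $Gal(K/K_0)\leq H$, and therefore $K^H\subseteq K^{Gal(K/K_0)}=K_0$.

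\textbf{The inclusion $K_0\subseteq K^H$.} Here I would use the construction of $K_0$ from Section \ref{construction}: $K_0$ is the subfield of $K$ generated over $L$ by the scalar invariants of $W$, namely by the values in $K$ of the closed loops built from the structure morphisms of $W$, the evaluation and coevaluation maps of the dualities, and the symmetry constraints (such a loop being an endomorphism of the unit object, hence a scalar). Each such scalar $j(W)$ has two properties apparent from its construction: it depends only on the isomorphism class of $W$, and it is Galois-equivariant, $j(\sigma(W))=\sigma\bigl(j(W)\bigr)$, since the recipe producing it makes no reference to the $L$-structure. Hence for $\sigma\in H$ we get $\sigma\bigl(j(W)\bigr)=j(\sigma(W))=j(W)$, using equivariance and then $\sigma(W)\cong W$. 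So every generator of $K_0$ lies in $K^H$, whence $K_0\subseteq K^H$, and combining the two inclusions, $K^H=K_0$.

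\textbf{Where the work is.} The delicate points are two. In the inclusion $K^H\subseteq K_0$, one must be careful that the isomorphism of fiber functors furnished by Deligne's theory is a monoidal one: it is precisely monoidality that promotes ``$\sigma(W)\cong W$ as $K$-vector spaces'' to ``$\sigma(W)\cong W$ as structures of the same type as $W$''. In the inclusion $K_0\subseteq K^H$, one must know that the construction of $\C_W$ and of $K_0$ in Section \ref{construction} is functorial for field automorphisms of $K$ fixing $L$; this is exactly what legitimizes the equivariance $j(\sigma(W))=\sigma(j(W))$ and the identification of the invariants of $\sigma(W)$ with those of $W$. It is worth stressing that the argument compares fields only: it does \emph{not} assert that $W$ has a form over $K_0=K^H$ (which in general is obstructed), and it deliberately avoids descending the structure $W$ itself. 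Finally, if one also wants the sharper statement $H=Gal(K/K_0)$, one adds the observation that $H$ is closed in the Krull topology, since $W$ is already defined over a finite subextension of $K/L$.
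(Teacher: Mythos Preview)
Your proof is correct and follows essentially the same strategy as the paper: both directions hinge on the same two facts---uniqueness of fiber functors on $\C_W\otimes_{K_0}K$ over the algebraically closed field $K$ for $K^H\subseteq K_0$, and Galois-equivariance of the scalar invariants generating $K_0$ for $K_0\subseteq K^H$. The only notable difference is packaging: for the second inclusion the paper tracks the effect of the equivalence $H_\gamma:\,{}^{\gamma}\C_W\to\C_W$ on $End(\one)$ and compares the resulting ring maps $\gamma(K_0)\to K$, whereas you argue directly from the explicit description of $K_0=X^{0,0}$ as values of closed string diagrams in the $x_i$; these are two phrasings of the same observation, yours being marginally more concrete.
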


The field $K_0$ also plays a role in studying polynomial identities of $W$. 
In Section \ref{identities} we will explain some connections between the category $\C_W$ and polynomial identities of $W$ 
(for this part only we need $W$ to be an algebra or an $H$-comodule algebra).
We will explain how polynomial identities can be understood in the context of our category $\C_W$, and we will prove the following theorem:
\begin{theorem}\label{main5}
Let $W$ be an algebra or an $H$-comodule algebra of finite dimension over $K$, where $H$ is a finite dimensional Hopf algebra over a subfield $k\subseteq K$. 
Then all the identities of $W$ are already defined over $K_0$ (In case $W$ is an $H$-comodule algebra, we will necessarily have that $k\subseteq K_0$).
\end{theorem}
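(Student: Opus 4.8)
The plan is to translate the notion of "polynomial identity of $W$" into a statement about morphisms in the fundamental category $\C_W$, and then exploit the fact that $\C_W$ is defined over $K_0$. The starting observation is that an identity of the algebra (or $H$-comodule algebra) $W$ of degree $n$ is, by multilinearization, equivalent to a finite set of multilinear identities, and a multilinear identity of degree $n$ can be encoded as an element of the space $\mathrm{Hom}_K(W^{\otimes n}, W)$ (or, using rigidity, of $\mathrm{Hom}_K(W^{\otimes n}\otimes W^*, \mathbf{1})$) that lies in the kernel of the natural evaluation map into the multilinear functions on $W$. The key point is that the multiplication map $\wb{W}\otimes\wb{W}\to\wb{W}$ lives in $\C_W$ (this is part of what it means for $\wb{W}$ to be a structure of the same type as $W$), and hence so does every iterated composition of it; in the $H$-comodule algebra case the comodule structure map is also a morphism in $\C_W$, using that $H$ is already defined over $k\subseteq K_0$.

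The main steps, in order, are as follows. First, I would make precise the claim that for each $n$ the space of multilinear maps $W^{\otimes n}\to W$ built from the structure maps of $W$ is the image under the fiber functor $F$ of the corresponding $\mathrm{Hom}$-space in $\C_W$; concretely, the submodule of $\mathrm{Hom}_K(W^{\otimes n},W)$ spanned by the "words" in the structure operations equals $F$ applied to the analogous submodule $M_n\subseteq\mathrm{Hom}_{\C_W}(\wb{W}^{\otimes n},\wb{W})$, and $F$ restricted there is an isomorphism of $K$-vector spaces because $F$ is faithful and exact. Second, I would observe that a multilinear polynomial $p$ of degree $n$ is an identity of $W$ precisely when the element of $\mathrm{Hom}_K(W^{\otimes n},W)$ that $p$ defines is zero, i.e.\ when the corresponding element of $F(M_n)$ vanishes; by the isomorphism of the previous step this happens if and only if the corresponding element of $M_n$ — which is a $K_0$-structure, being a $\mathrm{Hom}$-space in the $K_0$-linear category $\C_W$ — becomes zero after extension of scalars to $K$. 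Third, since $M_n\otimes_{K_0}K\cong F(M_n)$ and an element of $M_n\otimes_{K_0}K$ is zero iff its coordinates with respect to a $K_0$-basis of $M_n$ all vanish, the set of coefficient vectors of multilinear identities of degree $n$ is cut out by linear equations with coefficients in $K_0$; equivalently, the space of multilinear identities has a spanning set defined over $K_0$. Finally, passing back from multilinear identities to arbitrary identities via the standard (characteristic zero) multilinearization and restitution process, which only involves rational operations on coefficients, shows that the whole T-ideal of identities of $W$ is generated by elements with coefficients in $K_0$, which is the assertion.

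I would organize this so that the technical heart is the identification in the first step: one must check that the span of words in the structure operations of $W$ inside $\mathrm{Hom}_K(W^{\otimes n},W)$ is exactly $F(M_n)$ where $M_n$ is the span of the corresponding words in $\C_W$, and that the relations among such words over $K$ descend to relations over $K_0$. This is essentially built into the construction of $\C_W$ (it is the category "generated by $W$"), so the argument is really a bookkeeping translation, but care is needed to handle the $H$-comodule algebra case — there one uses that the coaction $\wb{W}\to\wb{W}\otimes H$ is a morphism of $\C_W$ and that $H$, being defined over $k$, contributes no new scalars, forcing $k\subseteq K_0$ as noted in the statement. The main obstacle I anticipate is making the first step fully rigorous without circularity, i.e.\ pinning down precisely which $\mathrm{Hom}$-spaces in $\C_W$ correspond to spaces of words in the structure operations, and verifying that $F$ induces the expected isomorphism after base change; once that dictionary is set up, the descent of identities to $K_0$ is immediate.
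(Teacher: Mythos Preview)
Your approach is essentially the paper's: translate multilinear identities into $K$-linear dependence relations among a fixed family of morphisms in $\C_W$ --- the paper writes these explicitly as $\{\sigma\cdot m^{n-1}:\sigma\in S_n\}$ for algebras and $\{t_i\cdot m^{n-1}\}$ with $t_i$ ranging over a $k$-basis of $((H^*)^{\mathrm{op}})^{\otimes n}*S_n$ for $H$-comodule algebras --- and then argue that such relations descend to $K_0$. The descent step you correctly flag as the ``main obstacle'' (that $M_n\otimes_{K_0}K\hookrightarrow\mathrm{Hom}_K(W^{\otimes n},W)$) is isolated in the paper as Lemma~\ref{foridentities}, proved by an exterior-power argument using that $F$ is symmetric monoidal and faithful; note that faithfulness and exactness of $F$ alone, which you invoke earlier, do not suffice for injectivity \emph{after base change}.
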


Finally, we will give some concrete examples in Sections \ref{examples4}-\ref{examples2}.
In Section \ref{examples0} we will consider a three dimensional associative algebra.
We will describe $K_0$ in that case, and we will show that all the identities are already defined over a proper subfield of $K_0$.

In Section \ref{examples1} we will consider the case where $W$ is the algebra $M_n(K)$.
In this case $K_0=\Q$ and we will see that we can choose $B_W$ to be a localization of $K[M_n(K)\times M_n(K)]^{PGL_n}$,
where the action of $PGL_n$ is by conjugation. This construction is not new.
Amitsur was the first to construct a generic division algebra $R$ by using the polynomial identities of $M_n(K)$. 
Later, Procesi gave an alternative description of this algebra as the subalgebra of $M_n(K[x_{ij}^l])$ generated by generic matrices.
Procesi introduced also the algebra $\bar{R}$, which is formed by joining to $R$ the traces of all elements in $R$ (which are central in $R$).
The reader is referred to the paper \cite{Formanek} by Formanek for a survey on this subject.
The generic form we get here will be a localization of $\bar{R}$ by a central element.

In Section \ref{examples2} we will study in detail the case of a twisted Hopf algebra. 
More precisely, let $H$ be a Hopf algebra defined over a subfield $k\subseteq K$, let $\alpha:H\ot H\rightarrow K$ be a convolution invertible two-cocycle, 
and let $W=\, ^{\alpha}H$ be the resulting twisted algebra.
Then $W$ is an $H$-comodule algebra (the structure we will consider for $W$ here will be the multiplication in $W$ and the action of $H^*$ on $W$).
Such $H$-comodule algebras can be thought of as the noncommutative analogue of a principal fiber bundle, where the group $G$ is replaced by the Hopf algebra $H$.
(see e.g. \cite{Schneider}). They also coincide with the class of cleft Hopf Galois extensions of the ground field.
In \cite{Alkass} Aljadeff and Kassel studied such comodule algebras for general Hopf algebras (not necessarily finite dimensional ones) by means of polynomial $H$-identities.
They have constructed an algebra $\A^{\alpha}_H$,
which is formed by taking the most general two-cocycle cohomologous to $\alpha$. 
This algebra can be seen as a Hopf Galois extension of the commutative subalgebra $\B^{\alpha}_H:=(\A^{\alpha}_H)^{coH}$. 
They have shown that if $^{\beta}H$ is a form of $^{\alpha}H$ over an extension $L$ of $k$, then there exists a homomorphism $\phi:\B^{\alpha}_H\rightarrow L$ such that 
$\A^{\alpha}_H\ot_{\B^{\alpha}_H} L\cong\, ^{\beta}H$, 
and that every homomorphism $\B^{\alpha}_H\rightarrow L$ will give rise to a form of $^{\alpha}H$ if a certain integrality condition holds.
In \cite{Kasmas} this integrality condition was proved for finite dimensional Hopf algebras (among other cases). 
Thus, $\A^{\alpha}_H$ is a generic form of $^{\alpha}H$. 
A generic form for a twisted group algebra was constructed in \cite{AHN}. 
The nature of our construction of the generic form is different from that of $\A^{\alpha}_H$.
The main difference is the fact that here we will concentrate more on the algebra structure and the action of $H^*$ than on the fact that this algebra arises from a two-cocycle.
For a twisted group algebra, the construction we will give here will be very similar to the one which appears in \cite{AHN}. 
The main difference is that our base ring will have a smaller Krull dimension.
We will discuss the generic construction for group algebras, Taft algebras and products of Taft algebras.
In \cite{Iyerkas} Iyer and Kassel have studied the base ring $\B^{\alpha}_H=(\A^{\alpha}_H)^{coH}$ in case $\alpha$ is trivial for several families of Hopf algebras, 
including the Taft algebras.
Our construction will give us a base ring of smaller Krull dimension.
We will also show how the construction of the fundamental category can help us to classify all cocycles on $H$, where $H$ is a Taft algebra or a product of Taft algebras.

This paper is organized as follows:
In Section \ref{prel} we give some preliminaries about structures and monoidal categories.
In Section \ref{construction} we will construct the category $\C_W$, based on the construction of kernel completions presented in Section \ref{sec:ker-comp}. 
In Section \ref{properties} we will describe the way the category $\C_W$ behaves with respect to field extensions. 
We will show that all forms of $W$ give rise to equivalent categories. 
In Section \ref{cisform} we will prove Theorem \ref{main1},
and in Section \ref{generic} we will construct the generic form and prove Theorem \ref{main2}.
In Section \ref{sec:galois} we will explain the connection of our construction to classical descent theory and we will prove Theorem \ref{main4}.
In Section \ref{identities} we will explain the relation with polynomial identities and prove Theorem \ref{main5}.
Finally, we will give examples in Sections \ref{examples4}- \ref{examples2}.
\end{section}

\begin{section}{Preliminaries}\label{prel}
\begin{subsection}{Monoidal categories}
We will recall here some facts about monoidal categories.
For more detailed discussion the reader is referred to Chapter VII of \cite{maclane}, Chapter XI of \cite{Kassel} 
and to the papers  \cite{Deligne-tensor}, \cite{Deligne-tannaka} and \cite{Deligne-Milne}.
A \textit{monoidal category} $\C$ is a category equipped with a product (which we shall always denote by $\ot$)
$$\ot:\C\times \C\rightarrow \C.$$ The category contains a unit object $\one$ with respect to that multiplication,
and we have, for every $X,Y,Z\in Ob\C$ functorial isomorphisms: $$\lambda_X:\one\ot X\rightarrow X,$$ 
$$\rho_X:X\ot\one\rightarrow X$$ $$\textrm{and } \alpha_{X,Y,Z}:(X\ot Y)\ot Z\rightarrow X\ot (Y\ot Z).$$ 
These functorial isomorphisms should satisfy $\rho_{\one}=\lambda_{\one}$ and should make the following two diagrams commute: \\
$\xymatrix{ & & & (X\ot\one)\ot Y\ar[r]\ar[d] & X\ot(\one\ot Y)\ar[d] \\ & & & X\ot Y\ar[r]^{=} & X\ot Y}$\\
$\xymatrix{ (X\ot (Y\ot Z))\ot W\ar[rr] & & X\ot((Y\ot Z)\ot W)\ar[d] \\
((X\ot Y)\ot Z)\ot W\ar[u]\ar[rd] & & X\ot(Y\ot (Z\ot W)) \\
 & (X\ot Y)\ot (Z\ot W)\ar[ru]}$
We will often omit the functorial isomorphism $\alpha$ in our definitions and computations.
This will do no harm, since by Mac Lane coherence the associativity isomorphisms can be inserted to every diagram in a unique way.
We can thus talk freely about the $n$-th fold tensor product $X^{\ot n}$ of an object $X$ in $\C$.
A monoidal category is called \textit{symmetric} if in addition we have a functorial isomorphism $c_{X,Y}:X\ot Y\rightarrow Y\ot X$ which satisfies:\\
1. $c_{Y,X}c_{X,Y}=Id_{X\ot Y}$ for every two objects $X$ and $Y$.\\
2. $(c_{X,Z}\ot Id_Y)(Id_X\ot c_{Y,Z}) = c_{X\ot Y,Z}$ for every three objects $X$, $Y$ and $Z$.\\
If $X$ is an object of a symmetric monoidal category $\C$, then a dual object of $X$ is an object $X^*$, equipped with two morphisms:
$ev_X:X^*\ot X\rightarrow \one$ and $coev_X:\one\rightarrow X\ot X^*$ such that the following two morphisms are the identity morphisms:
\begin{equation}\label{duality1}X\stackrel{coev_X\ot Id_X}{\rightarrow} X\ot X^*\ot X\stackrel{Id_X\ot ev_X}{\rightarrow} X\end{equation}
$$X^*\stackrel{Id_X\ot coev_X}{\rightarrow} X^*\ot X\ot X^*\stackrel{ev_X\ot Id_X}{\rightarrow} X^*.$$
We say that $\C$ is \textit{rigid} in case every object has a dual.
In this case duality extends in a natural way to a contravariant functor $^*:\C\ra\C$:
if $f:X\ra Y$ is a morphism in $\C$ then the dual morphism is given by the composition $Y^*\ra Y^*\ot X\ot X^*\ra Y^*\ot Y\ot X^*\ra X^*$.
In general monoidal categories we need to be more careful and define right and left duals.
Since the category $\C$ is symmetric, we can avoid this distinction:
by using the symmetry operations the left dual coincides with the right dual, and as a result we have a natural isomorphism $X\cong X^{**}$.
When $\C$ is rigid we have functorial isomorphisms for every $X,Y,Z\in Ob \C$:
$$Hom_{\C}(X^*\ot Y,Z)\cong Hom_{\C}(Y,X\ot Z)$$
$$Hom_{\C}(Y\ot X,Z)\cong Hom_{\C}(Y,Z\ot X^*).$$
Using these isomorphisms one can prove that if $X$ is an object of a rigid abelian symmetric monoidal category $\C$
then the functor $X\ot - : \C\ra \C$ is exact.

If $\C$ and $\D$ are two monoidal categories,
then a functor $F:\C\rightarrow \D$ is called \textit{monoidal} if it is equipped with functorial isomorphisms
$f_{X,Y}:F(X\ot Y)\rightarrow F(X)\ot F(Y)$ which are compatible with the associativity isomorphisms in $\C$ and in $\D$ 
(we refer to XI.4.1 in the book \cite{Kassel} for an exact definition).
If $\C$ and $\D$ are symmetric monoidal categories and $F:\C\rightarrow \D$ is a monoidal functor,
then $F$ is called \textit{symmetric} in case the following diagram commutes:\\
$\xymatrix{& & & F(X\ot Y)\ar[r]\ar[d] & F(Y\ot X)\ar[d] \\ 
	   & & & F(X)\ot F(Y)\ar[r] & F(Y)\ot F(X).}$\\
In other words, $F$ should ``translate'' the symmetry in $\C$ to the symmetry in $\D$.
Notice that if $\C$ and $\D$ are rigid, we will get an isomorphism $F(X^*)\cong F(X)^*$ without further restrictions on $F$.

If $K$ is any field, a \textit{$K$-linear category} is an abelian category in which all the homomorphism groups are $K$-vector spaces 
and in which the composition of morphisms is $K$-bilinear.
Our main focus in this paper will be on categories $\C$ which are $K$-linear symmetric rigid monoidal categories (for some field $K$).
We will further assume that $End_{\C}(\one)=K$.
The general example to keep in mind is the following:
let $G$ be an affine algebraic group over $K$. Then the category $\C=Rep_K-G$ of all rational finite dimensional representations of $G$
is such a category. Indeed, this category is abelian. If $V$ and $W$ are two $G$-representations then $V\ot W$ is also a representation,
by the diagonal action, and $V^*$ is a representation by the dual action: $g\cdot f = f(g^{-1}-)$ for $g\in G$ and $f\in V^*$.
The tensor identity $\one$ is the trivial one dimensional representation.
In fact, all the examples which we will encounter in this paper will be forms of $Rep_K-G$ (we will prove this in Section \ref{categoryform}).

If we take $G$ to be the trivial group we get the category $Vec_K$ of finite dimensional $K$-vector spaces.
An exact additive faithful symmetric monoidal functor $F:\C\rightarrow Vec_K$ (or more generally, $F:\C\rightarrow B-mod$ 
where $B$ is some commutative algebra) is also called a \textit{fiber functor}.
For example, if $\C=Rep_K-G$, then the forgetful functor (which ``forgets'' the action of the group $G$) $F:\C\rightarrow Vec_K$ is a fiber functor.
Fiber functors will play a central role in the sequel. 

If $\C$ is a $K$-linear rigid symmetric monoidal category, we can carry a lot of the constructions usually done in $Vec_K$ in $\C$.
For example, we can still talk about algebras inside $\C$: an algebra will be an object $A\in \C$ together with a morphism $m:A\ot A\rightarrow A$.
The algebra $A$ is said to be associative in case the two morphisms $m(m\ot 1),m(1\ot m):A\ot A\ot A\rightarrow A$ are equal.
It is said to be commutative if $m=m c_{A,A}$.
If $X$ is any object of $\C$, we can still construct the tensor algebra $T(X)$ and its maximal commutative quotient $Sym(X)$.
These algebras are formed as infinte direct sums of objects of $\C$.
Therefore, they will not necessarily be contained in $\C$, but in a bigger category, $Ind(\C)$ (See Section 2.2 of \cite{Deligne-tensor} for an exact definition.
This will not make much difference here).

If $A$ is an associative commutative algebra with unit in $\C$, we can form localizations of $A$ inside $\C$.
In the classical case, where $\C=Vec_K$, we take an element $f\in A$ and add the inverse of $f$.
We can think of $f$ as the morphism $\one\ra A$ which maps the 1-dimensional vector space $K$ to $Kf$.
In our setting, we will consider a morphisms $f:D\ra A$, where $D$ is an invertible object of $\C$ (so that $ev:D\ot D^*\rightarrow \one$ is an isomorphism).
We will think of such a morphism as an element of $A$.
We form the localization $A_f$ in the following way:
let $\ti{A}:=A\ot Sym(D^*)$ (the tensor product of two commutative algebras in $\C$ is again a commutative algebra). 
We have two maps $\one\rightarrow\ti{A}$. The first is the one that sends $\one $ to the identity of $\ti{A}$.
The second one is given as the composition
$$\one\stackrel{coev_D}{\rightarrow} D\ot D^*\stackrel{f\ot 1}{\rightarrow} A\ot D^*\subseteq \ti{A}.$$
We consider the difference of these two maps $\ti{f}:\one\rightarrow \ti{A}$ and we define $A_f:=\ti{A}/(\ti{f})$
where $(\ti{f})$ is the ideal generated by the image of $\ti{f}$.
For example, if $\C=Rep_K-G$, then $D$ is invertible if and only if its dimension as a vector space is one.
The image of $f$ will then be $f(D)=K\cdot \hat{f}$ for some $\hat{f}\in A$, and the localization $A_f$ will be the same as $A_{\hat{f}}$.
We will sometimes write $A_{\hat{f}}$ for $A_f$, where no confusion can arise.

A module $M$ over an associative algebra $A$ will be an object of $\C$ together with a morphism $m_M:A\ot M\rightarrow M$ such that the usual module axioms hold.
If $A$ is an associative commutative algebra in a $K$-linear symmetric monoidal category $\C$, 
then the category $A-mod$ of $A$-modules inside $\C$ is again a $K$-linear symmetric monoidal category.
The tensor product of two $A$-modules is given by $M\ot_A N:= Coker(g: M\ot A\ot N\rightarrow M\ot N)$, 
where $g$ is given by $m_M\ot 1_N - 1_M\ot m_N$ (we use here the fact that left and right modules are the same over a commutative algebra).
The objects $M\ot_A N$ is again an $A$-module. 

During the construction of the fundamental category $\C_W$ we will use a few times the following lemma:
\begin{lemma}\label{functorsdiagram}
Consider the following diagram of categories and functors:
$$\xymatrix{ & \B\ar[d]^G \\ \C\ar[r]^F & \A}$$
where the functor $G$ is faithful and the category $\C$ is small. 
Let $ob(H):ob\C\ra ob\B$ be a function and let $\phi_X:G(ob(H)(X)) \ra F(X)$ be a collection of isomorphisms for every object $X$ of $\C$.
Then there exists at most one functor $H:\C\ra \B$ such that $\phi$ induces an isomorphism of functors $GH\cong F$, and such that $H(X) = ob(H)(X)$ for every object $X$ in $\C$.
The functor $H$ exists if and only if for every two objects $X,Y$ in $\C$, the image of 
$Hom_{\B}(ob(H)(X),ob(H)(Y))\ra Hom_{\A}(G(ob(H)(X)),G(ob(H)(Y)))\cong Hom_{\A}(F(X),F(Y))$ contains the image of the map 
$Hom_{\C}(X,Y))\ra Hom_{\A}(F(X),F(Y))$.
The functor $H$ is then given explicitly on morphisms by $H(f) = G^{-1}(\phi_Y^{-1}F(f)\phi_X)$ where $f:X\ra Y$.
Moreover, if the functor $F$ is faithful, then $H$ is faithful as well.
 \end{lemma}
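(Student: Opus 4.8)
The plan is to verify all claims by a direct diagram chase, treating the two directions (uniqueness and existence) separately and then checking faithfulness at the end. First I would observe that since $G$ is faithful, the map $Hom_{\B}(ob(H)(X),ob(H)(Y))\ra Hom_{\A}(G(ob(H)(X)),G(ob(H)(Y)))$ is injective, and composing with the fixed isomorphism $Hom_{\A}(G(ob(H)(X)),G(ob(H)(Y)))\cong Hom_{\A}(F(X),F(Y))$ induced by the $\phi$'s gives an injection $\iota_{X,Y}:Hom_{\B}(ob(H)(X),ob(H)(Y))\hookrightarrow Hom_{\A}(F(X),F(Y))$. The condition ``$\phi$ induces an isomorphism $GH\cong F$'' unwinds, for a morphism $f:X\ra Y$, to the commutativity of the square relating $G(H(f))$, $\phi_X$, $\phi_Y$ and $F(f)$; that is, $\iota_{X,Y}(H(f)) = \phi_Y^{-1}F(f)\phi_X$ as an element of $Hom_{\A}(F(X),F(Y))$. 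Since $\iota_{X,Y}$ is injective, $H(f)$ is forced: it must be $G^{-1}(\phi_Y^{-1}F(f)\phi_X)$, interpreting $G^{-1}$ as the inverse of the bijection of $G$ onto its image. This proves uniqueness and also produces the explicit formula.

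For existence, I would \emph{define} $H$ on objects by $ob(H)$ and on morphisms by $H(f) := G^{-1}(\phi_Y^{-1}F(f)\phi_X)$, which makes sense precisely when $\phi_Y^{-1}F(f)\phi_X$ lies in the image of $\iota_{X,Y}$ for every $f:X\ra Y$; but $\phi_Y^{-1}F(f)\phi_X$ ranges over the image of $Hom_{\C}(X,Y)\ra Hom_{\A}(F(X),F(Y))$ (up to the identification by the $\phi$'s), so the stated containment condition is exactly what is needed, and it is clearly also necessary. It then remains to check that $H$ so defined is a functor: for identities, $H(Id_X) = G^{-1}(\phi_X^{-1}F(Id_X)\phi_X) = G^{-1}(Id_{G(ob(H)(X))}) = Id_{ob(H)(X)}$ using that $F$ and $G$ preserve identities; for composition, given $f:X\ra Y$ and $g:Y\ra Z$, one computes $H(g)H(f) = G^{-1}(\phi_Z^{-1}F(g)\phi_Y)G^{-1}(\phi_Y^{-1}F(f)\phi_X)$, and since $G$ is a functor the product of the two elements in $Hom_{\B}$ maps under $G$ to $(\phi_Z^{-1}F(g)\phi_Y)(\phi_Y^{-1}F(f)\phi_X) = \phi_Z^{-1}F(gf)\phi_X$, so by injectivity of $G$ on morphism sets this product equals $G^{-1}(\phi_Z^{-1}F(gf)\phi_X) = H(gf)$. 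The identity $GH\cong F$ via $\phi$ holds by construction of $H$ on morphisms.

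Finally, for the faithfulness claim: if $F$ is faithful and $H(f) = H(f')$ for $f,f':X\ra Y$, then applying $G$ gives $\phi_Y^{-1}F(f)\phi_X = \phi_Y^{-1}F(f')\phi_X$, hence $F(f) = F(f')$ since the $\phi$'s are isomorphisms, hence $f = f'$ by faithfulness of $F$. I do not anticipate a serious obstacle here; the only point requiring care is keeping the identification $Hom_{\A}(G(ob(H)(X)),G(ob(H)(Y)))\cong Hom_{\A}(F(X),F(Y))$ consistently oriented throughout (it is conjugation by $\phi_X,\phi_Y$), and making sure ``$G^{-1}$'' is only ever applied to elements already known to lie in the image of $G$ — which is guaranteed by the hypothesis in the existence direction and automatic in the uniqueness direction.
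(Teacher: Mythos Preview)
Your proposal is correct and follows essentially the same approach as the paper: both use faithfulness of $G$ to force uniqueness of $H(f)$ via the formula $H(f)=G^{-1}(\phi_Y^{-1}F(f)\phi_X)$, then check that the stated containment condition is exactly what is needed for this preimage to exist, with functoriality and faithfulness following from faithfulness of $G$ and $F$ respectively. You simply spell out the functoriality and faithfulness verifications that the paper leaves as one-line remarks; the one small slip is that the expression $\phi_Y^{-1}F(f)\phi_X$ lives in $Hom_{\A}(G(H(X)),G(H(Y)))$ rather than $Hom_{\A}(F(X),F(Y))$, so strictly $\iota_{X,Y}(H(f))=F(f)$ while $G(H(f))=\phi_Y^{-1}F(f)\phi_X$, but this orientation issue (which you yourself flag) does not affect the argument.
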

\begin{proof}
Let $X,Y$ be objects of $\C$. If $H$ exists then the composition 
$Hom_{\C}(X,Y)\stackrel{H}{\ra} Hom_{\B}(H(X),H(Y))\stackrel{G}{\ra} Hom_{\A}(GF(X),GH(Y))\stackrel{\phi}{\ra}Hom_{\A}(F(X),F(Y))$
is equal to $Hom_{\C}(X,Y)\stackrel{F}{\ra} Hom_{\A}(F(X),F(Y))$. 
This shows that the condition in the lemma holds. 
Conversely, Since $G$ is faithful, for every morphism $f:X\ra Y$ in $\C$ there is only one possible way to define
$H(f)$ in a way which will make the diagram commutative.
The morphism $H(f)$ will be the unique morphism whose image under
$Hom_{\B}(H(X),H(Y))\ra Hom_{\A}(GH(X),GH(Y))\ra Hom_{\A}(F(X),F(Y))$ is equal to $F(f)$.
If the condition of the lemma holds there is such a morphism,
and by using again the faithfulness of $G$ we can prove that if we define $H$ in this way on morphisms we will get a functor.
The fact that faithfulness of $F$ implies faithfulness of $H$ is immediate. 
\end{proof}
\end{subsection}
\begin{subsection}{Structures}
Let $W$ be an object of a symmetric monoidal rigid category $\C$.
Let $x_i\in Hom_{\C}(\one,W^{p_i,q_i})$ where $W^{p_i,q_i}:=W^{\ot p_i}\ot (W^*)^{\ot q_i}$ be a collection of morphisms
(which we shall also call \textit{tensors}).
We call the pair $(W,\{x_i\})$ an \textit{algebraic structure} in $\C$ or just a structure in $\C$.
Usually we will just refer to $W$ as a structure, without mentioning the tensoors $\{x_i\}$.
For example, if $W$ is an algebra in $Vec_K$ then the algebra structure is given by the multiplication map $m: W\ot W\rightarrow W$,
which can also be regarded as a map $m\in Hom_{K}(\one,W^{1,2})$ (by using the isomorphisms in Equation \ref{duality1}
we have that $Hom_{\C}(V,U)\cong Hom_{\C}(\one,V^*\ot U)$ for every $U,V\in ob \C$.
In case $\C=Vec_K$ we also have an identification $Hom_{\C}(V,U)\cong V^*\ot U)$).
If $W$ is a module over an algebra $A$ (in $Vec_K$), 
then for every $a\in A$ we have a tensor $T_a\in W^{1,1}$ that specifies the action of $a$. 
In particular, if $A=K[x]$, then an $A$-module will be the same as a vector space $W$ together with a single endomorphism $T:W\ra W$.
If $H$ is a finite dimensional Hopf algebra over $K$ and $W$ is an $H$-comodule algebra, 
then the structure of $W$ contains the multiplication $m\in W^{1,2}$ together with linear endomorphisms $T_f\in W^{1,1}$, one for each $f\in H^*$.
More details on this structure will be given in Section \ref{examples2}.

If $x\in W^{p,q}$ we call $(p,q)$ the \textit{type} of the tensor $x$.
The type of a tensor is thus an element of $\mathbb{N}^2$ (we consider here 0 as a natural number).
Let $(W,\{x_i\})$ and $(W',\{y_i\})$ be two structures such that the type $(p_i.q_i)$ of $x_i$ is the same as the type of $y_i$ for each $i$.
If $\psi:W\rightarrow W'$ is an isomorphism in $\C$, then $\psi$ induces an isomorphism 
$\psi^{p,q}=\psi^{\ot p}\ot ((\psi^*)^{-1})^{\ot q}:W^{p,q}\rightarrow W'^{p,q}$ for every $(p,q)\in \mathbb{N}^2$
(by using the monoidal and rigid structure of $\C$).
An \textit{isomorphism} between the structures $(W,\{x_i\})$ and $(W',\{y_i\})$ is an isomorphism $\psi:W\rightarrow W'$ such that
$\psi^{p_i,q_i}_*(x_i)=y_i$ for every $i$. 
if $F:\C\ra \D$ is a symmetric monoidal functor between symmetric monoidal rigid categories, then if $(W,\{x_i\})$ is a structure in $\C$,
one can use the monoidal structure on $F$ to get a structure in $\D$ of the same type.
Indeed, we have a map $Hom_{\C}(\one,W^{p,q})\ra Hom_{\D}(F(\one),F(W^{p,q}))\cong 
Hom_{\D}(\one,F(W)^{p,q})$ for every $(p,q)\in\mathbb{N}^2$. 
If we denote the image of $x_i$ under the suitable map by $y_i$, then we get a structure $(F(W),\{y_i\})$ in $\D$.
We will say that the structure $(W,\{x_i\})$ \textit{lies above} the structure $(F(W),\{y_i\})$,
or that $(W,\{x_i\})$ is a \textit{lift} of the structure $(F(W),\{y_i\})$.

Two important instances of this are the following: 
if $L$ is an extension field of $K$, then we have a natural extension of scalars functor $F:Vec_K\ra Vec_L$
given by $F(W) = W\ot_K L$. In this case we say that $W$ is a \textit{form} of $F(W)$.
If $(W,\{x_i\})$ is a structure in $Vec_K$ equipped with the action of an algebraic group $G$ such that $g(x_i) = x_i$ for every $g\in G$ and every $i$,
then $(W,\{x_i\})$ can also be considered as a structure in $\C=Rep_K-G$. The forgetful functor $F:\C\ra Vec_K$
then sends the structure $(W,\{x_i\})$ into itself.
\end{subsection}
\end{section}

\begin{section}{The kernel completion of a category}\label{sec:ker-comp}
Let $\C$ be an additive symmetric monoidal rigid category, and let $F:\C\ra \A$	
be a faithful symmetric monoidal functor from $\C$ 
to an abelian symmetric monoidal rigid category $\A$.
In this section we shall construct a new category, called the kernel completion of $\C$, denoted by $Ker(\C)_F$,
which will give us a first approximation for kernels of morphisms in $\C$.
We shall use this construction in the next section inductively in order to construct the fundamental category $\C_W$.
We begin with a definition:
A sequence $A\ra B\ra C\cdots$ of objects and morphisms in $\C$ is called \textit{good} with respect to $F$
if after applying $F$ to it we get an exact sequence in $\A$.
We say that the category $\C$ is good with respect to $F$ if every morphism $f:A\ra B$ in $\C$
can be embedded in a good sequence of the form: $X\stackrel{g}{\ra} A\stackrel{f}{\ra} B$.
In this case we can also use the condition again for the morphism $g$, and embed $f$ in a good sequence of the form
$Y\stackrel{h}{\ra} X\stackrel{g}{\ra} A\stackrel{f}{\ra} B$.
When the functor $F$ is clear from the context we will just say that $\C$ is good. 
We assume from now on that $\C$ is good and small.
In the next section we shall use linear algebra to explain why this holds for the categories we are interested in.
We claim now the following:
\begin{theorem}\label{ker-comp}
Let $\C$, $F$ and $\A$ be as above. 
There exists an additive symmetric monoidal rigid category $Ker(\C)_F$, called the kernel completion of $\C$ with respect to $F$, 
and symmetric monoidal faithful functors $I:\C\ra Ker(\C)_F$ and $\ti{F}:Ker(\C)_F\ra \A$ such that the following holds:\\
1. The functor $F$ is isomorphic with $\ti{F}I$.\\
2. For every morphism $f:A\ra B$ in $\C$ there exists an object $K(f)$ and a morphism $i_f:K(f)\ra I(A)$ such that the sequence 
$$\ti{F}(K(f))\ra \ti{F}(I(A))\ra \ti{F}(I(B))$$ is exact in $\A$, and such that if $g:C\ra A$ is a morphism in $\C$ which satisfies $fg=0$
then there exists a unique morphism $g':I(C)\ra K(f)$ in $Ker(\C)_F$ such that $i_fg'=I(g)$.\\
3, If $\B$ is an abelian symmetric monoidal rigid category equipped with faithful additive symmetric monoidal functors $J:\C\ra \B$ and $G:\B\ra \A$ 
such that $G$ is also exact and such that 
$GJ\cong F$ then there exists a unique (up to isomorphism) faithful symmetric monoidal functor $H: Ker(\C)_F\ra \B$ 
such that $HI\cong J$ and $GH\cong \ti{F}$.
\end{theorem}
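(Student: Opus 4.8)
The plan is to realise $Ker(\C)_F$ concretely as a category of ``formal kernels'' lying between $\C$ and $\A$. I take the objects of $Ker(\C)_F$ to be the morphisms of $\C$; I write $[f\colon A\ra B]$ for such an object, which is to stand for the subobject $\ker F(f)\subseteq F(A)$ of $\A$. The functor $\ti F$ is then forced: $\ti F[f\colon A\ra B]:=\ker(F(f))$, with its canonical monomorphism $\iota_f\colon\ker F(f)\hookrightarrow F(A)$. The only delicate choice is the Hom-sets, and I declare a morphism $[f\colon A\ra B]\ra[f'\colon A'\ra B']$ to be a morphism $\ker F(f)\ra\ker F(f')$ in $\A$ of the form $F(\beta)|_{\ker F(f)}$ for some $\beta\in Hom_\C(A,A')$ with $F(\beta)(\ker F(f))\subseteq\ker F(f')$ — the ``$\C$-definable'' maps between these kernels. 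Composition is inherited from $\A$ and stays inside this class because $F(\gamma\beta)=F(\gamma)F(\beta)$, so $\ti F$ is a functor and is visibly faithful. Setting $I(A):=[0_A\colon A\ra 0]$ (so $\ker F(0_A)=F(A)$) gives a functor $I\colon\C\ra Ker(\C)_F$ which is fully faithful and satisfies $\ti F I=F$; that is part~1.

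\textbf{Additive, monoidal and rigid structure.} The Hom-groups are subgroups of Hom-groups of $\A$, hence abelian groups with bilinear composition, and biproducts are given by $[f_1]\oplus[f_2]:=[f_1\oplus f_2]$. For the tensor product one cannot use $[f_1\ot f_2]$, since $\ker(F(f_1)\ot F(f_2))\neq\ker F(f_1)\ot\ker F(f_2)$; instead I set $[f_1\colon A_1\ra B_1]\ot[f_2\colon A_2\ra B_2]:=[(f_1\ot 1,\,1\ot f_2)\colon A_1\ot A_2\ra(B_1\ot A_2)\oplus(A_1\ot B_2)]$, whose kernel under $F$ is exactly $\ker F(f_1)\ot\ker F(f_2)$ because $X\ot-$ is exact in $\A$. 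The unit is $I(\one_\C)$, and the associativity, unit and symmetry constraints are transported from those of $\C$ (they are morphisms of $\C$, hence live in $Ker(\C)_F$ via $I$), the coherence diagrams holding since $\ti F$ is faithful and they hold in $\A$. For rigidity I use that $\C$ is good: embedding $f$ in a good double sequence $Y\sr{h}{\ra}X\sr{g}{\ra}A\sr{f}{\ra}B$ gives $\ker F(f)\cong Coker(F(h))$, hence $(\ker F(f))^*\cong\ker(F(h)^*)=\ker F(h^*)$, so I take $[f]^\vee:=[h^*\colon X^*\ra Y^*]$ and construct the evaluation and coevaluation morphisms out of the rigid structure of $\C$ together with $g$ and $h$; the triangle identities are then automatic from faithfulness of $\ti F$ and rigidity of $\A$.

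\textbf{Part 2.} Set $K(f):=[f\colon A\ra B]$ and let $i_f\colon K(f)\ra I(A)$ be the morphism given by $\iota_f\colon\ker F(f)\hookrightarrow F(A)$; this is of the required form, being $F(\mathrm{id}_A)|_{\ker F(f)}$. Then $\ti F(K(f))\ra\ti F(I(A))\ra\ti F(I(B))$ is $\ker F(f)\hookrightarrow F(A)\sr{F(f)}{\ra}F(B)$, exact at $F(A)$ by definition of kernel. If $g\colon C\ra A$ has $fg=0$, then $F(g)$ sends $F(C)=\ker F(0_C)$ into $\ker F(f)$, so $g':=F(g)|_{\ker F(0_C)}$ is a morphism $I(C)\ra K(f)$, and $i_fg'=F(g)=I(g)$; uniqueness is immediate from faithfulness of $\ti F$.

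\textbf{Part 3 and the main obstacle.} Suppose $\B$, $J\colon\C\ra\B$, $G\colon\B\ra\A$ are as in the statement, with $G$ exact and faithful and $GJ\cong F$. The key observation is that a faithful exact functor between abelian categories reflects exact sequences, so every good sequence of $\C$ (which becomes exact under $F=GJ$) also becomes exact in $\B$ under $J$; in particular $G(\ker J(f))\cong\ker F(f)$. I define $H\colon Ker(\C)_F\ra\B$ by $H([f\colon A\ra B]):=\ker(J(f))$ and $H\bigl(F(\beta)|_{\ker F(f)}\bigr):=J(\beta)|_{\ker J(f)}$. This is well defined: the condition $F(\beta)(\ker F(f))\subseteq\ker F(f')$ says a composite of $\A$-morphisms vanishes, so by faithfulness and exactness of $G$ the corresponding composite of $\B$-morphisms vanishes, i.e.\ $J(\beta)(\ker J(f))\subseteq\ker J(f')$; and if two choices of $\beta$ give the same morphism of $Ker(\C)_F$ then faithfulness of $G$ forces the two values in $\B$ to agree. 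One checks $GH\cong\ti F$, $HI\cong J$, and that $H$ is symmetric monoidal using exactness of $\ot$ in $\B$. For uniqueness, any such $H'$ must send $[f]$ to an object which, via the monomorphism $i_f$ and the relation $I(f)\circ i_f=0$ together with $G$ reflecting monomorphisms and isomorphisms, is forced to be isomorphic to $\ker J(f)$, and the value of $H'$ on morphisms is then forced by faithfulness of $G$; hence $H'\cong H$. The crux of the whole argument is calibrating the Hom-sets: they must be large enough for the factorisation in part~2 and for the monoidal and rigid structures to exist, yet small enough that the functor $H$ is uniquely determined — taking the $\C$-definable maps $F(\beta)|_{\ker F(f)}$ achieves exactly this, the point being that good sequences remain exact under any $J$ as above because $G$ is faithful exact. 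The fiddliest step is producing the rigid structure, i.e.\ realising the duality data of $\ker F(f)$ by $\C$-definable maps, which is where one really uses that $\C$ is both rigid and good (and where a little care is needed in the choice of good extensions).
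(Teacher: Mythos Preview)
Your overall architecture matches the paper's: objects of $Ker(\C)_F$ are morphisms of $\C$, $\ti F([f])=\ker F(f)$, $I(A)=[0_A]$, the tensor is $[(f_1\ot 1,\,1\ot f_2)]$, and the dual of $[f]$ is built from a good extension. The substantive gap is in your choice of Hom-sets.

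You declare $Hom([f],[f'])$ to consist of restrictions $F(\beta)|_{\ker F(f)}$ for $\beta\colon A\ra A'$ in $\C$. The paper instead starts from maps induced by morphisms $h\colon Y\ra A'$ out of the \emph{resolving object} $Y$ in a good sequence $X\sr{f_1}{\ra}Y\sr{f_0}{\ra}A\sr{f}{\ra}B$ (so that $h$ with $hf_1=0$ and $f'h=0$ descends to $\ker F(f)\cong Coker\,F(f_1)\ra\ker F(f')$), and then closes this class under composition \emph{and} under inverses of $\A$-invertible morphisms. Your class sits inside the paper's (take $h=\beta f_0$), but is in general strictly smaller, and neither of the two extra closure steps is automatic for your class.

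This bites exactly where you hand-wave, at the evaluation morphism. With $[f]^\vee=[h^*\colon X^*\ra Y^*]$, the source of $[f]^\vee\ot[f]$ is $X^*\ot A$, so a morphism of your type to $\one$ corresponds (by rigidity of $\C$) to some $T\colon A\ra X$; unwinding the identification $\ker F(h^*)\cong(\ker F(f))^*$, the requirement that it realise the evaluation becomes $F(gT)|_{\ker F(f)}=\mathrm{id}$, i.e.\ $gT\in End_\C(A)$ is a projector onto $\ker F(f)$. There is no reason such a $T$ exists in $\C$. (Your coevaluation \emph{does} work, via $(1_A\ot g^*)\circ coev_A\colon\one\ra A\ot X^*$; the asymmetry is the point.) The paper circumvents this by producing evaluation from the other side of the good sequence: it resolves $[f]^\vee\ot[f]$ by a good sequence with middle term $A^*\ot Y$ and uses the $\C$-map $A^*\ot Y\xrightarrow{1\ot f_0}A^*\ot A\xrightarrow{ev_A}\one$, which descends because $ff_0=0$ and $f_0f_1=0$. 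The closure under inverses is likewise needed to make the dual independent (up to isomorphism in $Ker(\C)_F$) of the chosen good sequence, which your ``$\C$-definable'' maps do not obviously guarantee.
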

The rest of this section will be used to define the category $Ker(\C)_F$ and prove Theorem \ref{ker-comp}.
We start with constructing the hom-sets of $Ker(\C)_F$.
Since the category $\C$ is small we can speak about the set of all morphisms in $\C$.
We give the following definition:
\begin{definition}\label{def:homsets}
The collection $X^{f,g}\subseteq Hom_{\A}(Ker(F(f)),Ker(F(g))) $ where $f$ and $g$ are morphisms in $\C$ 
is the smallest collection of abelian subgroups which satisfy the following conditions:\\
1. If $X\stackrel{f_1}{\ra}Y\stackrel{f_0}{\ra} A \stackrel{f}{\ra}B$ is a good sequence in $\C$ and 
$g:C\ra D$ and $h:Y\ra C$ are morphisms in $\C$ which satisfies $gh=0$ and $hf_1=0$, then the induced map 
$\wb{F(h)}:Ker(F(f))\cong Coker(F(f_1))\ra Ker(F(g))$
belongs to $X^{f,g}$.\\
2. The collection $X^{f,g}$ is closed under composition: if $a\in X^{f,g}$ and $b\in X^{g,h}$ then $ba\in X^{f,h}$.\\
3. If $a\in X^{f,g}$ is invertible in $\A$, then $a^{-1}\in X^{g,f}$.
\end{definition}
It is clear that the collection $X^{f,g}$ exists and it is clear how to construct it: 
we start with all the morphisms arising from Condition 1, and we perform closure operation to fulfil Conditions 2 and 3.
We next prove the following:
\begin{lemma}
For every morphism $f$ in $\C$, we have $Id_{Ker(F(f))}\in X^{f,f}$. 
\end{lemma}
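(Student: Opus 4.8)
The plan is to use the hypothesis that $\C$ is good with respect to $F$ twice, producing a good sequence of length four, and then to invoke Condition~1 of Definition~\ref{def:homsets} with a carefully chosen morphism $h$ so that the resulting element of $X^{f,f}$ is forced to be the identity. Fix a morphism $f:A\ra B$ in $\C$. Since $\C$ is good, I can embed $f$ in a good sequence $X_1\stackrel{g_1}{\ra}A\stackrel{f}{\ra}B$, and then, applying the condition again to $g_1$, embed it further so that
$$X_2\stackrel{g_2}{\ra}X_1\stackrel{g_1}{\ra}A\stackrel{f}{\ra}B$$
is good with respect to $F$. Applying $F$ yields an exact sequence in $\A$, so the consecutive composites vanish there; since $F$ is additive and faithful, this vanishing lifts to $\C$, giving $fg_1=0$ and $g_1g_2=0$.

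Next I would apply Condition~1 of Definition~\ref{def:homsets} to this good sequence, taking (in the notation of that condition) $f_1=g_2$ and $f_0=g_1$, and then choosing $g:=f:A\ra B$ and $h:=g_1:X_1\ra A$. The required hypotheses hold: $gh=fg_1=0$ and $hf_1=g_1g_2=0$ in $\C$. Condition~1 then asserts that the induced map $\wb{F(g_1)}:Ker(F(f))\cong Coker(F(g_2))\ra Ker(F(g))=Ker(F(f))$ belongs to $X^{f,f}$.

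It remains to identify this map with $Id_{Ker(F(f))}$, which is the only point demanding care. From exactness of the image sequence we have $Ker(F(f))=Im(F(g_1))$ and $Ker(F(g_1))=Im(F(g_2))$. Hence $F(g_1)$ factors through its coimage, and the map it induces out of $Coker(F(g_2))$ is an isomorphism onto $Im(F(g_1))=Ker(F(f))$; but this induced isomorphism is, by construction, precisely the inverse of the canonical identification $Ker(F(f))\cong Coker(F(g_2))$ that is built into the statement of Condition~1 for the choice $f_0=g_1$. Composing the two therefore gives the identity on $Ker(F(f))$, so $Id_{Ker(F(f))}\in X^{f,f}$. I expect the only (minor) obstacle to be exactly this bookkeeping — checking that the map $F(g_1)$ induces on $Coker(F(g_2))$ cancels against the identification $Ker(F(f))\cong Coker(F(g_2))$ implicit in Condition~1 — and this is immediate once both isomorphisms are written out.
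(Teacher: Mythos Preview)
Your proof is correct and is essentially the same as the paper's: you build the length-four good sequence by applying goodness twice, then invoke Condition~1 with $h=g_1$ and $g=f$, and observe that the induced map $\wb{F(g_1)}$ composed with the canonical identification $Ker(F(f))\cong Coker(F(g_2))$ is the identity. The paper's proof is terser (it writes $f_0,f_1$ for your $g_1,g_2$ and omits the bookkeeping you carefully spell out), but the argument is identical.
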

\begin{proof}
Let $f:A\ra B$ and let $X\stackrel{f_1}{\ra}Y\stackrel{f_0}{\ra} A \stackrel{f}{\ra}B$ 
be a good sequence in $\C$. 
The map $\wb{F(f_0)}:Ker(F(f))\ra Ker(F(f))$ is then the identity map $I_{Ker(F(f))}$
\end{proof}

\begin{definition}
We define the category $Ker(\C)_F$ as follows:
the objects of the category are in one-to-one correspondence with the morphisms in $\C$ and are denoted by $K(f)$.
The morphisms are $$Hom_{Ker(\C)_F}(K(f),K(g)) = X^{f,g}.$$
The composition of morphisms is the same as composition of morphisms in $\A$.
\end{definition}
By the construction of $X^{f,g}$ it is clear that the composition is well defined,
and by the previous lemma we see that $Hom_{Ker(\C)_F}(K(f),K(f))$ contains the identity morphism.
We next claim the following lemma:
\begin{lemma}
The category $Ker(\C)_F$ is an additive category. 
\end{lemma}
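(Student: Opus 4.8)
The plan is to verify, in turn, the four defining properties of an additive category: each hom-set is an abelian group, composition is bilinear, there is a zero object, and every finite family of objects has a biproduct. The first two are essentially built into the construction. By Definition~\ref{def:homsets}, $Hom_{Ker(\C)_F}(K(f),K(g)) = X^{f,g}$ is by fiat an abelian subgroup of $Hom_{\A}(Ker(F(f)),Ker(F(g)))$; and composition in $Ker(\C)_F$ is, by definition, the composition of $\A$, which is $\Z$-bilinear since $\A$ is abelian (hence additive), and this bilinearity restricts to the subgroups $X^{\bullet,\bullet}$.

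For the zero object I would take $z := K(Id_{\one})$, which is an object of $Ker(\C)_F$ because $\C$ is good, so $Id_{\one}$ embeds in a good sequence. Since $F(Id_{\one}) = Id_{F(\one)}$ is an isomorphism in $\A$, its kernel is $0$; hence for every morphism $g$ of $\C$ we get $Hom_{Ker(\C)_F}(z,K(g)) = X^{Id_{\one},g}\subseteq Hom_{\A}(0,Ker(F(g))) = 0$ and similarly $Hom_{Ker(\C)_F}(K(g),z) = 0$, so $z$ is a zero object.

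The heart of the argument is the existence of binary biproducts. Given $f:A\ra B$ and $g:C\ra D$ in $\C$, I claim $K(f\oplus g)$ is a biproduct of $K(f)$ and $K(g)$, where $f\oplus g:A\oplus C\ra B\oplus D$ is the biproduct morphism in the additive category $\C$. Since $F$ is additive, $F(f\oplus g)\cong F(f)\oplus F(g)$, and in the abelian category $\A$ this gives a canonical identification $Ker(F(f\oplus g))\cong Ker(F(f))\oplus Ker(F(g))$. The task is then to show that the canonical injections $\iota_1,\iota_2$ and projections $\pi_1,\pi_2$ of this direct sum, viewed as morphisms of $\A$, in fact belong to the hom-sets of $Ker(\C)_F$. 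To see $\iota_1\in X^{f,f\oplus g}$: using goodness of $\C$ twice, choose a good sequence $X_1\stackrel{f_1}{\ra}Y_1\stackrel{f_0}{\ra}A\stackrel{f}{\ra}B$; applying $F$, using exactness and the faithfulness of $F$ gives $f_0f_1 = 0$ and $ff_0 = 0$, so $h := \iota_A f_0:Y_1\ra A\oplus C$ satisfies $hf_1 = 0$ and $(f\oplus g)h = 0$, whence Condition~1 of Definition~\ref{def:homsets} (with target morphism $f\oplus g$) puts the induced map $\wb{F(h)}:Ker(F(f))\cong Coker(F(f_1))\ra Ker(F(f\oplus g))$ into $X^{f,f\oplus g}$; unwinding the identifications shows $\wb{F(h)}$ is precisely $\iota_1$. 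The case of $\iota_2$ is symmetric. For $\pi_1\in X^{f\oplus g,f}$: choose good sequences $X_1\stackrel{f_1}{\ra}Y_1\stackrel{f_0}{\ra}A\stackrel{f}{\ra}B$ and $X_2\stackrel{g_1}{\ra}Y_2\stackrel{g_0}{\ra}C\stackrel{g}{\ra}D$ for $f$ and $g$; their direct sum is a good sequence for $f\oplus g$ (again $F$ preserves $\oplus$ and direct sums of exact sequences in $\A$ are exact), and applying Condition~1 with target morphism $f$ and $h := f_0\circ p:Y_1\oplus Y_2\ra A$, where $p:Y_1\oplus Y_2\ra Y_1$ is the projection (so that $h(f_1\oplus g_1) = 0$ and $fh = 0$), shows that the induced map, which is exactly $\pi_1$, lies in $X^{f\oplus g,f}$; $\pi_2$ is symmetric.

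Finally, the biproduct identities $\pi_i\iota_j = \delta_{ij}Id$ and $\iota_1\pi_1 + \iota_2\pi_2 = Id_{K(f\oplus g)}$ are equalities among morphisms all lying in suitable hom-sets $X^{\bullet,\bullet}$ — by closure under composition (Condition~2), the fact that the $X^{\bullet,\bullet}$ are abelian groups, and the preceding lemma giving $Id_{Ker(F(f))}\in X^{f,f}$ — and they hold in $Ker(\C)_F$ because they hold in $\A$, whose composition and addition are those of $Ker(\C)_F$. Hence $K(f\oplus g)$ is a biproduct, so $Ker(\C)_F$ has all finite biproducts and is additive. I expect the main obstacle to be exactly this verification that the direct-sum structure maps are themselves morphisms of $Ker(\C)_F$: one must unravel Definition~\ref{def:homsets} and use the goodness of $\C$ to realize the injections and projections via Condition~1; once that is done, the remaining points (bilinearity, the zero object, and the biproduct relations) are inherited formally from $\A$.
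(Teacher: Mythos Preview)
Your proof is correct and follows essentially the same approach as the paper: define $K(f)\oplus K(g):=K(f\oplus g)$ and use good sequences for $f$ and $g$ (and their direct sum) together with Condition~1 of Definition~\ref{def:homsets} to realize the canonical injections and projections as morphisms of $Ker(\C)_F$. Your write-up is in fact more complete than the paper's terse proof --- you explicitly exhibit a zero object and verify the biproduct identities, points the paper leaves implicit.
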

\begin{proof}
All the hom-sets in $Ker(\C)_F$ are abelian groups.
If $f:A\ra B$ and $g:C\ra D$ are morphisms in $\C$, then we define $K(f)\oplus K(g)$ to be $K(f\oplus g)$
where $f\oplus g:A\oplus C\ra B\oplus D$. We use two good sequences $X_1\ra Y_1\ra A\ra B$ and $X_2\ra Y_2\ra C\ra D$ 
and their direct sum (which is also a good sequence) 
in order to prove that the canonical injections and projections are contained in $Hom_{Ker(\C)_F}(K(f\oplus g),K(f))$,
$Hom_{Ker(\C)_F}(K(f),K(f\oplus g))$ and similarly for $g$.
\end{proof}

We define $\ti{F}(K(f)) = Ker(F(f))$ and we define $\ti{F}$ on morphisms to be the inclusion
$Hom_{Ker(\C)_F}(K(f),K(g))\subseteq Hom_{\A}(Ker(F(f)),Ker(F(g)))$.
Then it is easy to see that $\ti{F}$ is an additive faithful functor.
We next define $I:\C\ra Ker(\C)_F$ in the following way:
for every object $A$ of $\C$ we have the unique morphism $0_A:A\ra 0$.
We define $I(A)= K(0_A)$.
If $f:A\ra B$ is a morphism in $\C$, then we use the diagram
$$\xymatrix{0\ar[r]\ar[d] & A\ar[r]^{1_A}\ar[d]^f & A\ar[r]\ar[d]^f &  0 \\ 0 \ar[r] & B\ar[r]^{1_B} & B\ar[r] & 0 }$$
whose rows are good to show that $F(f)\in X^{0_A,0_B}$. We define $I(f) = F(f)\in Hom_{Ker(\C)_F}(K(0_A),K(0_B))$.
Again, a direct verification shows that $I$ is also faithful and additive, and that $\ti{F}I\cong F$.

By using a good sequence $X\stackrel{f_1}{\ra}Y\stackrel{f_0}{\ra} A \stackrel{f}{\ra}B$ 
we get, by using Condition 1 in Definition \ref{def:homsets}, that the canonical inclusion $i_f:K(f)\ra I(A)$ is contained in $Hom_{Ker(\C)_F}(K(F),I(A))$.
By using Condition 1 again we get the desired map of Part 2 of Theorem \ref{ker-comp}.
We have thus proved Parts 1 and 2 of Theorem \ref{ker-comp}.
We next prove that $Ker(\C)_F$ is indeed a symmetric monoidal rigid category,
that all the functors are symmetric monoidal functors, and that the universal property from Part 3 of Theorem \ref{ker-comp} holds.

\begin{lemma}
 The category $Ker(\C)_F$ is a symmetric monoidal rigid category. The functors $\ti{F}$ and $I$ constructed above are symmetric monoidal functors.
\end{lemma}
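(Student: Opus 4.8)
The plan is to transport the entire symmetric monoidal rigid structure of $\A$ back along the faithful functor $\ti{F}:Ker(\C)_F\ra\A$. What makes this possible is that $\C$ is good \emph{and} rigid: every object of $\A$ needed to build the structure — a tensor product of two kernels, a dual of a kernel, the unit — is again of the form $Ker(F(h))$ for a morphism $h$ of $\C$ built explicitly from the given data, hence (canonically isomorphic to) an object of $Ker(\C)_F$. Having fixed, using smallness of $\C$, a good sequence $X_f\sr{f_1}{\ra}Y_f\sr{f_0}{\ra}A_f\sr{f}{\ra}B_f$ for every morphism $f$ of $\C$, one defines all of the remaining data — the associativity, unit and symmetry constraints, the tensor product $a\ot b$ of morphisms, and the evaluation and coevaluation maps — by transporting the corresponding data of $\A$ through these identifications. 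The only non-formal point is then to check that each transported morphism actually lands in the prescribed hom-group $X^{\bullet,\bullet}$ of Definition \ref{def:homsets}.

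\emph{Objects.} For morphisms $f:A_f\ra B_f$ and $g:A_g\ra B_g$ in $\C$, I would set $K(f)\ot K(g):=K(h)$, where $h=(f\ot 1_{A_g},\,1_{A_f}\ot g):A_f\ot A_g\ra(B_f\ot A_g)\oplus(A_f\ot B_g)$. Since $\ot$ is exact in $\A$ (Section \ref{prel}), one has $Ker(F(h))=\bigl(Ker(F(f))\ot F(A_g)\bigr)\cap\bigl(F(A_f)\ot Ker(F(g))\bigr)=Ker(F(f))\ot Ker(F(g))$, which provides the canonical isomorphism $\ti{F}(K(f)\ot K(g))\cong\ti{F}(K(f))\ot\ti{F}(K(g))$. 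The unit object is $I(\one)=K(0_{\one})$, with $\ti{F}(I(\one))=Ker(\one_{\A}\ra 0)\cong\one_{\A}$. For a dual, I would dualize the fixed good sequence of $f$ inside $\C$; as duality is exact in $\A$ and commutes with $F$ up to natural isomorphism, the sequence $B_f^{*}\ra A_f^{*}\ra Y_f^{*}\sr{f_1^{*}}{\ra}X_f^{*}$ is again good, so $K(f)^{*}:=K(f_1^{*})$ is an object of $Ker(\C)_F$ with $\ti{F}(K(f)^{*})=Ker(F(f_1^{*}))\cong\bigl(Coker(F(f_1))\bigr)^{*}\cong\bigl(Ker(F(f))\bigr)^{*}=\ti{F}(K(f))^{*}$, using the identification $Ker(F(f))\cong Coker(F(f_1))$ from Condition 1 of Definition \ref{def:homsets}.

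\emph{Morphisms and constraints.} With $a\ot b$, the associativity, unit and symmetry isomorphisms, and the maps $ev$ and $coev$ defined as the transports of the corresponding morphisms of $\A$ through the isomorphisms above, the essential claim is that each of them lies in the appropriate collection $X^{\bullet,\bullet}$. I would prove this by induction along the inductive description of $X^{f,g}$: for a morphism produced by Condition 1 one checks, by the same kind of bookkeeping with good sequences as above (tensoring two good sequences, respectively dualizing one), that its image under $\ot$ — respectively under ${}^{*}$ — is again produced by Condition 1 for the corresponding morphism of $\C$; the closure Conditions 2 and 3 are immediate, since $\ot$ and ${}^{*}$ are functorial on $\A$. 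Once this is known, every coherence diagram of $Ker(\C)_F$ (the pentagon and triangle axioms, the hexagon axioms for the symmetry, the zig-zag identities for $ev$ and $coev$) commutes, since it commutes in $\A$ and $\ti{F}$ is faithful; so $Ker(\C)_F$ is a symmetric monoidal rigid category. That $\ti{F}$ is symmetric monoidal is then immediate, as its structural isomorphisms are the canonical ones by construction and its compatibility with the constraints was built in. For $I$, one computes $I(X)\ot I(Y)=K(h)$ with $h=(0_X\ot 1_Y,\,1_X\ot 0_Y)=0_{X\ot Y}$, so $I(X)\ot I(Y)=I(X\ot Y)$ and $I(\one)$ is the unit; hence $I$ is strictly monoidal on objects, and strictness on morphisms together with compatibility with the symmetry and with duals follows from the corresponding properties of $F$ and the faithfulness of $\ti{F}$.

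\emph{Main obstacle.} The technical heart is the induction above, concretely its Condition 1 step: checking that tensoring, respectively dualizing, the good sequences underlying a generating morphism produces good sequences witnessing that the transported morphism is again of Condition 1 type. This is a finite but slightly fiddly diagram chase — the direct summand $(B_f\ot A_g)\oplus(A_f\ot B_g)$ in $h$ and the reversal of arrows under dualization need a bit of care — whereas everything else in the lemma is formal once one uses that $\ti{F}$ is faithful and that $\ot$ and ${}^{*}$ are exact in $\A$.
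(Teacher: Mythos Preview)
Your proposal is correct and follows essentially the same route as the paper: the same definitions $K(f)\ot K(g):=K(f\ot 1\oplus 1\ot g)$ and $K(f)^*:=K(f_1^*)$, the same mechanism of transporting structure from $\A$ along the faithful $\ti{F}$ and verifying that the transported morphisms lie in $X^{\bullet,\bullet}$ by induction on Conditions 1--3 of Definition \ref{def:homsets}, and the same appeal to faithfulness of $\ti{F}$ for all coherence axioms. The paper packages the ``transport'' step via Lemma \ref{functorsdiagram} and reduces tensoring of morphisms to the case $(a,1)$ before invoking closure under composition, but these are cosmetic differences rather than a different strategy.
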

\begin{proof}
Let $f:A_1\ra B_1$ and $g:A_2\ra B_2$ be two morphisms in $\C$. We define
$$K(f)\ot K(g):= K(f\ot 1_{A_2}\oplus 1_{A_1}\ot g)$$
where $f\ot 1_{A_2}\oplus 1_{A_1}\ot g: A_1\ot A_2\ra A_1\ot B_2\oplus B_1\ot A_2$.
In order to extend the definition of the tensor product on objects to morphisms, we use Lemma \ref{functorsdiagram}.
We have the following diagram of categories and functors:
$$\xymatrix{ & & Ker(\C)_F\ar[d]^{\ti{F}} \\ Ker(\C)_F\times Ker(\C)_F\ar[rr]^{\otimes(\ti{F},\ti{F})} & &\A}$$
Since tensor product with a given object is exact in $\A$, we have a natural isomorphism $\ti{F}(K(f\ot 1_{A_2}\oplus 1_{A_1}\ot g))\cong Ker(F(f))\ot Ker(F(g))$. 
We are in the situation of Lemma \ref{functorsdiagram}.
We need to show that the condition of the lemma holds here.
For this, we need to show that the image of every morphism in $Ker(\C)_F\times Ker(\C)_F$ under $\ot(\ti{F},\ti{F})$ is contained in the image of the functor
$\ti{F}:Ker(\C_F)\ra \A$. We will call such a morphism \textit{liftable} (we shall use this notation again in every application of Lemma \ref{functorsdiagram}).
For this, We shall use again Definition \ref{def:homsets}.
Since the collection of liftable morphisms is closed under composition, it is enough to prove that all morphisms
of the form $(a,1):(K(f),K(h))\ra (K(g),K(h))$ where $a:K(f)\ra K(g)$ is a morphism in $Ker(\C)_F$ are liftable.

We start with the morphisms which appear in Condition 1 of Definition \ref{def:homsets}.
Let then $f:A_1\ra B_1$, $g:A_2\ra B_2$ $h:A_3\ra B_3$ be three morphisms in $\C$. 
Let $l:K(f)\ra K(g)$ be a morphism which arises from a good sequence $X_1\stackrel{f_1}{\ra}Y_1\stackrel{f_0}{\ra} A_1 \stackrel{f}{\ra}B_1$ and a morphism 
$\hat{l}:Y_1\ra A_2$ such that $g\hat{l}=0$ and $\hat{l}f_1=0$.
We would like to show that the morphism $(l,1):(K(f),K(h))\ra (K(g),K(h))$ is liftable.
For this, we embed $h$ in a good sequence $X_3\stackrel{h_1}{\ra}Y_3\stackrel{h_0}{\ra} A_3 \stackrel{h}{\ra}B_3$.
We then have the following good sequence:
$$ X_1\ot Y_3\oplus Y_1\ot X_3\ra Y_1\ot Y_3\ra A_1\ot A_3\ra A_1\ot B_3\oplus B_1\ot A_3$$
The morphism $\hat{l}\ot h_0: Y_1\ot Y_3\ra A_2\ot A_3$ then induces the desired morphism $K(f)\ot K(h)\ra K(g)\ot K(h)$.
The collection of liftable morphisms is closed under composition, so Condition 2 is satisfied trivially.
For Condition 3, notice that if $l:K(f)\ra K(g)$ is liftable and becomes an isomorphism after applying $F$, then $l\ot 1: K(f)\ot K(h)\ra K(g)\ot K(h)$ 
also becomes an isomorphism after applying $F$.
Since a morphism in $Ker(\C)_F$ which becomes invertible after applying $F$ is invertible,
we get that $(l^{-1},1)$ is liftable as well. This finishes the proof that the category $Ker(\C)_F$ is monoidal.
We still need to prove that $Ker(\C)_F$ is also symmetric. 
For this we just write down the isomorphism $K(f)\ot K(g)\ra K(g)\ot K(f)$ 
which is induced from the symmetry isomorphism $A_1\ot A_2\ra A_2\ot A_1$ in $\C$.

Finally, we prove that $Ker(\C)_F$ is rigid. 
We need to prove that for every morphism $f:A\ra B$ the object $K(f)$ admits a dual.
For this, we will use the fact that $\A$ is rigid, and that $\C$ is good.
Since the duality functor is exact, we have a natural isomorphism $Ker(F(f))^*\cong Coker(F(f)^*)$ in $\A$.
We embed $f$ in a good sequence $X\stackrel{f_1}{\ra}Y\stackrel{f_0}{\ra} A\stackrel{f}{\ra}B$.
The dual sequence $B^*\stackrel{f^*}{\ra} A^*\stackrel{f_0^*}{\ra} Y^*\stackrel{f_1^*}{\ra} X^*$ is also good,
and we have an isomorphism $Ker(F(f))^*\cong Coker(F(f)^*)\cong Ker(F(f_1)^*)\cong Ker(F(f_1^*))$.
So the object $K(f_1^*)$ of $Ker(\C)_F$ is mapped under $\ti{F}$ to an object of $\A$ which is isomorphic with the dual of $\ti{F}(K(f))$.
We have that $K(f_1^*)\ot K(f) = K(f_1^*\ot 1\oplus 1\ot f)$, and the following good sequence: 
$B^*\ot Y\oplus A^*\ot X\ra A^*\ot Y\ra Y^*\ot A\ra Y^*\ot B\oplus X^*\ot A$.
The morphism $A^*\ot Y\ra A^*\ot A\ra \one$ in $\C$ induces then a morphism $\mu:K(f_1^*)\ot K(f)= K(f_1^*\ot 1\oplus 1\ot f)\ra I(\one)=\one$
in $Ker(\C)_F$. In a similar way we get a morphism $\nu:\one\ra K(f)\ot K(f_1^*)$.
If we consider the isomorphism $\ti{F}(K(f_1^*))\cong Ker(F(f))^*$
we see that these $\ti{F}(\mu)$ and $\ti{F}(\nu)$ are exactly the evaluation and coevaluation morphisms for $Ker(F(f))$.
Thus, the composition $\lambda:K(f)\ra K(f)\ot K(f_1^*)\ot K(f)\ra K(f)$ maps under $\ti{F}$ to the identity morphism.  
Since the functor $\ti{F}$ is faithful, we get that $\lambda = Id_{K(f)}$. 
In a similar way, one can show that the composition $K(f_1^*)\ra K(f_1)^*\ot K(f)\ot K(f_1)^*\ra K(f_1^*)$ is the identity morphisms.
This implies that $K(f_1^*)$ is the dual of $K(f)$ in $Ker(\C)_F$ as desired. 
This finishes the proof that $Ker(\C)_F$ is a symmetric monoidal rigid category.
To prove that the functors $I$ and $\ti{F}$ are symmetric monoidal functors, we notice that 
if $A$ and $B$ are objects of $\C$, then we have a natural isomorphism $I(A)\ot I(B) = K(0_A)\ot K(0_B)=K(0_A\ot 0_B)\cong K(0_{A\ot B})\cong I(A\ot B)$ which
makes $I$ into a monoidal functor, and if $f:A_1\ra B_1$ and $g:A_2\ra B_2$ are two morphisms in $\C$, then we have
$\ti{F}(K(f)\ot K(g)) = \ti{F}(K(f\ot 1_{A_2}\oplus 1_{A_1}\ot g)) = Ker(F(f\ot 1_{A_2}\oplus 1_{A_1}\ot g))\cong Ker(F(f)\ot 1_{F(A_2)}\oplus 1_{F(A_1)}\ot F(g))\cong
Ker(F(f))\ot Ker(F(g)) = \ti{F}(K(f))\ot \ti{F}(K(g))$, which shows that $\ti{F}$ is also a monoidal functor.
The proof that the functors $I$ and $\ti{F}$ are also symmetric follows from using the symmetry morphisms in $\C$ and in $\A$, 
and using the fact that the functor $F$ is symmetric.
\end{proof}
Finally, we prove that the category $Ker(\C)_F$ satisfies the universal property of Part 3 of Theorem \ref{ker-comp}.
Assume then that we have $\C\stackrel{J}{\ra}\B\stackrel{G}{\ra}\A$ such that $GJ\cong F$ as in the statement of the theorem.
We define $H:Ker(\C)_F\ra \B$ in the following way:
on objects $H(K(f)) = Ker(J(f))$. 
We then have $G(H(K(f))) = G(Ker(J(f)) \cong Ker(GJ(f)) \cong Ker(F(f))= F(K(f))$
because $G$ is exact. 
To define $H$ on morphisms we use Lemma \ref{functorsdiagram}.
We have a diagram of the form
$$\xymatrix{ & \B\ar[d]^G\\ Ker(\C)_F\ar[r]^{\ti{F}} & \A}$$
where the functor $G$ is faithful, and where we already defined $H$ on objects of $Ker(\C)_F$.
To check that the conditions of the lemma are satisfied, we will use Definition \ref{def:homsets}.
We need to show that morphisms which appear in Condition 1 are liftable,
and that the collection of liftable morphisms is closed under the operations in Conditions 2 and 3.
The fact that the collection of liftable morphisms is closed under composition is immediate,
so we have Condition 2.
If $f$ is a morphism in $\B$ such that $G(f)$ is invertible in $\A$, then $G(Ker(f)) = Ker(G(f)) = 0$ and $G(Coker(f)) = Coker(G(f))=0$ since $G$ is exact. We conclude that
$Ker(f) =0$ and $Coker(f)=0$ since $G$ is faithful. Therefore, $f$ is already invertible in $\B$ (again, because $\B$ is abelian), and so we have Condition 3.
For morphisms which appear in Condition 1 we use the fact that $\B$ is an abelian  category.
Let $X\stackrel{f_1}{\ra}Y\stackrel{f_0}{\ra} A \stackrel{f}{\ra}B$ be a good sequence in $\C$, and let
$g:C\ra D$ and $h:Y\ra C$ be morphisms in $\C$ such that $gh=0$ and $hf_1=0$.
We thus get an induced morphism $\ti{h}:K(f)\ra K(g)$ which maps under $\ti{F}$ to the morphism $\wb{F(h)}:Ker(F(f))\cong Coker(F(f_1))\ra Ker(F(g))$. 
Since $\B$ is abelian and $J$ is additive, the sequence $J(X)\stackrel{J(f_1)}{\ra}J(Y)\stackrel{J(f_0)}{\ra} J(A) \stackrel{J(f)}{\ra}J(B)$ is still a complex, 
and we have an induced morphism $Coker(J(f_1))\ra Ker(J(f))$.
This morphim becomes invertible after applying $G$, and so it is already invertible in $\B$. 
The map $Ker(J(f))\cong Coker(J(f_1))\ra Ker(J(g))$ in $\B$ is therefore mapped under $G$ to the map $\wb{F(h)}$
and so the map $\ti{h}$ is liftable, as desired
(we use here the fact that $G$ is exact, and so it maps the kernel of $J(g)$ to the kernel of $F(g)$). 
This shows that the conditions of Lemma \ref{functorsdiagram} are fulfilled, and we therefore have our desired functor $H:Ker(\C)_F\ra \B$.
Checking that $J\cong HI$ and $GH\cong \ti{F}$ is straightforward. The proof that the functor $H$ is symmetric monoidal follows
the lines of the proof that the functors $I$ and $\ti{F}$ are symmetric monoidal.
This finishes the proof of Theorem \ref{ker-comp}.
We remark here that since $Ker(\C)_F$ is a rigid category, and since $Ker(f^*)^*$ is the cokernel of $f$,
it is also true that every morphism in $\C$ has a cokernel in $Ker(\C)_F$.
We will use this property in the next section.
\end{section}

\begin{section}{Construction of the fundamental category and a proof of Theorem \ref{universalproperty}}\label{construction}
Let $K$ be a field of characteristic zero and let $(W,\{x_i\})$ be a finite dimensional structure over $K$.
We are going to construct the fundamental category $\C_W$ of $W$.
We will construct this category as a direct limit in the category of small additive symmetric monoidal rigid categories: $$\C_W=\varinjlim_n\C_n.$$
All the morphisms in $\C_n$ will have kernels and cokernels in $\C_{n+1}$
(this will ensure us that $\C_W$ will be abelian).
Moreover, for each $n$ we will have an additive faithful symmetric monoidal functor $F_n:\C_n\rightarrow Vec_K$,
and their limit will give us a functor $F:\C_W\rightarrow Vec_K$.

\begin{subsection}{Construction of the zeroth and the first category}
We will begin by constructing a pre-additive category, $\C_0$ together with an additive faithful symmetric monoidal functor $F_0:\C_0\rightarrow Vec_K$ 
(by a pre-additive category we mean here a category in which all the hom-sets are abelian groups,
and such that composition of morphsims is bilinear. 
An additive functor is a functor for which the induced map on the hom-sets is a homomorphism of abelian groups).
We then define $\C_1$ to be the additive envelope of $\C_0$.
(we will give later a definition of the additive envelope). We will show that $F_0$ extends to an additive faithful monoidal functor $F_1:\C_1\rightarrow Vec_K$.
The idea is that $\C_0$ will be the tensor category ``freely generated'' by $W$ and $W^*$, 
and the morphisms will be all the morphisms which arise from the tensors (or the ``structure'') $\{x_i\}$.
We begin with defining a collection of $\Q$-subspaces $X^{p,q}\subseteq W^{p,q}$ for every pair $(p,q)\in \mathbb{N}^2$. These subspaces will not necessarily be $K$-subspaces.
However, we will see that $\Q\subset X^{0,0}\subseteq K$ might be a proper intermediate field, and that $X^{p,q}$ will be a vector space over $X^{0,0}$ for every $(p,q)$.

\begin{definition}
 $X^{p,q}\subseteq W^{p,q}$ is the smallest collection of $\Q$-subspaces which satisfies the following conditions:\\
1. $x_i\in X^{p_i,q_i}$\\
2. The identity map $Id_W\in End_K(W)\cong W\otimes W^*$ is contained in $X^{1,1}$.\\
3. If $x\in X^{p,q}$ and $y\in X^{r,s}$ then $x\ot y\in X^{p+r,q+s}$ (after rearranging the factors).\\
4. If $ev_{p,q}:W^{p,q}\rightarrow W^{p-1,q-1}$ is the map which evaluates the first copy of $W^*$ on the first copy of $W$ then $ev(X^{p,q})\subseteq X^{p-1,q-1}$.\\
5. For any $\sigma\in S_p$ and $\tau\in S_q$ we have that $(\sigma,\tau)(X^{p,q})=X^{p,q}$, where the action is given by permuting the tensor factors.\\
6. If $0\neq x\in X^{0,0}\subseteq W^{0,0}=K$, then also $x^{-1}\in X^{0,0}$ (the inversion is made in $K$).
\end{definition}
As in the construction of $Ker(\C)_F$, it is clear that this collection exists, and it is clear how to construct it. 
We just start with the $\Q$-vector spaces generated by the $x_i$'s, and perform some closure operations.
Notice also that by Condition 6, $X^{0,0}$ is a subfield of $K$.

We now construct a pre-additive category $\C_0$ in the following way: the object set of our category will be $\mathbb{N}^2=\{(p,q)\}$.
The morphism groups will be $Hom_{\C_0}((p,q),(a,b))=X^{a+q,b+p}$. Composition is defined in the following way: we have
$X^{a+q,b+p}\subseteq W^{a+q,b+p}=Hom_K(W^{p,q},W^{a,b})$, so
for two morphisms $f:(p,q)\rightarrow (a,b)$ and $g:(a,b)\rightarrow (c,d)$ we can form the composition $gf:W^{p,q}\rightarrow W^{c,d}$ of maps of vector spaces.
By Conditions 3,4 and 5 we have that $gf\in X^{p+d,q+c}$, so this is well defined 
(we use here the fact that when we identify $Hom_K(U,V)$ with $V\ot U^*$ and $Hom_K(V,W)$ with $W\ot V^*$, the composition $(W\ot V^*)\ot (V\ot U^*)\ra W\ot U^*$ 
is given by the evaluation on $V^*\ot V$). 
Notice that by Condition 2 we have the identity maps, and that all the morphism groups are in fact vector spaces over $X^{0,0}$.
The category $\C_0$ is a symmetric rigid monoidal category, and we have a faithful symmetric monoidal additive functor $F_0:\C_0\rightarrow Vec_K$ given by
$F_0((p,q))= W^{p,q}$. 
Indeed, the tensor product of $(p,q)$ with $(a,b)$ will be $(p+a,q+b)$, and the dual of $(p,q)$ will be $(q,p)$. 
From the way we have defined the composition of morphisms it is clear that $F_0$ is really a functor.
The faithfulness of $F_0$ follows from the fact that we defined $Hom_{\C_0}((p,q),(a,b))$ as a subset of $Hom_K(W^{p,q},W^{a,b})$, 
so the induced map on the hom sets is indeed injective.

If $\D$ is any pre-additive category, we can form an additive category, $add(\D)$ (the \textit{additive envelope of $\D$}), 
by simply adding finite direct sums to $\D$ (objects of $add(\D)$
are $n$-tuples of objects of $\D$ (for some natural $n$), and morphisms are given by matrices of morphisms. 
See also the exercises in Section 6.2 of \cite{maclane}).
We have a natural faithful additive functor $I:\D\ra add(\D)$ given by sending each object to itself and each morphism to itself. 
We have the following universal property: if $\A$ is an additive category and $F:\D\ra \A$ is an additive functor 
then there exists a unique additive functor $add(F):add(\D)\ra\A$ such that $add(F)I\cong F$. 
The functor $add(F)$ is given by $add(F)(\oplus A_i) = \oplus F(A_i)$.
If $F$ is faithful then $add(F)$ is faithful as well. 

We define $\C_1=add(\C_0)$. Since $\C_0$ is a small category, $\C_1$ is still a small category (since we add only finite direct sums).
We thus have a natural inclusion functor $I_0:\C_0\ra \C_1$.
The category $\C_1$ is also a symmetric rigid monoidal category over $Vec_{X^{0,0}}$,
and the functor $I_0$ is a faithful symmetric monoidal additive functor. 
Moreover, the functor $F_0$ induces a functor $F_1=add(F_0):\C_1\ra Vec_K$. 
which is also a faithful symmetric monoidal additive functor, 
and we have a natural isomorphism of functors $F_0\cong F_1I_0$.
\end{subsection}

\begin{subsection}{Iterative kernel completions}
We will now use the construction of the previous section in order to add kernels and cokernels to $\C_1$.
In order to do so, we need to overcome a certain difficulty: if $f:A\rightarrow B$ is a morphism in $\C_1$,
then it is clear what should $Hom_{\C_1}(X,Ker(f))$ and $Hom_{\C_1}(Coker(f),Y)$ be,
just by their universal properties. 
However, it is not so clear what should $Hom_{\C_1}(Ker(f),X)$ and $Hom_{\C_1}(Y,Coker(f))$ be.
In order to overcome this obstruction, we will show that our category $\C_1$ is good in the sense of Section \ref{sec:ker-comp}. 
To do so, we will use the symmetric monoidal structure of the category.
We begin with proving the following linear algebra lemma:
\begin{lemma}
 Let $T:U\rightarrow V$ be a linear map between two finite dimensional vector spaces over $K$. Let $k$ be a positive integer.
 Then the image of the map: $$K_T:(V^*)^{\ot k}\ot U^{\ot k+1}\rightarrow U$$ $$f_1\ot f_2\ot\cdots \ot f_k\ot v_1\ot v_2\ot\cdots\ot v_{k+1}\mapsto$$
 $$ \sum_{\sigma\in S_{k+1}}(-1)^{\sigma}f_1(T(v_{\sigma(1)}))f_2(T(v_{\sigma(2)}))\cdots f_k(T(v_{\sigma(k)})) v_{\sigma(k+1)}$$ is equal to:\\
 1. $U$, in case $k<rank(T)$.\\
 2. $Ker(T)$ in case $k=rank(T)$ and \\
 3. 0 in case $k>rank(T)$.\\
\end{lemma}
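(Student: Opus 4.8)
The plan is to analyze the map $K_T$ directly in terms of a basis adapted to $T$. Since the statement is linear-algebraic and intrinsic, I would first reduce to a normal form: choose a basis $v_1,\dots,v_n$ of $U$ such that $v_{r+1},\dots,v_n$ span $\ker(T)$ (here $r=\mathrm{rank}(T)$), and such that $T(v_1),\dots,T(v_r)$ are linearly independent in $V$; extend to a basis $w_1,\dots,w_m$ of $V$ with $w_j = T(v_j)$ for $j\le r$, and let $f^1,\dots,f^m$ be the dual basis of $V^*$. Then $f^j(T(v_i)) = \delta_{ij}$ for $j\le r$ and all $i\le r$, while $f^j(T(v_i)) = 0$ whenever $i>r$. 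The key observation is that $K_T$ is visibly alternating in the $k+1$ vector arguments $v_1,\dots,v_{k+1}$ (because of the sign $(-1)^\sigma$ and the sum over $S_{k+1}$), so $K_T$ factors through $(V^*)^{\otimes k}\otimes \Lambda^{k+1}U$; it therefore suffices to evaluate $K_T$ on basis elements $f^{j_1}\otimes\cdots\otimes f^{j_k}\otimes v_{i_1}\wedge\cdots\wedge v_{i_{k+1}}$ with $i_1<\dots<i_{k+1}$.

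Next I would compute this evaluation. Expanding, $K_T(f^{j_1}\otimes\cdots\otimes v_{i_1}\wedge\cdots\wedge v_{i_{k+1}})$ is, up to sign, the signed sum over which of the $k+1$ chosen indices $i_1,\dots,i_{k+1}$ is "left over" as the output vector, with the remaining $k$ of them paired against $T$ and the functionals $f^{j_1},\dots,f^{j_k}$; this is precisely a $k\times k$ minor-type expression. Concretely, the coefficient of the output vector $v_{i_\ell}$ is $\pm\det\big(f^{j_a}(T(v_{i_b}))\big)_{a,b}$ where $b$ ranges over the $k$ indices other than $i_\ell$. Using the adapted basis, such a determinant is nonzero only when $\{j_1,\dots,j_k\} = \{i_b : b\neq \ell\}$ as subsets of $\{1,\dots,r\}$, in which case it is $\pm 1$. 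Running the three cases: if $k>r$, then among any $k+1$ indices $i_1<\dots<i_{k+1}$ at least $k+1-r\ge 2$ of them exceed $r$, so every $k\times k$ submatrix of $\big(f^{j_a}(T(v_{i_b}))\big)$ has a zero row or forces a zero column, hence $K_T\equiv 0$; this gives part 3. If $k=r$, then a nonzero contribution forces $\{i_1,\dots,i_{k+1}\} = \{1,\dots,r\}\cup\{i_{k+1}\}$ with $i_{k+1}>r$ and the leftover vector must be exactly $v_{i_{k+1}}\in\ker T$, so the image is contained in $\ker T$; conversely choosing $j_a = a$ and $i_b = b$ for $b\le r$ and $i_{k+1}$ any index $>r$ (such an index exists since $k=r<n$ unless $\ker T = 0$, in which case $\ker T$ is already the image trivially) produces $v_{i_{k+1}}$ up to sign, so the image is all of $\ker T$; this is part 2. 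If $k<r$, then for any target basis vector $v_i$ we may pick $k$ indices from $\{1,\dots,r\}\setminus\{i\}$ (possible since $r-1\ge k$ when $i\le r$, and since $r\ge k$ when $i>r$) together with matching functionals to get $\pm v_i$ in the image, so the image is all of $U$; this is part 1.

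The routine-but-attention-requiring step is keeping the signs and the combinatorics of "which index is left over" straight when expanding the $S_{k+1}$-sum into minors; I would handle it by fixing the alternating reduction first so that only increasing multi-indices appear, which collapses the bookkeeping to a single determinant per term. The main conceptual point — and the only place where something could go wrong — is the surjectivity claims in cases 1 and 2: one must exhibit, for each basis vector of the target ($U$, respectively $\ker T$), an explicit preimage, and check that the relevant $k\times k$ determinant of $\delta$-symbols is genuinely $\pm1$ rather than $0$; this is where the rank hypotheses $k<r$ and $k=r$ are used, since they guarantee enough independent columns $T(v_i)$ are available. Everything else is a direct unwinding of definitions.
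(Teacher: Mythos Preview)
Your proof is correct and follows essentially the same approach as the paper: choose a basis of $U$ adapted to $\ker T$, evaluate $K_T$ on basis tensors, use the alternating property in the $U$-arguments to reduce to distinct indices, and read off the image from which basis vectors can appear with nonzero coefficient. The only difference is cosmetic: you phrase the computation in terms of $k\times k$ minors and treat all three cases explicitly, whereas the paper writes out only case~2 and declares cases~1 and~3 easy to deduce.
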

\begin{proof}
We will concentrate on case number 2. The other cases are easy to deduce.
Assume that $k=rank(T)$.
Let us write $$U=span_K\{ u_1,u_2,\ldots, u_k,u_{k+1},\ldots, u_n\}$$ where $u_{k+1},\ldots, u_n$ span the kernel of $T$,and
$T(u_1),T(u_2),\ldots,T(u_k)$ are linearly independent in $V$.
Let $x = f_1\ot f_2\ot\cdots \ot f_k\ot u_{i_1}\ot u_{i_2}\ot\cdots\ot u_{i_{k+1}}$.
It is easy to see that $K_T(x)=0$ if $|\{i_1,\ldots,i_{k+1}\}|<k+1$.
Now, if two (or more) of the indices $\{i_j\}$ are bigger than $k$ then $K_T(x)=0$ again, because every element in the sum will be zero.
So the only possible way in which $K_T(x)\neq 0$, is if we have that $i_j>k$ for exactly one $j$.
Without loss of generality, let us assume that this is $j=k+1$.
Then we have that:
$$K_T(x) = \sum_{\sigma\in S_{k+1}}((-1)^{\sigma}f_1(T(u_{i_{\sigma(1)}}))f_2(T(u_{i_{\sigma(2)}}))\cdots f_k(T(u_{i_{\sigma(k)}})) u_{i_{\sigma(k+1)}} = $$
$$\sum_{\sigma\in S_{k}}(-1)^{\sigma}f_1(T(u_{i_{\sigma(1)}}))f_2(T(u_{i_{\sigma(2)}}))\cdots f_k(T(u_{i_{\sigma(k)}})) u_{i_{k+1}}\in Ker(T)$$
And by taking the right element from $(V^*)^{\ot k}$ we get all the elements of the kernel of $T$ this way.
Since every element in $(V^*)^{\ot k}\ot U^{\ot k+1}$ is a linear combination of elements of the form of $x$, we are done.
\end{proof}
\begin{remark}
Notice that a dual proof will reveal the fact that the image of $f$ in $V$ is also the kernel of some other morphism with source $V$.
Everything that we will prove in the sequel for kernel of morphisms can be dualize for cokernels.\end{remark}

An important consequence of the previous lemma is the fact that the construction of the map $K_T$
can be done inside our category $\C_1$ (or inside any additive symmetric monoidal rigid category $\C$ 
equipped with an additive faithful symmetric monoidal functor $F:\C\ra Vec_K$).
The reason for this is the following: if $T:U\ra V$ is a morphism in $\C$,
then $C:=(V^*)^{\ot k}\ot U^{\ot k+1}$ where $k=rank(F(f))$ is an object in $\C$.
The morphism $K_T$ is constructed out of the original map $T$ and the action of the symmetric group
(which exists in $\C$ since $\C$ is symmetric monoidal).
It can therefore be constructed in $\C$.
We record this fact in the following corollary:
\begin{corollary}\label{exactness}
Let $\C$ be an additive symmetric monoidal rigid category, and let $F:\C\ra Vec_K$ be a faithful
additive symmetric monoidal functor. 
Then the category $\C$ is good with respect to the functor $F$: any morphism $A\ra B$ in $\C$
can be embedded in a sequence $Y\ra A\ra B$ which becomes exact in $Vec_K$
after applying $F$.
\end{corollary}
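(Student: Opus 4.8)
The plan is to apply the preceding lemma with $T=F(f)$, after observing that the morphism $K_T$ it produces is assembled purely from $T$, the evaluation maps, the coherence isomorphisms, and the action of a symmetric group, so that it has a literal internal analogue in $\C$. Concretely, given a morphism $f:A\ra B$ in $\C$, set $k=rank(F(f))$, a well-defined non-negative integer. Using the monoidal and rigid structure of $\C$, form the object $C:=(B^*)^{\ot k}\ot A^{\ot(k+1)}$ and define a morphism $K_f:C\ra A$ by the same recipe as in the lemma: apply $f$ to $k$ of the $A$-factors, contract each resulting copy of $B$ against one of the $B^*$-factors via $ev_B$, leave the remaining $A$-factor as the output, and sum over the $(k+1)!$ signed ways of assigning the $A$-factors to these roles, i.e.\ over $S_{k+1}$ acting by permutation of tensor factors. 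Every ingredient of this formula ($f$, $ev_B$, the symmetry, the associativity and unit constraints, finite direct sums for the signed sum) is available in an arbitrary additive symmetric monoidal rigid category, so $K_f$ is a bona fide morphism of $\C$.

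Next I would check that $F$ transports this construction to the one in the lemma. Since $F$ is additive and symmetric monoidal, it commutes, up to the canonical isomorphisms $F(X\ot Y)\cong F(X)\ot F(Y)$ and $F(X^*)\cong F(X)^*$, with tensor products, duals, evaluation morphisms, direct sums, and the symmetry; hence under these identifications $F(K_f)$ is precisely the map $K_{F(f)}$ of the lemma. By part~2 of the lemma, applied with $k=rank(F(f))$, the image of $K_{F(f)}$ in $F(A)$ equals $Ker(F(f))$. Therefore the sequence $C\stackrel{K_f}{\ra} A\stackrel{f}{\ra} B$ in $\C$ becomes, after applying $F$, a sequence that is exact at $F(A)$, which is exactly the assertion that $\C$ is good with respect to $F$. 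Dualizing, via the Remark after the lemma, one embeds $f$ similarly in a sequence $A\ra B\ra Z$ that $F$ makes exact at $F(B)$, which is what is needed for the cokernel statements in the next section.

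The only delicate point is the bookkeeping in the two steps above: writing $K_f$ down precisely as a morphism of $\C$ while threading every factor through the associativity and unit constraints, and verifying that $F(K_f)$ equals $K_{F(f)}$ on the nose after the canonical identifications rather than merely up to some further isomorphism. Both are routine consequences of Mac Lane coherence and the compatibility axioms for a symmetric monoidal functor, but this is where essentially all of the (uninteresting) work lies; there is no conceptual obstacle, since the preceding lemma has already carried out the substantive linear-algebra computation.
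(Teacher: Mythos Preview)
Your proposal is correct and follows essentially the same approach as the paper: construct $K_f$ internally in $\C$ from $f$, the evaluation maps, and the $S_{k+1}$-action, take $k=rank(F(f))$, and observe that $F$ carries this construction to the map $K_{F(f)}$ of the preceding lemma, whose image is $Ker(F(f))$. Your write-up is in fact more detailed than the paper's, which simply notes that all ingredients of $K_T$ live in any additive symmetric monoidal rigid category and records the conclusion.
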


In particular, the category $\C_1$ is good with respect to the functor $F_1$. 
We form the kernel-completion of $\C_1$, and denote it by $\C_2$.
So $\C_2$ is also an additive symmetric monoidal rigid category, and we have additive faithful symmetric monoidal functors 
$I_1:\C_1\ra \C_2$ and $F_2:\C_2\ra Vec_K$ such that $F_2I_1\cong F_1$.
By the above corollary, $\C_2$ is good with respect to the functor $F_2$.
We define inductively $\C_{n+1}$ as the kernel completion of $\C_n$ (again, since $\C_n$ is an additive symmetric monoidal category
and the functor $F_n:\C_n\ra Vec_K$ is an additive faithful symmetric monoidal functor, $\C_n$ is good with respect to $F_n$ by the above corollary,
and therefore we can apply Theorem \ref{ker-comp}).
We then have a sequence of additive symmetric monoidal rigid categories $\C_n$ and additive faithful symmetric monoidal functors
$I_n:\C_n\ra \C_{n+1}$ and $F_n:\C_n\ra Vec_K$.
We define the fundamental category of $W$ to be the direct limit:
$$\C_W:= \varinjlim_n \C_n$$
The functors $F_n$ then induce a faithful symmetric monoidal functor $F:\C_W\ra Vec_K$.

We would like to show that $\C_W$ is in fact abelian, and that it satisfies a certain universal property.
We begin with the following lemma:
\begin{lemma}
If $f:A\ra B$ is a morphism in $\C_{n-1}$, then the objects $I_{n}(K(f))$ and $K(I_{n-1}(f))$ 
are canonically isomorphic in $\C_{n+1}$.
\end{lemma}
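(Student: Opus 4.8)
The plan is to produce a canonical morphism $\alpha:I_n(K(f))\ra K(I_{n-1}(f))$ in $\C_{n+1}$ and then to show that it is an isomorphism, by applying the faithful functor $F_{n+1}$ and using the fact that the hom-sets of a kernel completion are closed under inverses.

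First I would assemble the data available inside $\C_n=Ker(\C_{n-1})_{F_{n-1}}$. By Part 2 of Theorem \ref{ker-comp} we have the morphism $i_f:K(f)\ra I_{n-1}(A)$, and $F_n(i_f)$ is the canonical inclusion of $Ker(F_{n-1}(f))$ into $F_{n-1}(A)$; in particular $F_n\big(I_{n-1}(f)\circ i_f\big)=0$, and since $F_n$ is faithful this gives $I_{n-1}(f)\circ i_f=0$ in $\C_n$. Now I would apply Part 2 of Theorem \ref{ker-comp} to the kernel completion $\C_{n+1}=Ker(\C_n)_{F_n}$, to the morphism $I_{n-1}(f):I_{n-1}(A)\ra I_{n-1}(B)$ of $\C_n$, and to the morphism $g:=i_f:K(f)\ra I_{n-1}(A)$ of $\C_n$, which satisfies $I_{n-1}(f)g=0$: it yields a unique morphism $\alpha:I_n(K(f))\ra K(I_{n-1}(f))$ in $\C_{n+1}$ such that $i_{I_{n-1}(f)}\circ\alpha=I_n(i_f)$. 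This $\alpha$, by its uniqueness, is the candidate canonical isomorphism.

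Next I would compute $F_{n+1}(\alpha)$. Combining the coherence isomorphisms $F_{n+1}I_n\cong F_n$ and $F_nI_{n-1}\cong F_{n-1}$ with the identification $F_{n+1}\big(K(I_{n-1}(f))\big)=Ker\big(F_n(I_{n-1}(f))\big)\cong Ker(F_{n-1}(f))$, the equation $i_{I_{n-1}(f)}\circ\alpha=I_n(i_f)$ becomes, after applying $F_{n+1}$, the assertion that $F_{n+1}(\alpha):Ker(F_{n-1}(f))\ra Ker(F_{n-1}(f))$ followed by the canonical inclusion $Ker(F_{n-1}(f))\hookrightarrow F_{n-1}(A)$ equals that same canonical inclusion. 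Since the inclusion is a monomorphism, $F_{n+1}(\alpha)$ is the identity under these identifications, hence invertible in $Vec_K$. By Condition 3 of Definition \ref{def:homsets}, a morphism of $\C_{n+1}$ whose image under $F_{n+1}$ is invertible is itself invertible in $\C_{n+1}$; therefore $\alpha$ is an isomorphism. Canonicity (and indeed a form of naturality in $f$) is exactly the uniqueness clause in Part 2 of Theorem \ref{ker-comp}.

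The one step that requires genuine care is the bookkeeping with coherence isomorphisms in the third paragraph: one must verify that under the chain of identifications $F_{n+1}I_n\cong F_n$, $F_nI_{n-1}\cong F_{n-1}$ and $F_{n+1}(K(I_{n-1}(f)))\cong Ker(F_{n-1}(f))$, the two maps into $F_{n-1}(A)$ really are the same kernel inclusion. This is forced, though, since both describe the kernel of the single map $F_{n-1}(f)$ and $F_{n+1}(\alpha)$ is by construction compatible with the embeddings of these kernels into $F_{n-1}(A)$.
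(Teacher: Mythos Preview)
Your proof is correct and follows essentially the same approach as the paper: construct the canonical comparison map via the universal property of the kernel completion (Part 2 of Theorem \ref{ker-comp}), check that its image under $F_{n+1}$ is an isomorphism, and invoke the invertibility clause (Condition 3 of Definition \ref{def:homsets}) to conclude. The paper's proof is a terser version of the same argument; your additional care with the coherence isomorphisms and the explicit verification that $F_{n+1}(\alpha)$ is the identity simply fills in details that the paper leaves implicit.
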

\begin{proof}
in $\C_{n}$ we have a canonical map $i_f:K(f)\ra I_{n-1}(A)$ with $fi_f=0$.
By the construction of $\C_{n+1}$, as the kernel completion of $\C_n$, 
we have a canonical morphism $I_n(K(f))\ra K(I_{n-1}(f))$.
This morphism becomes invertible after applying $F_{n+1}$,
so by the construction of the kernel completion we know that it is invertible in $\C_{n+1}$ and therefore in $\C_W$.
\end{proof}
We will identify henceforth $I_n(K(f))$ with $K(I_{n-1}(f))$ via this canonical isomorphism,
and we will simply denote it by $K(f)$. 
More generally, since all the functors $I_n$ are faithful, we will consider them as identifications,
and we will consider all the objects and morphisms in $\C_n$ as objects and morphisms in $\C_W$. 

We next claim that $\C_W$ has all kernels and cokernels and that it is in fact abelian:
\begin{lemma} The object $K(f)$ is the kernel of $f$ in $\C_W$.
Similarly, $f$ has a cokernel in $\C_W$.
\end{lemma}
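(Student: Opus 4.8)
The plan is to check the universal property of the kernel directly, after pushing the given data down to a finite stage $\C_m$ so that Part 2 of Theorem \ref{ker-comp} becomes applicable. Two facts come essentially for free. First, $F(i_f)$ is, up to isomorphism, the inclusion of $Ker(F(f))$ into $F(A)$ — this is the exactness statement in Part 2 of Theorem \ref{ker-comp} — so $F(fi_f)=0$, and hence $fi_f=0$ in $\C_W$ by faithfulness of $F:\C_W\ra Vec_K$; in particular $F(i_f)$ is a monomorphism in $Vec_K$. Second, $\C_W$ is additive (a filtered colimit of additive categories along additive functors), so that the phrase ``kernel of $f$'' makes sense.

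Now let $g:C\ra A$ be a morphism in $\C_W$ with $fg=0$. Since $\C_W=\varinjlim_n\C_n$ and the $I_n$ are faithful, there is an index $m\ge n+1$ such that $C$, $g$ and $f$ all live in $\C_m$ and the relation $fg=0$ already holds in $\C_m$ (equality of morphisms in a filtered colimit is attained at a finite stage). Applying Part 2 of Theorem \ref{ker-comp} to the kernel completion $\C_{m+1}=Ker(\C_m)_{F_m}$ yields a morphism $g':I_m(C)\ra K_{\C_{m+1}}(f)$, unique in $\C_{m+1}$, with $i_fg'=I_m(g)$. By the preceding lemma, applied repeatedly, the object $K_{\C_{m+1}}(f)$ is canonically identified with the fixed object $K(f)$, and this identification is compatible with the maps $i_f$ (it is the unique isomorphism matching the two universal properties). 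Under these identifications $g'$ becomes a morphism $C\ra K(f)$ in $\C_W$ with $i_fg'=g$, which is the required factorization. For uniqueness, if $h:C\ra K(f)$ also satisfies $i_fh=g$, then $F(i_f)F(h)=F(g)=F(i_f)F(g')$; cancelling the monomorphism $F(i_f)$ gives $F(h)=F(g')$, and faithfulness of $F$ gives $h=g'$. Hence $(K(f),i_f)$ is the kernel of $f$ in $\C_W$.

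For the cokernel I would dualize. The category $\C_W$ is rigid, so it carries a contravariant duality functor $(-)^*$ with natural isomorphisms $X\cong X^{**}$ and $(\alpha^*)^*\cong\alpha$ on morphisms. Applying the kernel statement just proved to $f^*:B^*\ra A^*$ produces its kernel $j:Ker(f^*)\ra B^*$, and a short diagram chase — dualize the equation $f^*j=0$ and the universal property of $Ker(f^*)$, using only that $(-)^*$ is a contravariant functor together with the double-dual isomorphisms — shows that $B\cong B^{**}\xrightarrow{\,j^*\,}Ker(f^*)^*$ is a cokernel of $f$. This is exactly the observation recorded in the remark at the end of Section \ref{sec:ker-comp}.

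The step I expect to be the main obstacle is the bookkeeping in the middle paragraph: one must ensure that the morphism $g'$ produced at the finite stage $\C_{m+1}$ is, up to the standing identifications $I_n(K(f))\cong K(I_{n-1}(f))$, independent of the choice of $m$, and that the inclusion $i_f$ is genuinely carried to $i_f$ under those identifications. The reason a descent to a finite stage is forced on us is that the universal property in Part 2 of Theorem \ref{ker-comp} only quantifies over morphisms $g$ whose source is an object of $\C_m$ itself, not over arbitrary morphisms of the colimit; once $g$ has been pushed down to some $\C_m$, the whole argument takes place at a finite stage and Theorem \ref{ker-comp} supplies everything.
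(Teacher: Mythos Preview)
Your proof is correct and follows essentially the same approach as the paper's: push $f$ and $g$ down to a finite stage $\C_m$, invoke Part~2 of Theorem~\ref{ker-comp} for $\C_{m+1}=Ker(\C_m)_{F_m}$, and dualize via rigidity for the cokernel. You are more careful than the paper about two points it glosses over---the explicit verification that $fi_f=0$, and the fact that uniqueness of $g'$ must hold in $\C_W$ rather than merely at the finite stage (which you handle cleanly via faithfulness of $F$ and the monicity of $F(i_f)$).
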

\begin{proof}
Assume that $g:C\ra A$ is a morphism in $\C_W$ such that $fg=0$.
We can consider $g$ and $f$ as morphisms in $\C_n$ for some $n$.
Then, since $\C_{n+1}$ is the kernel completion of $\C_n$,
we will have a unique morphism $g':C\ra K(f)$ in $\C_{n+1}$ 
(and therefore in $\C_W$) such that $i_fg'=g$.
For the cokernel statment, we use the fact that $\C_W$ is rigid, and following the lines of the proof of Theorem \ref{ker-comp},
we prove that the cokernel of $f$ is $K(f^*)^*$.
\end{proof}
Since we have kernels and cokernels in $\C_W$ we will write freely $Ker(f)$ and $Coker(f)$ from now on.
By the way we have defined the functors $F_n$ we get automatically that $F(Ker(f))=Ker(F(f))$.
By using rigidity we also have $F(Coker(f)) = Coker(F(f))$.
By the construction of $\C_W$ we know that if $f$ is a morphism in $\C_n$ such that $F_n(f)$ is invertible in $Vec_K$,
then $I_n(f)$ is invertible in $\C_{n+1}$. Therefore, every morphism $f$ in $\C_W$ for which $F(f)$ is invertible is already invertible in $\C_W$.
The next lemma proves that the category $\C_W$ is abelian:
\begin{lemma}\label{Fexact}
Let $\A$ be an additive category, let $\B$ be an abelian category and let $F:\A\ra \B$ be an additive faithful functor.
Assume that the category $\A$ has kernels and cokernels, that $F$ preserves kernels and cokernels, and that if
$F(f)$ is an isomorphism in $\B$ for some morphism $f$ in $\A$, then $f$ is already an isomorphism in $\A$.
Then $\A$ is an abelian category, and $F$ is exact.
\end{lemma}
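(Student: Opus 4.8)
The plan is to verify the Freyd--Mitchell-style axioms for an abelian category one at a time, leaning throughout on the faithfulness of $F$ and on the hypothesis that $F$ reflects isomorphisms. First I would record the standing reductions: an additive category with kernels and cokernels is abelian precisely when every monomorphism is the kernel of its cokernel and every epimorphism is the cokernel of its kernel. Since $F$ is faithful and additive, it reflects zero morphisms, hence it reflects monomorphisms and epimorphisms (if $F(f)$ were, say, not monic this is detected by a nonzero morphism composing to zero, which pulls back). Conversely, $F$ preserves kernels and cokernels by hypothesis, so it preserves monomorphisms and epimorphisms as well. This two-way transfer is the engine of the whole argument.

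The key steps, in order. (1) Show that in $\A$ every morphism $f:A\to B$ factors as $A\twoheadrightarrow I\hookrightarrow B$ with $I=\mathrm{Coker}(\ker f)=\mathrm{Ker}(\mathrm{coker}\, f)$: form $c:\mathrm{Coim}(f)\to\mathrm{Im}(f)$ the canonical comparison morphism (which exists because $f$ kills $\ker f$ and factors through $\mathrm{coker}\,f$), apply $F$, and use that in the abelian category $\B$ the comparison morphism $F(\mathrm{Coim}(f))\to F(\mathrm{Im}(f))$ is an isomorphism (using $F(\ker)=\ker F$, $F(\mathrm{coker})=\mathrm{coker}\,F$); then the hypothesis that $F$ reflects isomorphisms gives that $c$ itself is an isomorphism in $\A$, yielding the desired factorization. (2) Deduce that every monic $f$ equals $\ker(\mathrm{coker}\, f)$: if $f$ is monic then $\ker f=0$, so $\mathrm{Coim}(f)=A$, and step (1) identifies $A\to B$ with $A\to\mathrm{Im}(f)=\mathrm{Ker}(\mathrm{coker}\, f)\hookrightarrow B$, the first arrow being the iso $c$; dually for epis. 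This establishes that $\A$ is abelian. (3) For exactness of $F$: $F$ is additive and preserves kernels and cokernels by hypothesis, and any kernel- and cokernel-preserving additive functor between abelian categories is exact (it preserves the image factorization, hence takes short exact sequences to short exact sequences, hence is exact).

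The main obstacle is step (1): one must be careful that the canonical comparison morphism $c:\mathrm{Coim}(f)\to\mathrm{Im}(f)$ is actually available in $\A$ before we know $\A$ is abelian — it is, because it is defined purely by the universal properties of kernel and cokernel (the composite $A\to B$ kills $\ker f$ so factors through $\mathrm{Coim}(f)=\mathrm{Coker}(\ker f)$, and this factored map composed into $\mathrm{coker}\, f$ is zero so it factors through $\mathrm{Im}(f)=\mathrm{Ker}(\mathrm{coker}\, f)$), and then the real content is that $F(c)$ is an isomorphism in $\B$, for which we invoke that $\B$ is abelian together with the compatibility $F(\ker)=\ker(F)$, $F(\mathrm{coker})=\mathrm{coker}(F)$. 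Once $F(c)$ is an iso in $\B$, the hypothesis on $F$ reflecting isomorphisms finishes it. The rest is formal diagram-chasing that I would not spell out in detail.
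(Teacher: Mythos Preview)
Your proposal is correct and follows essentially the same route as the paper: the paper also shows that the canonical comparison map (in its version, $X\to\mathrm{Ker}(\mathrm{Coker}(f))$ for a monomorphism $f$, which is your $c$ specialized to the case $\ker f=0$) becomes an isomorphism after applying $F$ and then invokes the reflection hypothesis, and it proves exactness of $F$ by the same image-preservation argument you give in step~(3). Your step~(1) is slightly more uniform in treating the coimage--image comparison for arbitrary $f$ before specializing, but this is a cosmetic difference rather than a different strategy.
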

\begin{proof}
In order to show that $\A$ is abelian, we need to prove that for a monomorphism $f:X\ra Y$ in $\A$ the induced morphism $X\ra Ker(Coker(f))$ is an isomorphism,
 and that if $g:X\ra Y$ is an epimorphism in $\A$ the induced morphism $Coker(Ker(g))\ra Y$ is an isomorphism.
 We will prove only the first statement, the proof of the second statement is similar.
Assume then that $f:X\ra Y$ is a monomorphism. This implies that $Ker(f)=0$ and therefore $Ker(F(f)) \cong F(Ker(f))=0$ and so $F(f)$ is also a monomorphism.
The natural map $\phi: X\ra Ker(Coker(f))$ becomes an isomorphism after applying $F$ and after applying the identification 
$F(Ker(Coker(f))) = Ker(F(Coker(f))) = Ker(Coker(F(f)))$. By the assumption on $F$ this implies that the morphism $\phi$ is an isomorphism.

In order to show that $F$ is exact, let $X\stackrel{f}{\ra} Y\stackrel{g}{\ra} Z$ be an exact sequence in $\A$.
Exactness means that $Im(f) = Ker(g)$. But since $Im(f) = Ker(Coker(f))$, we see that $F$ preserves also images. We thus have
$Im(F(f)) = F(Im(f)) = F(Ker(g)) = Ker(F(g))$ and so the sequence $F(X)\stackrel{F(f)}{\ra} F(Y)\stackrel{F(g)}{\ra} F(Z)$ 
is exact in $\B$ and $F$ is an exact functor.
\end{proof}
So $\C_W$ is an abelian symmetric monoidal rigid category and $F:\C_W\ra Vec_K$ is a faithful additive symmetric monoidal exact functor. 
The category $\C_W$ contains the structure $\wb{W}=(1,0)$ and we have $F(\wb{W}) = W$. For every tensor $x_i$ we have a morphism
$\wb{x_i}\in Hom_{\C_W}(\one, W^{p_i,q_i})$ such that $F(\wb{x_i}) = x_i$ (we identify $Hom_K(K,W^{p_i,q_i})$ with $W^{p_i,q_i}$).
We would like to show that $(\C_W,F,\wb{W})$ is universal with respect to these properties. 
We recall here Theorem \ref{universalproperty}.\\

\noindent\textbf{Theorem \ref{universalproperty}}\textit{
Let $K\subseteq T$ be an extension field, let $\A$ be an abelian symmetric monoidal rigid category, and let $G:\A\ra Vec_T$ 
be an exact additive faithful symmetric monoidal functor.
Assume that there is a structure $Z$ in $\A$ such that 
the structures $G(Z)$ and $W\ot_K T$ are isomorphic.
Then there exists a unique (up to isomorphism) exact faithful symmetric monoidal functor $\ti{F}:\C_W\ra \A$ 
such that $\ti{F}(\wb{W}) = Z$ (where equality here means equality of structures), 
and such that $G\ti{F}\cong i_{K,T}F$ where $i_{K,T}:Vec_K\ra Vec_T$ is the extension of scalars functor. }\\
\begin{proof}
Notice first that since $\A$ is rigid, tensor product with a given object in $\A$ is an exact functor.
Also, since $\A$ is abelian and $G:\A\ra Vec_T$ is exact and faithful, a morphism $f:A\ra B$ is invertible
if and only if $G(f)$ is invertible in $Vec_T$.

We shall construct the functor $\ti{F}$ step by step, starting from $\C_0$.
So for an object $(a,b)$ of $\C_0$ we define $\ti{F}((a,b)) = Z^{a,b}$ (since we would like $\ti{F}$ to be a monoidal functor,
this is the only possible definition).
We denote the tensors of the structure $W$ by $\{x_i\}$ and of the structure $Z$ by $\{y_i\}$.
The morphisms in $\C_0$ are constructed from the tensors $x_i$, 
the action of the symmetric group, pairing of $W$ and $W^*$ and concatenation of morphisms.
The same operations can also be done inside $\A$ using the object $Z$ and the morphisms $y_i$.
This implies that the image of $Hom_{\C_0}((a,b),(c,d))\ra Hom_K(W^{a,b},W^{c,d})$ is contained 
in the image of $Hom_{\A}(Z^{a,b},Z^{c,d})\ra Hom_K(G(Z^{a,b}),G(Z^{c,d})) = Hom_K(W^{a,b},W^{c,d})$.
We can apply now Lemma \ref{functorsdiagram} to the diagram
$$\xymatrix{ & \A\ar[d]^G \\ \C_0\ar[r]^{F_0} & Vec_T}$$
to define the action of $\ti{F}$ on morphisms in $\C_0$ (the isomorphism $\phi$ is given by $G(\ti{F}((a,b)) = G(Z^{a,b}) \cong W^{a,b} = F((a,b))$).
We also see that this functor is uniquely defined up to an isomorphism. 
We then extend uniquely the functor $\ti{F}$ to $\C_1$ by the universal property of additive envelopes.
The functors $\ti{F}$ extends uniquely to a functor $\C_2\ra \A$ and more generally to a functor $\C_n\ra \A$ for every $n$ 
by the universal property which appears in Theorem \ref{ker-comp}.
Since $\C_W$ is the direct limit of the categories $\C_n$ we get a unique faithful additive symmetric monoidal functor $\ti{F}:\C_W\ra \A$. 
Since this functor carries kernels to kernels, cokernels to cokernels, and satisfies that $\ti{F}(f)$ is an isomorphism if and only if $f$ is an isomorphism,
Lemma \ref{Fexact} shows that $\ti{F}$ is exact. 
\end{proof}
We finish this subsection with the following definition, which will be useful later:
\begin{definition}
We write $K_0=End_{\C_W}(\one)$. This is a subfield of $K$ which we call
the field of invariants of $(W,\{x_i\})$.
\end{definition}
\begin{remark}
There are different ways to construct an abelian category as an envelope of an additive category. 
See for example the paper \cite{Bel} and the Universal Property 2.10 in \cite{Krause}.
The construction we present here relies heavily on the presence of the functor $F$, 
and the fact that it gives us a way to interpret the objects and morphisms in $\C_1$ as objects and morphisms in the abelian category $Vec_K$,
and is therefore quite different from the constructions in \cite{Krause} and in \cite{Bel}, which do not use such a functor.
\end{remark}
\end{subsection}
\end{section}

\begin{section}{Field extensions and some basic properties of $\C_W$}\label{properties}
We thus have a $K_0$-linear rigid symmetric monoidal category $\C_W$ attached to the structure $W$.
This category is an invariant of the isomorphism type of $W$. 
We will next show that the category does not change when we take field extensions of the base field $K$,
and therefore structures which are forms of one another will have equivalent fundamental categories.
\begin{lemma}\label{extension}
Let $T$ be an extension field of $K$ and let $W$ be a structure defined over $K$.
We have a natural equivalence of symmetric monoidal $K_0$-linear categories $G:\C_{W\ot_K T}\ra \C_W$
between the fundamental category of $W$ and the fundamental category of the extension of scalars $W\ot_K T$.
Moreover, if $F_W:\C_W\rightarrow Vec_K$ and $F_{W\ot_K T}:\ti{\C}\rightarrow Vec_T$ are the two monoidal functors, and $i_{K,T}:Vec_K\rightarrow Vec_T$ 
is the extension of scalars functor (given by $i_{K,T}(V) = V\ot_K T$),
then we have a natural isomorphism of functors $i_{K,T}F_W\cong F_{W\ot_K T}G$
\end{lemma}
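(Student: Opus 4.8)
The plan is to produce the equivalence $G:\C_{W\ot_K T}\ra \C_W$ by invoking the universal property of Theorem \ref{universalproperty} twice, once in each direction, and then to recognize the two composites as identities by the uniqueness clause of the same theorem. First I would set up the data needed to apply Theorem \ref{universalproperty}. Observe that $\C_W$ is an abelian symmetric monoidal rigid category equipped with the exact faithful symmetric monoidal functor $F_W:\C_W\ra Vec_K$, and that composing with $i_{K,T}:Vec_K\ra Vec_T$ gives an exact faithful symmetric monoidal functor $G_0:=i_{K,T}F_W:\C_W\ra Vec_T$ (exactness of $i_{K,T}$ is clear since $T$ is flat over $K$, in fact free; faithfulness is clear). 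The object $\wb{W}\in\C_W$ is a structure with $G_0(\wb{W}) = W\ot_K T = (W\ot_K T)$, so taking the role of $Z$ to be $\wb{W}$ and the ambient field to be $T$, the hypotheses of Theorem \ref{universalproperty} applied to the structure $W\ot_K T$ over $T$ are satisfied. Hence there is a unique (up to isomorphism) exact faithful symmetric monoidal functor $G:\C_{W\ot_K T}\ra \C_W$ with $G(\wb{W\ot_K T}) = \wb{W}$ and $F_W\, G\cong i_{K,T}F_{W\ot_K T}$; this last isomorphism is precisely the stated compatibility $i_{K,T}F_W\cong F_{W\ot_K T}G$ rewritten, so that part of the lemma comes for free once $G$ is produced. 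Note also that $G$ is automatically $K_0$-linear, since $K_0 = End_{\C_W}(\one)$ and likewise the field of invariants of $W\ot_K T$ maps into $K_0$; one should check these two fields of invariants actually coincide, which follows because the two categories will turn out equivalent — or, more directly, because $\Q$-subspaces $X^{p,q}$ defining $\C_0$ for $W$ sit inside $W^{p,q}$ and for $W\ot_K T$ inside $(W\ot_K T)^{p,q} = W^{p,q}\ot_K T$, and the closure conditions involve only the structure tensors $x_i$, which are the same, so $X^{0,0}$ is unchanged.

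Next I would construct a functor in the opposite direction. Here the subtlety is that $\C_{W\ot_K T}$ is built over the field $T$ (its fiber functor lands in $Vec_T$), whereas $\C_W$ has its fiber functor in $Vec_K$, so I cannot directly plug $\C_W\to Vec_K$ into Theorem \ref{universalproperty} for the structure $W\ot_K T$. Instead I would apply the universal property of $\C_W$ (which is a statement about extensions $K\subseteq T'$): take $T' = T$, take $\A = \C_{W\ot_K T}$, take $G = F_{W\ot_K T}:\C_{W\ot_K T}\ra Vec_T$, and take $Z = \wb{W\ot_K T}\in\C_{W\ot_K T}$, which satisfies $F_{W\ot_K T}(\wb{W\ot_K T}) = W\ot_K T$, i.e. $G(Z)\cong W\ot_K T$ as structures. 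The hypotheses of Theorem \ref{universalproperty} for the structure $W$ over $K$ are then met, yielding a unique (up to isomorphism) exact faithful symmetric monoidal functor $G':\C_W\ra\C_{W\ot_K T}$ with $G'(\wb{W}) = \wb{W\ot_K T}$ and $F_{W\ot_K T}\,G'\cong i_{K,T}F_W$.

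Finally I would show $G$ and $G'$ are mutually inverse up to isomorphism, hence that $G$ is an equivalence. Consider $G\,G':\C_W\ra\C_W$. It is an exact faithful symmetric monoidal functor; it sends $\wb{W}$ to $G(\wb{W\ot_K T}) = \wb{W}$ (as structures, since both $G$ and $G'$ respect the structure tensors on the nose); and $i_{K,T}F_W\,(G\,G')\cong F_{W\ot_K T}\,G'\cong i_{K,T}F_W$, which since $i_{K,T}$ is faithful and exact forces $F_W\,(G\,G')\cong F_W$. But the identity functor $\mathrm{Id}_{\C_W}$ has exactly these same properties relative to the universal characterization of $(\C_W,\wb{W},F_W)$ — apply Theorem \ref{universalproperty} with $T' = K$, $\A = \C_W$, $G = F_W$, $Z = \wb{W}$ — so by the uniqueness clause $G\,G'\cong\mathrm{Id}_{\C_W}$. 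Symmetrically, $G'\,G:\C_{W\ot_K T}\ra\C_{W\ot_K T}$ is an exact faithful symmetric monoidal $T$-linear functor fixing $\wb{W\ot_K T}$ and compatible with $F_{W\ot_K T}$, and by the uniqueness in the universal property of $\C_{W\ot_K T}$ (Theorem \ref{universalproperty} applied to the structure $W\ot_K T$ over the trivial extension $T\subseteq T$) we get $G'\,G\cong\mathrm{Id}_{\C_{W\ot_K T}}$. Hence $G$ is an equivalence of categories, it is $K_0$-linear and symmetric monoidal, and the compatibility $i_{K,T}F_W\cong F_{W\ot_K T}G$ was already recorded. The main obstacle in all of this is purely bookkeeping: making sure that "structure" (as opposed to merely "object") is preserved strictly enough that the uniqueness clauses of Theorem \ref{universalproperty} apply to the composites $G\,G'$ and $G'\,G$, and that the base fields are tracked correctly when invoking the universal property in the direction from $\C_W$ into a category whose fiber functor lands in $Vec_T$ rather than $Vec_K$.
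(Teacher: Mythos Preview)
Your approach is essentially identical to the paper's: produce $G$ via the universal property of $\C_{W\ot_K T}$ (with target $\C_W$ equipped with $i_{K,T}F_W:\C_W\ra Vec_T$), produce the inverse via the universal property of $\C_W$, and identify both composites with the identity by uniqueness. One small point: the step ``since $i_{K,T}$ is faithful and exact forces $F_W(GG')\cong F_W$'' is not quite justified---faithfulness of $i_{K,T}$ does not let you cancel it from a natural isomorphism---but you can sidestep this entirely by invoking the uniqueness clause of Theorem~\ref{universalproperty} for the extension $K\subseteq T$ (rather than $K\subseteq K$), where the needed compatibility $i_{K,T}F_W(GG')\cong i_{K,T}F_W$ is already in hand.
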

\begin{proof}
We consider the functor $F'=i_{K,T}F_W:\C_W\ra Vec_T$. The universal property of $\C_{W\ot_K T}$ gives us a
faithful symmetric monoidal additive functor $G:\C_{W\ot_K T}\ra \C_W$ such that $i_{K,T}F_W\cong F_{W\ot_K T}G$ and such that $G(\wb{W\ot_K T}) = \wb{W}$.
The universal property of $\C_W$ gives us a functor $H:\C_W\ra \C_{W\ot_K T}$ which satisfies $H(\wb{W}) = \wb{W\ot_K T}$. 
by using the universal property again we get that $GH\cong Id_{\C_W}$ and $HG\cong Id_{\C_{W\ot_K T}}$.
\end{proof}
\begin{corollary}\label{descent}
Let $W,K,T$ be as above. 
Assume that $L\subseteq T$ is another subfield and that $W\ot_K T$ has a form $W'$ over $L$.
Then the fundamental categories $\C_W$ and $\C_{W'}$ are equivalent, and $K_0\subseteq L$.
\end{corollary}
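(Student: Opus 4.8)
The plan is to chain two instances of Lemma~\ref{extension}, one for the tower $K\subseteq T$ and one for the tower $L\subseteq T$, and then to read off the statement about fields of invariants by tracking $End(\one)$ through the resulting equivalences while keeping everything embedded in the single ambient field $T$. First I would unpack the hypothesis: since $W'$ is a structure over $L$ and $L\subseteq T$, the assertion that $W'$ is a form of $W\ot_K T$ over $L$ means exactly that there is an isomorphism of structures $W'\ot_L T\cong W\ot_K T$ over $T$. Because $\C_{(-)}$ is an invariant of the isomorphism type of a structure, and this invariance is compatible with the canonical fibre functors to $Vec_T$, this isomorphism yields an equivalence of symmetric monoidal categories $\C_{W'\ot_L T}\simeq\C_{W\ot_K T}$ intertwining $F_{W'\ot_L T}$ and $F_{W\ot_K T}$.

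Next I would apply Lemma~\ref{extension} to the extension $K\subseteq T$, obtaining an equivalence $G\colon\C_{W\ot_K T}\to\C_W$ with $i_{K,T}F_W\cong F_{W\ot_K T}G$, and apply the same lemma to the extension $L\subseteq T$, obtaining an equivalence $G'\colon\C_{W'\ot_L T}\to\C_{W'}$ with $i_{L,T}F_{W'}\cong F_{W'\ot_L T}G'$. Composing the three equivalences gives
\[
\C_W\;\simeq\;\C_{W\ot_K T}\;\simeq\;\C_{W'\ot_L T}\;\simeq\;\C_{W'},
\]
which is the first assertion of the corollary.

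For the claim $K_0\subseteq L$, I would pass to endomorphism rings of the unit object, viewing each such ring as a subfield of $T$ through the corresponding fibre functor (a fibre functor sends $\one$ to the base field and identifies $End(\one)$ with a subfield of that base field). The equivalence $G$ together with $i_{K,T}F_W\cong F_{W\ot_K T}G$ identifies $End_{\C_{W\ot_K T}}(\one)$, as a subfield of $T$, with $K_0=End_{\C_W}(\one)\subseteq K\subseteq T$. Likewise $G'$ identifies $End_{\C_{W'\ot_L T}}(\one)$, as a subfield of $T$, with $End_{\C_{W'}}(\one)$, which by the definition of the field of invariants is a subfield of $L$, hence of $T$. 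Finally the middle equivalence, being compatible with $F_{W\ot_K T}$ and $F_{W'\ot_L T}$, identifies $End_{\C_{W\ot_K T}}(\one)$ with $End_{\C_{W'\ot_L T}}(\one)$ as subfields of $T$. Stringing these identifications together gives $K_0=End_{\C_{W'}}(\one)\subseteq L$ inside $T$, as claimed.

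The step that requires genuine care is exactly this last one: the equivalences are useful here only because Lemma~\ref{extension} (and the isomorphism-invariance of $\C_{(-)}$) produce equivalences that intertwine the fibre functors, so that the induced isomorphisms among the various copies of $End(\one)$ are equalities of subfields of the common field $T$ rather than merely abstract field isomorphisms. Without this compatibility one could only conclude that $K_0$ is abstractly isomorphic to some subfield of $L$, not that $K_0\subseteq L$.
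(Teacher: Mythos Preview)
Your argument is correct and follows essentially the same route as the paper: both $\C_W$ and $\C_{W'}$ are shown to be equivalent to the fundamental category of $W\ot_K T\cong W'\ot_L T$ via Lemma~\ref{extension}, and then $K_0$ is identified with $End_{\C_{W'}}(\one)\subseteq L$ by tracking endomorphism rings of $\one$ as subfields of $T$. Your explicit emphasis on the compatibility with the fibre functors (so that the resulting isomorphisms of $End(\one)$ are genuine equalities of subfields of $T$ rather than abstract field isomorphisms) is a point the paper glosses over, and is exactly the care needed to make the argument rigorous.
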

\begin{proof}
We assume that $W'\ot_{L} T\cong W\ot_K T$. 
Let $\ti{\C}$ be the fundamental category of $W\ot_K T$.
Then we have seen that both $\C_W$ and $\C_{W'}$ are equivalent to $\ti{\C}$.
If we write $K_0=End_{\C_W}(\one)$ and $K_1=End_{\C_{W'}}(\one)$ and $K_2 = End_{\ti{\C}}(\one)$ then
the equivalences between the categories shows that all three subfields $K_0,K_1,K_2$ 
are the same subfield of $T$, and in particular $K_0=K_1\subseteq L$.
\end{proof}	
The field of invariants $K_0$ is going to be a relatively small field in most cases.
More precisely, we claim the following lemma:
\begin{lemma}
If the set $\{x_i\}$ of structure-tensors $W$ is finite, 
then the transcendental degree of $K_0$ over $\Q$ is finite. In particular- $K_0$ is countable.
\end{lemma}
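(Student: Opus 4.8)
The plan is to trace through the construction of $\C_W$ and show at each stage that only finitely many "new" scalars are ever created, so that the field $X^{0,0}$ generated at level zero already governs the transcendence degree, and the passage to kernel completions and the direct limit cannot increase it. More precisely, recall that $K_0 = End_{\C_W}(\one)$, and that $\one = I(\one)$ comes from $\C_1 = add(\C_0)$, where $\one$ is the object $(0,0)$ of $\C_0$. Since the functors $I_n$ are faithful and we have identified $End_{\C_n}(\one)$ with a subfield of $End_{\C_W}(\one) = K_0 \subseteq K$, and since at each kernel-completion stage the new hom-sets $X^{f,g}$ are built (by Definition \ref{def:homsets}) out of morphisms already present in $\C_n$ together with composition and inversion inside $\A = Vec_K$, no genuinely new endomorphism of $\one$ is produced: an endomorphism of $\one$ in $Ker(\C_n)_{F_n}$ lies in $X^{0_\one, 0_\one} \subseteq Hom_{Vec_K}(K,K) = K$, and by Conditions 1--3 of Definition \ref{def:homsets} it is obtained from endomorphisms of $\one$ already in $\C_n$ by the field operations. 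Hence $End_{\C_W}(\one) = End_{\C_1}(\one) = X^{0,0}$.

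So the statement reduces to showing that $X^{0,0}$ has finite transcendence degree over $\Q$ when the set $\{x_i\}$ is finite. The idea is to bound the "complexity" of the scalars appearing in $X^{0,0}$. First I would observe that $X^{0,0}$ is the subfield of $K$ generated over $\Q$ by all scalars obtainable from the finitely many tensors $x_i \in W^{p_i,q_i}$ by the closure operations of the definition: tensor products, evaluations $ev_{p,q}$, symmetric-group actions, and field inversion. Fix a $\Q$-basis (or even just a finite $\Q$-spanning set) for the finite-dimensional $\Q$-span $V$ of all $x_i$ together with $Id_W$; then every element of $X^{p,q}$ lies in the $X^{0,0}$-span of finitely many "monomials" built from these generators by tensoring and contracting, for each fixed $(p,q)$. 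The point is that the only way a transcendental element can enter $X^{0,0}$ is as an entry (structure constant) of one of these contracted tensors, expressed in a fixed $K$-basis of $W$. All such entries are polynomials with $\Q$-coefficients in the finitely many structure constants of the $x_i$ and $Id_W$ in that basis. Therefore every element of $X^{0,0}$ obtained before any inversion lies in the finitely generated $\Q$-algebra $R$ generated by these structure constants; applying Condition 6 (inversion in $K$) only takes us into the fraction field of $R$, which still has finite transcendence degree over $\Q$; and iterating all the closure operations never enlarges this fraction field, since tensoring and contracting polynomial expressions with coefficients in $\mathrm{Frac}(R)$ stays within $\mathrm{Frac}(R)$. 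Hence $X^{0,0} \subseteq \mathrm{Frac}(R)$, which has finite transcendence degree over $\Q$; and a field of finite transcendence degree over $\Q$ is countable, giving the final assertion.

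The main obstacle I expect is bookkeeping the closure operations carefully enough to see that no operation escapes the finitely generated field $\mathrm{Frac}(R)$ — in particular one must be a little careful that the smallest collection $X^{p,q}$ satisfying Conditions 1--6 genuinely sits inside the $\mathrm{Frac}(R)$-subspaces of $W^{p,q}$ spanned by contractions of the generators, which requires checking that this candidate collection is itself closed under all six conditions and hence contains (equals, after intersecting) $X^{p,q}$ by minimality. The second, subtler point is the reduction of $K_0$ to $X^{0,0}$: one must confirm that the kernel-completion construction and the direct limit do not silently add scalars to $End(\one)$. For this I would use that $\ti F_{n+1}$ is faithful and identifies $End_{\C_{n+1}}(\one)$ with a subgroup of $End_{Vec_K}(K) = K$ built, via Definition \ref{def:homsets}, from $End_{\C_n}(\one)$ by composition (multiplication in $K$) and inversion — operations that do not leave the subfield already generated. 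Once these two reductions are in place, the transcendence-degree bound is immediate, and countability of $K_0$ follows since any finitely generated field extension of $\Q$ is countable.
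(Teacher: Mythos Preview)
Your argument has a real gap in the reduction step, and in any case the paper's route is much shorter.

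\textbf{The gap.} You claim that an endomorphism of $\one$ in $Ker(\C_n)_{F_n}$ is ``obtained from endomorphisms of $\one$ already in $\C_n$ by the field operations,'' and later that $End_{\C_{n+1}}(\one)$ is ``built, via Definition~\ref{def:homsets}, from $End_{\C_n}(\one)$ by composition and inversion.'' This is not what Condition~1 of Definition~\ref{def:homsets} says. An element of $X^{0_{\one},0_{\one}}$ coming from Condition~1 arises from a good sequence $X\stackrel{f_1}{\ra}Y\stackrel{f_0}{\ra}\one\ra 0$ in $\C_n$ together with a morphism $h:Y\ra\one$ satisfying $hf_1=0$; the resulting scalar is the $\lambda\in K$ with $F(h)=\lambda F(f_0)$. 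Here $Y$ is an arbitrary object of $\C_n$, not $\one$, and there is no a priori reason this $\lambda$ lies in $End_{\C_n}(\one)$. So the identity $K_0=X^{0,0}$ is not established by what you wrote. (It is in fact true, but it requires a genuine argument: e.g.\ one shows that $F(h)$ and $F(f_0)$ being $K$-proportional forces $h,f_0\in Hom_{\C_n}(Y,\one)$ to be $X^{0,0}$-proportional, which is essentially the content of Lemma~\ref{foridentities} one level down.)

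\textbf{How to fix it, and how the paper avoids all of this.} Your second half --- bounding everything by the fraction field $\mathrm{Frac}(R)$ of the $\Q$-algebra generated by the structure constants in a fixed basis --- is correct and does not actually need the reduction to $X^{0,0}$. One can run that same bookkeeping through every $\C_n$: kernels of matrices with entries in $\mathrm{Frac}(R)$ admit bases over $\mathrm{Frac}(R)$, and Conditions~1--3 preserve having matrix entries in $\mathrm{Frac}(R)$; hence $K_0=End_{\C_W}(\one)\subseteq\mathrm{Frac}(R)$ directly. But notice what this argument is really saying: the $\mathrm{Frac}(R)$-span of your chosen basis is a form of $W$ over $\mathrm{Frac}(R)$. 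That is exactly the paper's proof. The paper simply invokes Corollary~\ref{descent} (already proved): $K_0$ is contained in every subfield of $K$ over which $W$ has a form. Choosing any basis of $W$ and letting $K_1$ be the field generated by the finitely many coefficients of the $x_i$ in that basis, $W$ tautologically has a form over $K_1$, so $K_0\subseteq K_1$ and $K_1$ has finite transcendence degree over $\Q$. Three lines, no bookkeeping, no need to open up the kernel-completion construction at all.
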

\begin{proof}
 We have seen that $K_0$ is contained in every field over which $W$ has a form.
Choose an arbitrary basis $\{w_j\}$ for $W$ over $K$. Denote the dual basis by $\{w^j\}$.
Each tensor $x_i\in W^{p_i,q_i}$ can be written as a linear combination of tensor products of $w_j$'s and $w^j$'s.
Let us denote by $K_1$ the subfield of $K$ which is generated by all the coefficients appearing in all these linear combinations.
Then $W$ has a form over $K_1$, and thus $K_0\subseteq K_1$.
Since the number of tensors is finite, $K_1$ has a finite transcendental degree over $\Q$.
The transcendental degree of $K_0$ over $\Q$ is thus finite as well.
\end{proof}

The next lemma will be used in Section \ref{identities} to prove that all polynomial identities of $W$ are already defined over $K_0$ 
(in case $W$ is an algebra or a comodule algebra).
\begin{lemma}\label{foridentities}
Let $X$, $Y$ be two objects of $\C_W$. Let $f_1,f_2,\ldots, f_n\in Hom_{\C_W}(X,Y)$ be non-zero morphisms. Assume that $F(f_1),\ldots, F(f_n)$ are linearly dependent over $K$.
Then $f_1,\ldots, f_n$ are linearly dependent over $K_0$.
\end{lemma}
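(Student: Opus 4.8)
The plan is to pass to the "universal" coefficient situation by constructing a suitable object of $\C_W$ whose image under $F$ detects all linear relations over $K$ among the $F(f_i)$, and then to observe that the field of definition of this object's relevant morphism space is already $K_0$. Concretely, consider the $K$-linear map $\mu: K^n \to Hom_K(F(X),F(Y))$ sending $e_i \mapsto F(f_i)$; by hypothesis its kernel $R \subseteq K^n$ is nonzero, and I want to show $R$ is "defined over $K_0$", i.e. $R = (R \cap K_0^n)\ot_{K_0} K$, since then any nonzero vector in $R \cap K_0^n$ gives the desired $K_0$-linear dependence among the $f_i$. The essential point is that the $f_i$ themselves live in $Hom_{\C_W}(X,Y)$, which is a $K_0$-vector space mapping injectively (by faithfulness of $F$) into $Hom_K(F(X),F(Y)) \cong Hom_{\C_W}(X,Y)\ot_{K_0}K$ — this last isomorphism is exactly the content of the fact, to be used repeatedly, that $\C_W$ is a $K_0$-form of a $K$-linear category (Lemma \ref{extension} and the surrounding discussion identify $\C_W \ot_{K_0} K$ with the fundamental category over $K$, on which $F$ becomes the fiber functor).

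The key steps, in order. First, set $H := Hom_{\C_W}(X,Y)$, a $K_0$-vector space, and let $H_K := H \ot_{K_0} K$; the map $F$ identifies $H_K$ with a $K$-subspace of $Hom_K(F(X),F(Y))$, and the elements $f_1,\dots,f_n$ lie in $H \subseteq H_K$. Second, reduce the problem to pure linear algebra: given vectors $f_1,\dots,f_n$ in a $K_0$-vector space $H$, if their images in $H\ot_{K_0}K$ are $K$-linearly dependent, then they are $K_0$-linearly dependent. This is the standard fact that linear (in)dependence is insensitive to field extension — it follows because a maximal $K_0$-linearly independent subset of $\{f_i\}$ remains $K$-linearly independent in $H\ot_{K_0}K$ (extend it to a $K_0$-basis of its span; that basis stays a $K$-basis after $\ot_{K_0}K$), so the $K$-rank of $\{f_i\}$ equals the $K_0$-rank. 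Third, conclude: the $K_0$-rank of $\{f_1,\dots,f_n\}$ is therefore $< n$, which is precisely the assertion that $f_1,\dots,f_n$ are $K_0$-linearly dependent.

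The only genuine content is the first step: one must be sure that $F$ restricted to $Hom_{\C_W}(X,Y)$ really does exhibit $Hom_K(F(X),F(Y))$-membership as base change from $K_0$, i.e. that $Hom_{\C_W}(X,Y) \ot_{K_0} K \hookrightarrow Hom_K(F(X),F(Y))$ via $F$, with $F$ injective on $Hom_{\C_W}(X,Y)$. Injectivity of $F$ on hom-sets is faithfulness, already established for the limit functor $F:\C_W \to Vec_K$. The base-change statement is where I would invoke that $\C_W$ is a $K_0$-form of $Rep_K\!-\!G$ and $F\ot_{K_0}K$ is (isomorphic to) the forgetful fiber functor: then $Hom_{\C_W}(X,Y)\ot_{K_0}K \cong Hom_{\C_W\ot_{K_0}K}(X\ot_{K_0}K, Y\ot_{K_0}K)$, and the latter sits inside $Hom_K(F(X),F(Y))$ compatibly with the inclusion of $Hom_{\C_W}(X,Y)$. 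I expect this identification — i.e. making precise that "scalars of $\C_W$ are $K_0$ and $F$ becomes the full fiber functor after $\ot_{K_0}K$" — to be the main obstacle, though everything needed for it is in Section \ref{properties} and the results cited there; once it is in hand, the remaining argument is the elementary field-extension invariance of linear rank and requires no computation.
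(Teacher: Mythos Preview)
Your approach is correct and arguably more transparent than the paper's. The paper argues via the exterior-power map
\[
\textstyle\bigwedge^n F:\ \bigwedge^n_{K_0} Hom_{\C_W}(X,Y)\ \longrightarrow\ \bigwedge^n_{K} Hom_K(F(X),F(Y)),
\]
asserting it is injective because $F$ is ``faithful and symmetric''; you instead reduce directly to the invariance of linear independence under field extension, which needs precisely the injectivity of $Hom_{\C_W}(X,Y)\otimes_{K_0}K \to Hom_K(F(X),F(Y))$. These two formulations are equivalent, and you have correctly isolated this as the only nontrivial step.

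Two remarks. First, you cite Lemma~\ref{extension} for this injectivity, but that lemma concerns extending $K$ upward to a larger field $T$, not the passage from $K_0$ to $K$; the statement you actually want is in Section~\ref{cisform}, where $\C_W\otimes_{K_0}K$ is identified with $Rep_K-G$ (Theorem~\ref{main1}). Since the present lemma is only applied in Section~\ref{identities}, this forward reference creates no circularity.

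Second, there is a self-contained argument that avoids Section~\ref{cisform} entirely and may be what the paper's terse justification intends. Bundle the $f_i$ into a single morphism $\phi:\one^n\to Y\otimes X^*$ in $\C_W$. If the $F(f_i)$ are $K$-linearly dependent then $F(\phi)$ has nonzero kernel, hence so does $\phi$ (as $F$ is exact and faithful). Now $\one$ is simple in $\C_W$ (since $F(\one)=K$ and $F$ is exact and faithful), and a short induction on $n$ shows every nonzero subobject of $\one^n$ receives a nonzero map from $\one$; composing with the inclusion into $\one^n$ produces a nonzero tuple $(c_1,\dots,c_n)\in Hom_{\C_W}(\one,\one^n)=K_0^n$ with $\sum c_i f_i=0$. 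This uses only that $End_{\C_W}(\one)=K_0$ and that $F$ is faithful and exact, so it fits within Section~\ref{properties}.
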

\begin{proof}
Assume that $f_1,\ldots, f_n$ are morphisms in $Hom_{\C_W}(X,Y)$.
The functor $F$ induces a natural map $$\bigwedge^n F:\bigwedge^n Hom_{\C_W}(X,Y)\ra \bigwedge^n Hom_K(F(X),F(Y))$$
given by $f_1\wedge\ldots\wedge f_n\mapsto  F(f_1)\wedge\ldots \wedge F(f_n)$.
Since $F$ is faithful and symmetric, this map is well defined and injective. 
Therefore, if $f_1\ldots f_n$ are linearly independent over $K_0$ then $f_1\wedge\ldots\wedge f_n\neq 0$
and by the injectivity of $\bigwedge^n F$ we have that $F(f_1)\wedge\ldots \wedge F(f_n)\neq 0$ so that 
$F(f_1)\ldots F(f_n)$ are linearly independent over $K$.
\end{proof}
\end{section}

\begin{section}{The category $\C_W$ as a form of a representation category, and a proof of Theorem \ref{main1}}\label{cisform}
\label{categoryform}
From this section onwards we will assume that the field $K$ is algebraically closed. 
Since we can always extend scalars to the algebraic closure without altering the category $\C_W$, 
this will not be restrictive. On the other hand, it will be very useful when we will apply Deligne's Theory.

Let $G$ be the automorphism group of $(W,\{x_i\})$. That is $$G=\{g\in GL(W)|\forall i \, g(x_i)=x_i \}$$
(we have used here the fact that we have an induced action of $GL(W)$ on $W^{p,q}$).
Notice that $G$ is a closed subgroup of $GL(W)$ with respect to the Zariski Topology (so it is an affine algebraic group).
Our goal is to prove that $\C_W$ is a form of $Rep_K-G$. We will begin by recalling the following result of Deligne (see Theorem 1.12 and Theorem 7.1 in \cite{Deligne-tannaka}, 
and also Theorem 0.6 in \cite{Deligne-tensor} for a more general statement):
\begin{theorem}\label{Deligne}
Let $\D$ be a symmetric rigid $K$-linear monoidal category, tensor-generated by finitely many objects, where $K$ is an algebraically closed field of characteristic zero.
Assume that for every object $X\in \D$ we have $\bigwedge^n X=0$ for some $n>0$.
Then there exists a unique (up to isomorphism) fiber functor $F:\D\rightarrow Vec_K$.
Moreover, the group $G:=Aut_{\ot}(F)$ is an affine algebraic group over $K$, and we have a natural equivalence of $K$-linear symmetric monoidal categories 
$\D\rightarrow Rep_K-G$:
If $X$ is an object of $\D$ then the action of $G=Aut_{\ot}(F)$ on $F$ makes $F(X)$ a $G$-representation in a natural way.
\end{theorem}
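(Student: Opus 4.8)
The plan is to deduce this from Deligne's work together with classical Tannakian reconstruction, by assembling four ingredients. Recall that, in the terminology fixed above, $\D$ is an abelian $K$-linear rigid symmetric monoidal category with $\mathrm{End}_{\D}(\one)=K$ and $\mathrm{char}\,K=0$. First I would invoke Deligne's intrinsic characterization of Tannakian categories: the hypothesis guarantees that every object $X$ is annihilated by a Schur functor (namely by some $\bigwedge^n$), so Theorem $0.6$ of \cite{Deligne-tensor} provides a tensor equivalence $\D\simeq Rep_K\text{-}(\mathcal{G},\epsilon)$ for an affine super-group scheme $\mathcal{G}$ over $K$ with a parity element $\epsilon$ of order dividing $2$. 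This is the one genuinely deep input, and I would simply cite it; its proof constructs a fiber functor on an fppf-locally ringed topos and uses the characteristic-zero representation theory of the symmetric groups.

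Next I would use the two special features of the present situation to simplify $(\mathcal{G},\epsilon)$. If $\epsilon$ were nontrivial, then $\D$ would contain a nonzero purely odd object $Y$; but on such an object the exterior powers computed with the symmetry of $\D$ coincide with the ordinary symmetric powers of the underlying object, which never vanish --- contradicting $\bigwedge^n Y=0$. Hence $\epsilon=1$ and $\D\simeq Rep_K\text{-}\mathcal{G}$ for an ordinary affine group scheme $\mathcal{G}$. Then, fixing finitely many tensor generators $X_1,\dots,X_r$ with images $V_i$ under the underlying functor, I would identify $\mathcal{G}$ with the closed subgroup scheme of $\prod_i GL(V_i)$ consisting of the elements stabilizing every morphism of $\D$; this subgroup is of finite type, and in characteristic zero automatically reduced, so $G:=\mathcal{G}$ is an affine algebraic group.

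For the existence of the fiber functor I would observe that the forgetful functor $Rep_K\text{-}G\to Vec_K$ is one and transport it along the equivalence just obtained. For uniqueness, given two fiber functors $F_1,F_2\colon\D\to Vec_K$, the sheaf $\underline{\mathrm{Isom}}^{\ot}(F_1,F_2)$ is a torsor under the smooth affine algebraic group $\underline{\mathrm{Aut}}^{\ot}(F_1)$; since $K$ is algebraically closed this torsor has a $K$-point, whence $F_1\cong F_2$. Finally, Tannakian reconstruction (see \cite{Deligne-Milne} and Theorems $1.12$ and $7.1$ of \cite{Deligne-tannaka}) tells us that $\underline{\mathrm{Aut}}^{\ot}(F)$ is represented by an affine algebraic group $G$ and that $X\mapsto(F(X),\ \text{tautological }G\text{-action})$ is an equivalence $\D\xrightarrow{\ \sim\ }Rep_K\text{-}G$ of $K$-linear symmetric monoidal categories; matching this $G$ with the group found above completes the argument.

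The step I expect to be the genuine obstacle is the first one --- Deligne's construction of a fiber functor out of the Schur-functor vanishing hypothesis. Everything else is the comparatively formal Tannakian-duality package, together with the two simplifications made possible by working over an algebraically closed field of characteristic zero: ruling out the super case via the $\bigwedge^n$ hypothesis, and trivializing torsors for the uniqueness of $F$.
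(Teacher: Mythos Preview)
The paper does not prove this theorem at all: it is stated as a recalled result of Deligne, with citations to Theorem~1.12 and Theorem~7.1 of \cite{Deligne-tannaka} and Theorem~0.6 of \cite{Deligne-tensor}, and is then used as a black box. So there is no ``paper's own proof'' to compare against; your proposal is an expansion of why those citations suffice, and as such it is correct.

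One remark on the route you chose. You go through Theorem~0.6 of \cite{Deligne-tensor} (the super-Tannakian criterion via Schur functors) and then argue separately that the parity element $\epsilon$ must be trivial. This works, but it is a slight detour: the hypothesis here is not merely that each object is killed by \emph{some} Schur functor, but specifically by an exterior power, and this stronger hypothesis is exactly what Theorem~7.1 of \cite{Deligne-tannaka} uses directly (it is equivalent to the categorical dimension of every object being a nonnegative integer). Invoking that theorem gives an ordinary fiber functor immediately, with no super-group in sight. The paper's listing of both references suggests either path is acceptable; yours is correct but takes the scenic route through \cite{Deligne-tensor}. The remaining steps --- finite type from finite tensor-generation, reducedness from Cartier's theorem in characteristic zero, uniqueness of $F$ via triviality of torsors over an algebraically closed field, and the reconstruction equivalence --- are exactly the standard Tannakian package and are handled correctly.
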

\begin{remark} The fact that a fiber functor induces an equivalence between $\D$ and the representation category is known as Tannaka reconstruction, or Tannaka-Krein Duality.
\end{remark}
\begin{definition} 
We call $G$ the fundamental group of $\D$ (see also Section 8 of \cite{Deligne-tannaka}).
\end{definition}

In order to apply Deligne's Theorem, we need to extend scalars from out category $\C_W$ which is linear over $K_0$ to $K$. 
To do so, we begin with defining a new structure.
\begin{definition} 
Let $K_0\subseteq K'\subseteq K$ be an intermediate field.
For every $a\in K'$ we consider the tensor $y_a=a\in W^{0,0}=K$. 
The structure $W_{K'}$ is the structure given by the union of the original tensors in $W$ together with the tensors $y_a$. 
In other words, it is the structure $(W,\{x_i\}\cup \{y_a\})$.
\end{definition}
As for any other structure, we can construct the fundamental category $\C_{W_{K'}}$.
We write $F:\C_W\ra Vec_K$ and $F':\C_{W_{K'}}\ra Vec_K$ for the two fiber functors of the two fundamental categories. 
The fundamental category of $W'$ satisfies the following universal property:
\begin{lemma}\label{universalproperty2}
Let $\D$ be a $K'$-linear rigid symmetric monoidal category, and let $G:\D\ra Vec_K$ be a fiber functor.
Then we have an equivalence between the categories $\X:=Fun_{c,K_0}(\C_W,\D)$ and $\X':=Fun_{c,K'}(\C_{W_{K'}},\D)$
where $\X$ is the category of all $K_0$-linear symmetric monoidal exact faithful functors $H:\C_W\ra\D$ such that $GH\cong F$
and $\X'$ is the category of all $K'$-linear symmetric monoidal exact faithful functors $H':\C_{W_{K'}}\ra\D$ such that $GH'\cong F'$.
This equivalence is natural with respect to $K'$-libear symmetric monoidal exact faithful functors $J:\D\ra \D'$ of categories over $Vec_K$. 
\end{lemma}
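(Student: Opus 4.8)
The plan is to construct an equivalence $\X\simeq\X'$ by a pair of quasi-inverse functors $\Phi\colon\X'\ra\X$ and $\Psi\colon\X\ra\X'$, both obtained from the universal property of Theorem \ref{universalproperty} applied to $\C_W$ and to $\C_{W_{K'}}$. Throughout one must keep track of the scalar field $K'$, which sits compatibly inside $End_{\C_{W_{K'}}}(\one)$ (in fact this ring of endomorphisms is exactly $K'$, since the scalar tensors $\wb{y_a}$ are added precisely for $a\in K'\supseteq K_0$), inside $End_\D(\one)$ (since $\D$ is $K'$-linear), and inside $End_{Vec_K}(\one)=K$; we use that $G$, being a fibre functor on a $K'$-linear category, is $K'$-linear, so that the composite $K'\hookrightarrow End_\D(\one)\ra K$ (second arrow induced by $G$) is the inclusion $K'\subseteq K$.

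First I would produce a canonical comparison functor $\iota\colon\C_W\ra\C_{W_{K'}}$. Forgetting the scalar tensors $\wb{y_a}$ from the structure $\wb{W_{K'}}\in\C_{W_{K'}}$ leaves a structure of the same type as $W$, on which $F'$ takes the value $W=W\ot_K K$. Hence Theorem \ref{universalproperty}, with $T=K$ and $\A=\C_{W_{K'}}$, yields a unique exact faithful symmetric monoidal $\iota$ with $\iota(\wb W)=\wb{W_{K'}}$ as $W$-structures (in particular $\iota(\wb{x_i})=\wb{x_i}$ in $\C_{W_{K'}}$) and $F'\iota\cong F$. Define $\Phi=-\circ\iota$. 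Then $G(H'\iota)\cong(GH')\iota\cong F'\iota\cong F$, and $H'\iota$ is $K_0$-linear since any symmetric monoidal functor out of $\C_W$ is (its restriction to $K_0=End_{\C_W}(\one)$ being forced by faithfulness of $G$), so $\Phi$ lands in $\X$; naturality of $\Phi$ in $J\colon\D\ra\D'$ is immediate as $\Phi$ is precomposition. For $\Psi$, given $H\in\X$, I would build the structure $Z'$ of type $W_{K'}$ in $\D$ with underlying object $H(\wb W)$, $x_i$-tensors $H(\wb{x_i})$, and scalar tensor $y_a$ equal to $a\in K'\subseteq End_\D(\one)$; $K'$-linearity of $G$ gives $G(Z')\cong W_{K'}$ as structures (it reads $GH(\wb{x_i})\cong F(\wb{x_i})=x_i$ on the $x_i$, and is the identity on the scalars). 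Theorem \ref{universalproperty} then furnishes a unique exact faithful symmetric monoidal $H'=\Psi(H)\colon\C_{W_{K'}}\ra\D$ with $H'(\wb{W_{K'}})=Z'$ and $GH'\cong F'$; it is automatically $K'$-linear because $End_{\C_{W_{K'}}}(\one)=K'$ and the value of $H'$ there is pinned down by faithfulness and $K'$-linearity of $G$. On morphisms $\Psi$ is defined by the functoriality built into Theorem \ref{universalproperty}.

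It remains to verify $\Phi\Psi\cong\mathrm{Id}_\X$ and $\Psi\Phi\cong\mathrm{Id}_{\X'}$, together with naturality. For the first: $\Psi(H)\iota$ is exact, faithful and symmetric monoidal, sends $\wb W$ to the structure $(H(\wb W),\{H(\wb{x_i})\})$ (using $\iota(\wb{x_i})=\wb{x_i}$ and that $\Psi(H)(\wb{x_i})$ is the $x_i$-tensor of $Z'$), and satisfies $G(\Psi(H)\iota)\cong F$; the uniqueness clause of Theorem \ref{universalproperty} then identifies it canonically with $H$. For the second: given $H'\in\X'$, the structure attached to $\Phi(H')=H'\iota$ has object $H'(\wb{W_{K'}})$, $x_i$-tensors $H'(\wb{x_i})$ and scalar tensors $a\in K'\subseteq End_\D(\one)$, which are precisely $H'$ applied to the tensors and scalars of $\wb{W_{K'}}$ (here $K'$-linearity of $H'$ is used); hence $H'(\wb{W_{K'}})$ equals this structure, and uniqueness in the universal property of $\C_{W_{K'}}$ gives $\Psi(\Phi(H'))\cong H'$ canonically. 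Naturality of both natural isomorphisms in $J$ follows from the same uniqueness clauses.

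The substantive difficulty here is not conceptual but lies entirely in this bookkeeping: making sure that at every invocation of Theorem \ref{universalproperty} the relevant structures agree on the nose (not merely up to isomorphism), and that the three avatars of $K'$ are consistently identified. Once that is set up, the categorical content — existence, uniqueness, and naturality — is delivered by the universal property, and there are no further obstacles.
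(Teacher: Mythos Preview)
Your proof is correct and follows essentially the same approach as the paper: both directions of the equivalence are obtained by applying the universal property of Theorem~\ref{universalproperty}, once to $\C_W$ and once to $\C_{W_{K'}}$, using that a structure of type $W_{K'}$ in a $K'$-linear category $\D$ is the same datum as a structure of type $W$ (the scalar tensors $y_a$ being forced by $K'$-linearity). Your construction of a fixed comparison functor $\iota\colon\C_W\ra\C_{W_{K'}}$ and definition of $\Phi$ as precomposition with $\iota$ is a mild but harmless reformulation of the paper's direct appeal to the universal property of $\C_W$; it makes the functoriality of that direction a little more transparent, but the content is identical.
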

\begin{proof}
A functor $H:\C_W\ra \D$ in the first category will give rise to a structure $\ti{W}:=H(\wb{W})$ of $\D$ which is of the same type as $W$,
and which lies above $W$ in $Vec_K$.
On the other hand, since $\D$ is a $K'$-linear category, the structure $\ti{W}$ also has the structure of $W_{K'}$.
By the universal property of Theorem \ref{universalproperty} for the category $\C_{W_{K'}}$, we get a functor $H':\C_{W_{K'}}\ra \D$ in the category $\X'$.
On the other hand, if $H':\C_{W_{K'}}\ra \D$ is a functor in $\X'$, then the structure $H(\wb{W_{K'}})$ is a structure in $\D$ of the same type as $W_{K'}$.
In particular, it has all the tensors $x_i$, so it is also a structure of the same type as $W$.
By using the universal property of Theorem \ref{universalproperty} again, we get a functor $H:\C_W\ra\D$ in $\X$.
Another application of the universal property will show us that these two maps are inverse to each other,
and a final application of the universal property will show us that this correspondence has the desired natural property.
\end{proof}
\begin{definition}
We call the category $\C_{W_{K'}}$ the extension of scalars of $\C_W$ to $K'$ and write 
$\C_W\ot_{K_0}K' := \C_{W_{K'}}$. 
\end{definition}
\begin{remark}
Deligne and Milne (see \cite{Deligne-Milne}) defined the extension of scalars in case $K'$ is a finite extension of $K_0$ in a different way: 
they considered $K'$ as an algebra in $\C_W$
(which is possible because $K'$ is an algebra in $Vec_{K_0}$ and $Vec_{K_0}$ is embedded in $\C_W$ by using the unit object), and they then considered
the category $K'-mod$ of $K'$ modules inside $\C_W$. This is a $K'$-linear rigid symmetric monoidal category. 
It is possible to show that the two definitions will give equivalent categories.
\end{remark}
Notice that by the universal property in Lemma \ref{universalproperty2}, 
if $K_0\subseteq K'\subseteq K''\subseteq K$, then there is a one-to-one correspondence between fiber functors $\C_W\rightarrow Vec_{K''}$ and fiber functors
$\C_W\ot_{K_0} K'\rightarrow Vec_{K''}$.
The following theorem is the first half of Theorem \ref{main1}
\begin{theorem}
The category $\C_W$ is a $K_0$-form of $Rep_K-G$. In other words, we have an equivalence of symmetric monoidal $K$-linear categories 
$\C_W\otimes_{K_0} K\rightarrow Rep_K-G$
\end{theorem}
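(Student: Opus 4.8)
The plan is to apply Deligne's Theorem \ref{Deligne} to the category $\C_W\ot_{K_0}K$ and identify its fundamental group with $G$. First I would check that $\D:=\C_W\ot_{K_0}K=\C_{W_K}$ satisfies the hypotheses of Theorem \ref{Deligne}: it is a $K$-linear symmetric rigid monoidal category with $\mathrm{End}(\one)=K$ (the latter because adjoining all the scalars $y_a$ for $a\in K$ forces the endomorphism field of the unit to become all of $K$), it is tensor-generated by the single object $\wb{W_K}$ together with its dual, and it satisfies the condition $\bigwedge^n X=0$ for some $n$. The last point is the one place where the concrete nature of the construction enters: since $F':\D\ra Vec_K$ is faithful and monoidal, it induces an injection $\bigwedge^n\mathrm{End}_{\D}(X)\hookrightarrow \bigwedge^n\mathrm{End}_K(F'(X))$ (compare Lemma \ref{foridentities}), and more to the point every object of $\D$ lies, after applying $F'$, in a finite-dimensional vector space, so $\bigwedge^{d+1}X$ maps to $0$ under the faithful functor $\bigwedge^{d+1}F'$ where $d=\dim_K F'(X)$; faithfulness of $F'$ then gives $\bigwedge^{d+1}X=0$. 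Hence Deligne's Theorem applies and yields a unique fiber functor on $\D$, which must be $F'$, and an equivalence $\D\xrightarrow{\sim}Rep_K-\widetilde{G}$ where $\widetilde{G}=\mathrm{Aut}_{\ot}(F')$.

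Next I would identify $\widetilde{G}$ with $G=\mathrm{Aut}(W,\{x_i\})$. An element of $\mathrm{Aut}_{\ot}(F')$ is a monoidal natural automorphism of $F'$; evaluating at the generating object $\wb{W_K}$ gives an element $g\in GL(F'(\wb{W_K}))=GL(W)$, and naturality with respect to the morphisms $\wb{x_i}:\one\ra \wb{W_K}^{p_i,q_i}$ (which exist in $\C_W$ and hence in $\D$, and satisfy $F'(\wb{x_i})=x_i$) forces $g(x_i)=x_i$ for all $i$, i.e. $g\in G$. Conversely any $g\in G$ acts on every $W^{p,q}$ fixing all the $x_i$, hence acts on the image of every hom-set $\mathrm{Hom}_{\D}(A,B)\ra \mathrm{Hom}_K(F'(A),F'(B))$ compatibly — because, by the very construction of $\C_0$ and its kernel completions, every morphism of $\D$ is built from the $\wb{x_i}$, the evaluation/coevaluation maps and the symmetry, all of which are $GL(W)$-equivariant in the appropriate sense — and therefore defines a monoidal natural automorphism of $F'$. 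One then checks these two assignments are mutually inverse and algebraic, giving an isomorphism of affine algebraic groups $\widetilde{G}\cong G$. Composing with Deligne's equivalence gives $\C_W\ot_{K_0}K\xrightarrow{\sim}Rep_K-G$.

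The main obstacle I anticipate is the surjectivity direction of $G\hookrightarrow \mathrm{Aut}_{\ot}(F')$: one must argue that an automorphism of $W$ fixing the $x_i$ really does extend to a well-defined monoidal natural transformation on the whole of $\C_W$, which in turn requires knowing that \emph{all} morphisms in $\C_W$ — including the less explicit ones introduced in the iterated kernel completions $\C_n$ — are obtained from the structure tensors by the monoidal-category operations, and hence are automatically $G$-equivariant after applying $F$. This should follow by induction on $n$ from Definition \ref{def:homsets} (the morphisms in $X^{f,g}$ are induced from morphisms of $\C$ together with the universal property of kernels/cokernels, all of which are preserved by the $G$-action since $G$ acts by natural automorphisms at each finite stage) together with Lemma \ref{functorsdiagram}, but it deserves to be spelled out carefully; everything else — verifying Deligne's hypotheses and matching the tensors — is routine.
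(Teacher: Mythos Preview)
Your proposal is correct and follows essentially the same route as the paper: apply Tannaka reconstruction to the extended fiber functor $F_K:\C_W\ot_{K_0}K\to Vec_K$ to get an equivalence with $Rep_K\text{-}\widetilde{G}$ for $\widetilde{G}=\mathrm{Aut}_\otimes(F_K)$, then identify $\widetilde{G}$ with $G$ by showing the evaluation map $\widetilde{G}\to GL(W)$ is injective (since $\wb{W}$ tensor-generates) and has image exactly the stabilizer of the $x_i$ (the converse inclusion proved by induction on the $\C_n$). The paper is terser precisely at the point you flag as the main obstacle --- it just says $g$ ``induces an action on $F_1$, and then, by induction, also on $F$'' --- so your plan to spell out the inductive step via Definition \ref{def:homsets} is a genuine improvement in detail but not a different argument.
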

\begin{proof}
The functor $F:\C_W\rightarrow Vec_K$ extends naturally to $F_K:\C_W\otimes_{K_0} K\rightarrow Vec_K$.
Let us write $\ti{G}=Aut(F_K)$. By Tannaka reconstruction we know that we have an equivalence between $\C_W\otimes_{K_0} K$ and $Rep_K-\ti{G}$. The equivalence is given
in the following way:
if $U$ is an object of $\C_W\otimes_{K_0} K$, then $F_K(U)\in Vec_K$ is a vector space, and if $g\in \ti{G}$, then we have $g_U:F_K(U)\rightarrow F_K(U)$.
This furnishes a structure of a $\ti{G}$-representation on $F_K(U)$.
We would like now to determine $\ti{G}$.
First of all, notice that the map $\ti{G}\rightarrow GL(F_K(\wb{W}))$ is one-to-one.
This is due to the following reason:
if $g\in\ti{G}$ acts trivially on $F_K(\wb{W})$ 
then by the fact that $F_K$ is an additive monoidal functor, $g$ acts trivially on all of the category $\C_1$ (see Section \ref{construction}).
It then acts trivially on all of $\C_W$, because all the other objects of $\C_W$ are derived from $\C_1$ as iterated kernels.
It thus follows that $g=1$. We can thus consider $\ti{G}$ as a subgroup of $GL(F_K(\wb{W})=GL(W)$.
But if $g\in \ti{G}$ then it follows that $g$ fixes all the tensors $x_i$ (because the way we have constructed the groups $X^{p,q}$).
Conversely, if $g$ fixes all the $x_i$'s, then it follows easily that all the vectors in $X^{p,q}$ for any $p$ and $q$ are $g$-invariant, and therefore $g$ induces
an action on the functor $F_1:\C_1\ra Vec_K$, and then, by induction, also on the functor $F:\C_W\ra Vec_K$.
So $\ti{G}=G$, and we have the desired result.
\end{proof}
\begin{remark}
 Due to the construction of $\C_W$ the field $K_0$ must be contained in every field over which $W$ has a form.
 In case $W$ has a form over $K_0$ itself, $K_0$ is usually referred to as a field of definition for $W$.
 However, there are cases in which $W$ will not have a form over $K_0$ (see Subsection \ref{nok0}).
 We do see that even though $W$ might not have a form over $K_0$, the category of representations of the automorphism group of $W$ will always have one.
\end{remark}

The above theorem shows us how we can reconstruct $W$ out of $\C_W$ and some additional data.
Indeed, we can think of $W$ as the object $\wb{W}$ of $\C_W$, and the tensors $x_i$ can be considered as morphisms $x_i\in Hom_{\C_W}(\one,\wb{W}^{\ot p}\ot (\wb{W}^*)^{\ot q})$.
The equivalence $\C_W\ot_{K_0}K\cong Rep_K-G$ gives us a fiber functor $F:\C_W\ot_{K_0}K\rightarrow Vec_K$, and this gives us the structure $(F(\wb{W}),\{F(x_i)\})$, 
which is isomorphic with $(W,\{x_i\})$.
We have used here the fact that $K$ is algebraically closed, and therefore there exists only one fiber functor on $\C_W\ot_{K_0}K$ (up to equivalence).
We record our result in the following lemma:
\begin{lemma}\label{reconstruction}
The structure $(W,\{x_i\})$ can be reconstructed from the following data: the category $\C_W$, the object $\wb{W}$ and the morphisms 
$x_i\in Hom_{\C_W}(\one,\wb{W}^{\ot p}\ot (\wb{W}^*)^{\ot q})$.
\end{lemma}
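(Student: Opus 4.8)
The plan is to read the lemma off from the equivalence $\C_W\otimes_{K_0}K\cong Rep_K-G$ proved just above, together with the uniqueness clause in Deligne's Theorem~\ref{Deligne}. What actually has to be checked is that the structure produced from the abstract categorical data --- the category $\C_W$, the object $\wb{W}$, and the morphisms $\wb{x_i}\in Hom_{\C_W}(\one,\wb{W}^{\ot p_i}\ot(\wb{W}^*)^{\ot q_i})$ with $F(\wb{x_i})=x_i$ --- does not depend on any choices, and recovers $(W,\{x_i\})$ up to isomorphism of structures. The field $K$ (an algebraically closed extension of $K_0$) is taken as part of the ambient data; only $K_0=End_{\C_W}(\one)$ is intrinsic to $\C_W$.

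First I would base change to $\C_W\otimes_{K_0}K=\C_{W_K}$ and verify that it satisfies the hypotheses of Theorem~\ref{Deligne}: it is a $K$-linear rigid symmetric monoidal category with $End(\one)=K$; it is tensor-generated by the single object $\wb{W}$, since by the construction $\C_W=\varinjlim_n\C_n$ every object is a subquotient of a finite direct sum of objects $\wb{W}^{\ot p}\ot(\wb{W}^*)^{\ot q}$; and for every object $X$ one has $\bigwedge^n X=0$ for $n$ large, because $F$ is faithful and exact and $\bigwedge^n F(X)=0$ once $n>\dim_K F(X)$. Theorem~\ref{Deligne} then yields a fiber functor $F_K:\C_W\otimes_{K_0}K\ra Vec_K$, unique up to monoidal isomorphism, together with the equivalence $\C_W\otimes_{K_0}K\cong Rep_K-Aut_{\ot}(F_K)$. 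The reconstruction is then to apply $F_K$ to $\wb{W}$ and to the morphisms $\wb{x_i}$, producing a structure $(F_K(\wb{W}),\{F_K(\wb{x_i})\})$ over $K$ of the same type as $W$.

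It remains to identify this structure with $(W,\{x_i\})$. As observed in the proof of the preceding theorem, the canonical fiber functor $F:\C_W\ra Vec_K$ of Section~\ref{construction} extends to a fiber functor $\C_W\otimes_{K_0}K\ra Vec_K$, and by construction this extension sends $\wb{W}$ to $W$ and $\wb{x_i}$ to $x_i$ under the identification $Hom_K(K,W^{p_i,q_i})\cong W^{p_i,q_i}$. By the uniqueness clause of Theorem~\ref{Deligne}, this extension is monoidally isomorphic to $F_K$, and a monoidal natural isomorphism between symmetric monoidal functors carries the structure built from $\wb{W}$ and the $\wb{x_i}$ on one side to the corresponding one on the other, since a tensor of a fixed type $(p,q)$ is built from $ev$, $coev$, the symmetry and composition, all of which such an isomorphism respects. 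Hence $(F_K(\wb{W}),\{F_K(\wb{x_i})\})\cong(W,\{x_i\})$ as structures.

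Since the statement is essentially a repackaging of the $K_0$-form theorem, the only point needing care --- and the only genuine obstacle --- is the appeal to uniqueness of the fiber functor. Over a non-algebraically closed base, $\C_W\otimes_{K_0}K$ may admit inequivalent fiber functors into $Vec_K$ (these are precisely the possibly non-isomorphic $K$-forms of $W$, cf.\ Theorem~\ref{main1}), and then the categorical data would not pin down the structure. Because $K$ is assumed algebraically closed from Section~\ref{categoryform} onward, Theorem~\ref{Deligne} applies and the reconstruction is unambiguous; everything else is a matter of unwinding the definitions.
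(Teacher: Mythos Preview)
Your proposal is correct and follows essentially the same argument as the paper: the lemma is simply a restatement of the paragraph preceding it, which reconstructs $(W,\{x_i\})$ by applying the (unique, since $K$ is algebraically closed) fiber functor on $\C_W\otimes_{K_0}K$ to $\wb{W}$ and the $\wb{x_i}$. Your additional verification that the hypotheses of Theorem~\ref{Deligne} hold, and your explicit remark that the algebraic closedness of $K$ is what makes the reconstruction unambiguous, are useful elaborations but do not change the route.
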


The next result gives us the connection between forms of $W$ and fiber functors. It finishes the proof of Theorem \ref{main1}.
\begin{theorem}
Let $K_0\subseteq K'\subseteq K$ be an intermediate field. There is a one-to-one correspondence between forms of $W$ over $K'$ and fiber functors 
$\C_W\rightarrow Vec_{K'}$.
\end{theorem}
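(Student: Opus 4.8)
The plan is to set up a bijection in both directions and check they are mutually inverse, using the universal property of $\C_W$ (Theorem \ref{universalproperty}) together with Lemma \ref{reconstruction}. First I would describe the map from fiber functors to forms. Given a fiber functor $F':\C_W\ra Vec_{K'}$, apply it to the canonical data of Lemma \ref{reconstruction}: set $W':=F'(\wb{W})$ and $x_i':=F'(\wb{x_i})$, where $\wb{x_i}\in Hom_{\C_W}(\one,\wb{W}^{\otimes p_i}\otimes(\wb{W}^*)^{\otimes q_i})$ are the morphisms lifting the structure tensors. Since $F'$ is symmetric monoidal and $End_{\C_W}(\one)=K_0\subseteq K'$, the pair $(W',\{x_i'\})$ is a structure of the same type as $W$, defined over $K'$, of dimension $\dim_K W$. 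To see it is a \emph{form} of $W$, extend scalars: $F'$ induces a fiber functor $\C_W\otimes_{K_0}K'\ra Vec_{K'}$, hence (after further extension to $K$, using the remark that fiber functors $\C_W\ra Vec_K$ correspond to fiber functors $\C_W\otimes_{K_0}K'\ra Vec_K$, and uniqueness of the fiber functor on $\C_W\otimes_{K_0}K$ over the algebraically closed field $K$) an equivalence $\C_W\otimes_{K_0}K\cong Rep_K\text{-}G$ under which $F'\otimes_{K'}K$ becomes the forgetful functor; this identifies $W'\otimes_{K'}K$ with $W$, so $W'$ is a form of $W$ over $K'$. Isomorphic fiber functors clearly give isomorphic structures, so the assignment is well defined on isomorphism classes.

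Next I would describe the inverse map, from forms to fiber functors. Let $W'$ be a form of $W$ over $K'$, so $W'\otimes_{K'}K\cong W\otimes_K K = W$ as structures (using that $K$ is algebraically closed, $W$ is already over $K$). Apply Theorem \ref{universalproperty} with $T=K$, $\A=Vec_{K'}$ viewed inside $Vec_K$ — more precisely, take $\A$ to be $Vec_{K'}$ and $G$ the extension-of-scalars functor $i_{K',K}:Vec_{K'}\ra Vec_K$, which is exact, additive, faithful and symmetric monoidal, and let $Z=W'$, which satisfies $G(Z)=W'\otimes_{K'}K\cong W\otimes_K K$. The theorem then produces a unique (up to isomorphism) exact faithful symmetric monoidal functor $F_{W'}:\C_W\ra Vec_{K'}$ with $F_{W'}(\wb{W})=W'$ and $i_{K',K}F_{W'}\cong i_{K,K}F = F$; in particular $F_{W'}$ is a fiber functor landing in $Vec_{K'}$. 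Again isomorphic forms yield isomorphic functors, by the uniqueness clause.

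Then I would verify the two constructions are mutually inverse. Starting from a form $W'$, the functor $F_{W'}$ sends $\wb{W}$ to $W'$ and $\wb{x_i}$ to the structure tensors of $W'$ (since $i_{K',K}F_{W'}\cong F$ forces $F_{W'}(\wb{x_i})$ to lift $x_i$, and by the construction these are exactly the tensors exhibiting $W'$ as a form); so applying Lemma \ref{reconstruction}'s recipe to $F_{W'}$ returns $W'$ up to isomorphism of structures. Conversely, starting from a fiber functor $F'$ and forming $(W',\{x_i'\})=(F'(\wb{W}),\{F'(\wb{x_i})\})$, both $F'$ and $F_{W'}$ are exact faithful symmetric monoidal functors $\C_W\ra Vec_{K'}$ sending $\wb{W}\mapsto W'$ with the correct tensors, and both become $F$ after extending scalars to $K$; the uniqueness clause of Theorem \ref{universalproperty} (applied over $Vec_{K'}$, which one may reach by factoring through $\C_W\otimes_{K_0}K'$ via Lemma \ref{universalproperty2}) then gives $F'\cong F_{W'}$. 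This closes the loop.

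The main obstacle I expect is the bookkeeping around base fields: Theorem \ref{universalproperty} as stated is phrased for an \emph{extension field} $T\supseteq K$ and a functor into $Vec_T$, whereas here I want a functor into $Vec_{K'}$ with $K_0\subseteq K'\subseteq K$, i.e. a \emph{smaller} field. The honest fix is to work with $\C_W\otimes_{K_0}K'$ and invoke Lemma \ref{universalproperty2} (with $\D$ a $K'$-linear category such as $Vec_{K'}$ and the fiber functor $G=i_{K',K}$), which converts $K_0$-linear functors out of $\C_W$ into $K'$-linear functors out of $\C_W\otimes_{K_0}K'$, together with the already-noted correspondence between fiber functors $\C_W\ra Vec_{K'}$ and fiber functors $\C_W\otimes_{K_0}K'\ra Vec_{K'}$; one must also check that $i_{K',K}$ is genuinely exact and faithful (it is, being extension of scalars along a field extension) so that the uniqueness part of the relevant universal property applies. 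Everything else — that the specialization of structure tensors is again of the right type, that $K_0\subseteq K'$ so the $K_0$-linear structure survives — is routine once the field-of-definition juggling is organized.
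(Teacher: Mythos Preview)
Your proposal is correct and follows essentially the same strategy as the paper. The forward direction (fiber functor $\mapsto$ form) is identical: apply $F'$ to $(\wb{W},\{\wb{x_i}\})$, extend both $F$ and $F'$ to fiber functors on $\C_W\otimes_{K_0}K$, and invoke uniqueness of the fiber functor over the algebraically closed field $K$ (Theorem~\ref{Deligne}) to identify $W'\otimes_{K'}K\cong W$.

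For the backward direction the paper takes a slightly different but equivalent route: rather than invoking Theorem~\ref{universalproperty} directly with $T=K$, $\A=Vec_{K'}$, $G=i_{K',K}$, it builds the fundamental category $\C_{W'}$ of the form $W'$, uses Lemma~\ref{extension} and Corollary~\ref{descent} to obtain an equivalence $\C_{W'}\cong\C_W$, and then transports the canonical fiber functor $\C_{W'}\ra Vec_{K'}$ across. Since that equivalence is itself proved via the universal property, the two arguments unwind to the same thing. Your ``obstacle'' is not a real one: taking $T=K$ is a legitimate instance of Theorem~\ref{universalproperty}, and $i_{K',K}:Vec_{K'}\ra Vec_K$ is exact and faithful, so no detour through $\C_W\otimes_{K_0}K'$ is needed for the existence step. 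The paper, incidentally, does not spell out the mutual-inverse verification; your extra checking there (via the uniqueness clause of Theorem~\ref{universalproperty}) is a genuine addition.
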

\begin{proof}
Assume that $F':\C_W\rightarrow Vec_{K'}$ is a fiber functor.
Let $W':=F'(\wb{W})$. Then $W'$ is a vector space over $K'$, and if $x_i$ is a tensor of type $(p_i,q_i)$, then we have the tensors $y_i:=F'(x_i)\in W'^{p_i,q_i}$
(as mentioned earlier, we can think of the tensors $x_i$ as morphisms inside $\C_W$). We need to prove that $(W',\{y_i\})$ is indeed a form of $(W,x_i)$.
We thus need to prove that $(W'\ot_{K'} K,\{y_i\})\cong (W,\{x_i\})$.
The functors $F$ and $F'$ induce two fiber functors $\C_W\ot_{K_0} K\rightarrow Vec_K$.
Since $K$ is algebraically closed, we know from Theorem \ref{Deligne} that they are isomorphic.
But an isomorphism between them will induce an isomorphism $(W'\ot_{K'} K,\{y_i\})\cong (W,\{x_i\})$ as required.
In the other direction, assume that $(W',\{y_i\})$ is a form of $(W,\{x_i\})$.
Then we can construct the fundamental category $\D$ of $(W',\{y_i\})$.
But we have seen in Lemma \ref{extension} that this category depends only on the extension of scalars of $W'$ to $K$.
Therefore, we have an equivalence of $K_0$-linear rigid symmetric monoidal categories $\D\cong \C_W$.
Since $W'$ induces a fiber functor from $\D$ to $Vec_{K'}$, we get a functor from $\C_W$ to $Vec_{K'}$ as required.
\end{proof}
\end{section}

\begin{section}{Construction of the generic form, and a proof of Theorem \ref{main2}}\label{generic}
By the work of Deligne we know that if $K_0$ is an algebraically closed field,
then $\C_W$ is necessarily the representation category of some algebraic group.
However, $K_0$ is usually not algebraically closed. In this section we will use Deligne's theory, in order to deduce Theorem \ref{main2} and construct the generic form $\ti{W}$.

In order to prove the theorem, we will follow the original proof of Deligne. 
We will study algebras and modules inside the category $Ind(\C_W)$, and we will explain how we can use them in order to construct fiber functors.
Let then $A$ be a commutative algebra inside $Ind(\C_W)$.
As was mentioned in Section \ref{prel}, we can talk about $A$-modules inside the category $\C_W$.
We denote the category of all such modules (with $A$-module homomorphisms) by $A-mod$. This is again an abelian category.

We have a natural exact monoidal functor $F_A:\C_W\rightarrow A-mod$ given by $F_A(X)=A\ot X$, where the action of $A$ is on the first tensor factor.
We have $$Hom_{A-mod}(F_A(X),M)\cong Hom_{\C_W}(X,M).$$
Let $B:=Hom_{A-mod}(A,A)\cong Hom_{\C_W}(\one,A)$.
This is a $K_0$-algebra.
However, we can also view it as a subalgebra of $A$. Indeed, the algebra $B\ot\one$ is an algebra inside $\C_W$, and can be considered as a subobject 
(and in fact a subalgebra) of $A$.
Notice that if $F:\C_W\rightarrow Vec_{K'}$ is any fiber functor, then $F(B)$ will just be the extension of scalars $B\ot_{K_0}K'$.
Let us denote by $B-mod$ the category of $B$-modules (when $B$ is considered as an algebra in $Vec_{K_0}$, not in $\C_W$, and we also consider only modules in $Vec_{K_0}$).
We have the following lemma:
\begin{lemma}\label{trivialization}
Assume that $W\ot A$ is isomorphic in $\C_W$ with $A^n$ for some $n$.
Then the functor $F_B:\C_W\rightarrow B-mod$ given by $F_B(X) = Hom_{\C_W}(\one,F_A(X))$ is a monoidal functor.
\end{lemma}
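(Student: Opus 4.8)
The plan is to produce the lax symmetric monoidal structure that $F_B$ carries for an arbitrary commutative algebra $A$, and then to use the hypothesis $\wb{W}\ot A\cong A^n$ to upgrade it to a genuine (strong) monoidal structure. First I would recall the formal set-up. The category $A-mod$ is a symmetric monoidal abelian category with tensor product $\ot_A$ and unit object $A$, and $F_A:\C_W\ra A-mod$, $F_A(X)=A\ot X$, is a strong symmetric monoidal functor (it is the exact monoidal functor mentioned above, with $F_A(X)\ot_A F_A(Y)\cong F_A(X\ot Y)$). Next, the ``global sections'' functor $\Gamma:=Hom_{A-mod}(A,-):A-mod\ra B-mod$ is lax symmetric monoidal: composing sections gives, for $A$-modules $M,N$, a natural morphism $\lambda_{M,N}:\Gamma(M)\ot_B\Gamma(N)\ra\Gamma(M\ot_A N)$ compatible with the associativity and symmetry constraints, and $\Gamma(A)=B$ is the unit. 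Since $F_B\cong\Gamma\circ F_A$ (this is precisely the identification $Hom_{\C_W}(\one,F_A(X))\cong Hom_{A-mod}(A,F_A(X))$ used to introduce $B$), the functor $F_B$ inherits a lax symmetric monoidal structure, with structure morphisms $\mu_{X,Y}:F_B(X)\ot_B F_B(Y)\ra F_B(X\ot Y)$ obtained by composing $\lambda_{F_A(X),F_A(Y)}$ with $\Gamma$ of the canonical isomorphism $F_A(X)\ot_A F_A(Y)\cong F_A(X\ot Y)$. So the whole content of the lemma is that each $\mu_{X,Y}$ is an isomorphism.

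The reduction I would then make is: $\lambda_{M,N}$ is an isomorphism whenever $M$ is a retract of a finite free module $A^k$. Indeed $\lambda_{A,N}$ is visibly an isomorphism, $\lambda$ is additive and natural in its first variable, so $\lambda_{A^k,N}$ is an isomorphism; and $\lambda_{M,N}$, being a retract of $\lambda_{A^k,N}$, is then an isomorphism as well (in an abelian category a retract of an isomorphism is an isomorphism). Hence it suffices to prove that $F_A(X)=A\ot X$ is a retract of a finite free $A$-module for every object $X$ of $\C_W$, and it is here that the hypothesis enters. Because $X$ is dualizable in $\C_W$ and $F_A$ is strong monoidal, $F_A(X)$ is dualizable in $A-mod$; dualizability forces $F_A(X)$ to be flat over $A$ (tensoring with a dualizable object has a two-sided adjoint, hence is exact). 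On the other hand the hypothesis $\wb{W}\ot A\cong A^n$, together with the fact that $F_A$ preserves duals and tensor products, gives $A\ot\wb{W}^{p,q}\cong A^{n^{p+q}}$; and since every object of $\C_W$ is a subquotient of a finite direct sum of objects $\wb{W}^{p,q}$ — by the construction of $\C_W$ as an iterated kernel and cokernel completion of $\C_1$, or, after the identification $\C_W\ot_{K_0}K\cong Rep_K-G$, by the classical fact that every finite dimensional $G$-representation is a subquotient of a finite sum of tensor powers of $W$ and $W^*$ — applying the exact functor $A\ot-$ exhibits $F_A(X)$ as finitely presented over $A$. A finitely presented flat module is finitely generated projective, i.e. a retract of some $A^k$, which is what the reduction needs.

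The main obstacle is exactly this last point: passing from the tensor powers $\wb{W}^{p,q}$, where $A\ot-$ is visibly finite free, to an \emph{arbitrary} object $X$, since a subquotient of a finite free module need not itself be free or projective; the argument genuinely has to combine the dualizability (hence flatness) of $F_A(X)$ with the finite presentation coming from the hypothesis. Making the implication ``dualizable, equivalently finitely presented and flat, $\Rightarrow$ finitely generated projective'' precise \emph{inside the abstract category} $A-mod$, rather than for modules over an honest commutative ring, is the step requiring the most care; I would reduce it to the classical commutative-algebra statement by applying a fiber functor $G:\C_W\ra Vec_{K'}$, under which $A$ becomes a commutative $K'$-algebra and $F_A(X)$ a $G(A)$-module, and by noting that being a retract of $A^k$ can be tested after the faithful exact functor $G$. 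Once $F_A(X)$ is known to be finitely generated projective over $A$ for every $X$, the reduction of the second paragraph shows that all $\mu_{X,Y}$ are isomorphisms, so $F_B$ is monoidal; the compatibility of $\mu$ with the symmetry constraints, and hence the fact that $F_B$ is a \emph{symmetric} monoidal functor, follows from the corresponding compatibilities for $\Gamma$ and for $F_A$.
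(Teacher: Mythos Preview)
Your setup --- the lax symmetric monoidal structure on $F_B=\Gamma\circ F_A$ and the reduction to showing that $F_A(X)$ is a retract of some $A^k$ --- is correct. The gap lies in the argument that $F_A(X)$ actually is such a retract. Two of the steps do not go through. First, being a subquotient of a finite free $A$-module does not make $F_A(X)$ finitely presented: subobjects of $A^k$ need not even be finitely generated when $A$ is not Noetherian, and nothing in the hypothesis forces $A$ to be Noetherian. Second, and decisively, a faithful exact symmetric monoidal functor does \emph{not} reflect the property ``is a retract of $A^k$'', because it is not full. Under a fiber functor $G$ the module $G(F_A(X))=G(A)\ot_{K'}G(X)$ is visibly free of finite rank over $G(A)$, but the splitting maps exhibiting it as a summand of $G(A)^k$ have no reason to lie in the image of $G$ on $Hom_{A-mod}$. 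As a model of this failure, take $A=\one$ in $\C_W\ot_{K_0}K\cong Rep_K-G$ with $G$ non-reductive: a non-split extension of representations becomes split after the forgetful functor to $Vec_K$, yet the splitting is not equivariant. So the appeal to classical commutative algebra does not close the argument; whether $F_A(X)$ is a summand of a free module inside $A-mod$ in $\C_W$ is a genuinely categorical question that the fiber functor cannot decide for you.

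The paper avoids this issue entirely. Rather than establishing anything about $F_A(X)$ as an $A$-module, it shows directly that the comparison map $\beta_{X,Y}:F_B(X)\ot_B F_B(Y)\ra F_B(X\ot Y)$ is an isomorphism by induction on the filtration $\C_1\subset\C_2\subset\cdots$ from which $\C_W$ is built. The base case holds because $A\ot Y$ is finite free for every $Y\in\C_1$, by the hypothesis $\wb{W}\ot A\cong A^n$. For the inductive step one embeds $X\in\C_{i+1}$ in an exact sequence $0\ra X\ra Q\ra V$ with $Q,V\in\C_i$, tensors with $Y$, applies the left-exact functor $Hom_{\C_W}(\one,A\ot-)$ to both $0\ra X\ra Q\ra V$ and $0\ra X\ot Y\ra Q\ot Y\ra V\ot Y$, and compares the two resulting left-exact rows via the maps $\beta$; since $\beta_{Q,Y}$ and $\beta_{V,Y}$ are isomorphisms by induction, a diagram chase forces $\beta_{X,Y}$ to be one as well. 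No projectivity of $F_A(X)$ is ever needed.
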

\begin{proof}
We have a natural morphism $F_B(X)\ot F_B(Y)=Hom_{\C_W}(\one,X\ot A)\ot Hom_{\C_W}(\one,Y\ot A)\rightarrow Hom_{\C_W}(\one,X\ot Y\ot A)=F_B(X\ot Y)$. 
This morphism factors through $F_B(X)\ot_B F_B(Y)$. 
We will write the resulting morphism as $\beta_{X,Y}:F_B(X)\ot_B F_B(Y)\rightarrow F_B(X\ot Y)$.
Our goal is to prove that $\beta_{X,Y}$ is an isomorphism for every $X$ and $Y$.
We will prove it by induction on the subcategories $\C_i$.
Since $W\ot A\cong A^n$ we also have that $W^*\ot A\cong A^n$, and $W^{p,q}\ot A$ will also be a free $A$-module of finite rank for every $p$ and $q$.
Since the functor $$A-mod\rightarrow B-mod$$ $$X\mapsto Hom_{\C_W}(\one,X)$$ is a monoidal functor when restricted to the subcategory of $A$-modules of the form $\oplus A$, 
we have that $\beta_{X,Y}$ is an isomorphism for $X,Y\in \C_1$ (since the objects of $\C_1$ are direct sums of the objects of $\C_0$).

We now continue by induction.
Assume that $\beta_{X,Y}$ is an isomorphism for every $X,Y\in ob\C_i$. 
All the objects in $\C_{i+1}$ are formed as kernels of morphisms in $\C_i$.
We have already seen that every cokernel is a kernel (and vice versa).
So if $X\in Ob\C_{i+1}$ then there exists an exact sequence of the form $$0\rightarrow X\rightarrow Q\rightarrow W$$ where $Q,W\in Ob\C_i$. 
Assume that $Y$ is another object of $\C_i$. 
Then we also have the exact sequence $$0\rightarrow X\ot Y\rightarrow Q\ot Y\rightarrow W\ot Y.$$
The functor $-\ot A$ is exact, and the functor $Hom_{\C_W}(\one,-)$ is left exact.
By applying $F_B$, we thus get the following diagram, in which the rows are exact:\\
$\xymatrix{
0\ar[r] & F_B(X)\ot F_B(Y)\ar[d]^{\beta_{X,Y}}\ar[r] & F_B(Q)\ot F_B(Y)\ar[r]\ar[d]^{\beta_{Q,Y}} & F_B(W)\ot F_B(Y)\ar[d]^{\beta_{W,Y}} \\
0\ar[r] & F_B(X\ot Y)\ar[r] & F_B(Q\ot Y)\ar[r] & F_B(W\ot Y)} $\\
By the induction hypothesis we know that $\beta_{Q,Y}$ and $\beta_{W,Y}$ are isomorphisms. An easy diagram chase shows that $\beta_{X,Y}$ is also an isomorphism. 
In a similar way, we can now prove that $\beta_{X,Y}$ is an isomorphism for $X,Y\in ob\C_{i+1}$ and we are done.
\end{proof}
Notice, however, that the resulting functor $F:\C_W\rightarrow B-mod$ might fail to be exact.
We have the following lemma:
\begin{lemma}\label{split}
Assume that every short exact sequence $0\rightarrow X\rightarrow Y\rightarrow Z\rightarrow 0$ splits after tensoring with $A$.
Then the functor $F_B$ is exact.
\end{lemma}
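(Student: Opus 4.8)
The plan is to realise $F_B$ as a composite of two functors whose exactness behaviour is easy to control, and then to observe that the splitting hypothesis is exactly what supplies the property that is otherwise missing. Using the adjunction $Hom_{A-mod}(F_A(X),M)\cong Hom_{\C_W}(X,M)$ recalled above, together with $F_A(\one)\cong A$, we may identify $F_B(X)=Hom_{\C_W}(\one,A\ot X)$ with $Hom_{A-mod}(A,A\ot X)=Hom_{A-mod}(A,F_A(X))$. Thus $F_B$ is the composite of $F_A:\C_W\ra A-mod$ with $Hom_{A-mod}(A,-):A-mod\ra B-mod$. Both of these functors are additive, so $F_B$ is additive, and it therefore suffices to check that $F_B$ carries short exact sequences to short exact sequences.

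First I would recall that $F_A=A\ot-$ is an exact functor into $A-mod$: the functor $-\ot A$ was already used, and noted to be exact, in the proof of Lemma \ref{trivialization}, relying on the fact that tensoring with an object of a rigid abelian category is exact and that filtered colimits are exact in $Ind(\C_W)$. By contrast $Hom_{A-mod}(A,-)$ is in general only left exact, so at this stage we only know that $F_B$ is left exact; the content of the lemma is precisely to recover right exactness.

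Now let $0\ra X\ra Y\ra Z\ra 0$ be a short exact sequence in $\C_W$. Applying the exact functor $F_A$ gives a short exact sequence $0\ra A\ot X\ra A\ot Y\ra A\ot Z\ra 0$ of $A$-modules, which by hypothesis splits in $A-mod$; fix an $A$-module section $s:A\ot Z\ra A\ot Y$. Since $Hom_{A-mod}(A,-)$ is additive, it sends this split short exact sequence to a split short exact sequence of $B$-modules, the section being $Hom_{A-mod}(A,s)$. Hence $0\ra F_B(X)\ra F_B(Y)\ra F_B(Z)\ra 0$ is (split) exact in $B-mod$, and $F_B$ is exact.

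I do not anticipate a real obstacle here; the single point that deserves attention — and the place where the hypothesis is genuinely used — is that the section $s$ must be a morphism of $A$-modules, so that $Hom_{A-mod}(A,-)$ can be applied to it and produces a splitting of the sequence of $B$-modules. Accordingly the hypothesis must be read as splitting in $A-mod$ rather than merely in $Ind(\C_W)$. It is also worth spelling out once why $Hom_{\C_W}(\one,M)$ carries a natural $B$-module structure for an $A$-module $M$, namely via $\one\cong\one\ot\one\ra A\ot M\ra M$, so that the composite above really lands in $B-mod$ and sends $A$-linear maps to $B$-linear maps; this is routine.
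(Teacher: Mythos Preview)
Your argument is correct and is essentially the paper's proof spelled out in detail: the paper simply says that $F_A$ is exact and that any additive functor preserves split exact sequences, which is exactly your decomposition $F_B\cong Hom_{A-mod}(A,-)\circ F_A$ followed by the observation that the second factor, being additive, sends the split sequence $0\ra A\ot X\ra A\ot Y\ra A\ot Z\ra 0$ to a split exact sequence. Your caveat about the splitting being in $A\text{-mod}$ is reasonable but not strictly needed for the paper's formulation, since one may equally view the second factor as $Hom_{\C_W}(\one,-)$ on $Ind(\C_W)$, which is additive and hence preserves split exact sequences regardless of $A$-linearity of the section.
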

\begin{proof}
This follows from the fact that the functor $F_A$ is exact, and that any additive functor between abelian categories is exact when restricted to split exact sequences.
\end{proof}

In the original work of Deligne, he constructed an algebra $A_D$ which satisfies the requirements of the two lemmas above.
This algebra $A_D$ will be a tensor product of localizations of symmetric algebras. It might be an infinite tensor product (see section 2.10-2.11 of \cite{Deligne-tensor}).
The algebra $A_D$ gives rise to a fiber functor $F_B:\C_W\rightarrow B-mod$.
If $\phi:B\rightarrow K_1$ is a homomorphism from $B$ to a field $K_1$ of characteristic zero, then
we can compose $F_B$ with the resulting functor $\phi:B-mod\rightarrow Vec_{K_1}$ to get a fiber functor from $\C_W$ to $Vec_{K_1}$
(since all the short exact sequence in $\C_W$ split in $B-mod$, the resulting functor is still exact).
\begin{definition}\label{classify}
We say that a commutative algebra $A$ inside $\C_W$ is a \textit{classifying algebra} if: \\
1. It satisfies the conditions of Lemmas \ref{trivialization} and \ref{split} \\
2. Every fiber functor from $\C_W$ to some extension field $K_1$ of $K_0$ arises from some homomorphism from $B$ to $K_1$
\end{definition}
\begin{proposition}\label{condclass}
Assume that $A$ satisfies the conditions of Lemmas \ref{trivialization} and \ref{split}, 
and that $A$ is a tensor product of localizations of symmetric algebras. Then $A$ also satisfies condition 2 of the definition above,
and therefore $A$ is a classifying algebra.
\end{proposition}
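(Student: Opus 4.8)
\noindent\textit{Proof strategy.} The plan is to follow Deligne's original argument and verify condition 2 of Definition~\ref{classify} directly. Let $K_1$ be an extension field of $K_0$ and let $G:\C_W\ra Vec_{K_1}$ be a fiber functor; I will produce a homomorphism of $K_0$-algebras $\phi:B\ra K_1$ together with an isomorphism $G\cong\phi_*F_B$, where $\phi_*F_B$ denotes the composition of $F_B:\C_W\ra B-mod$ with the base change functor $-\ot_{B,\phi}K_1:B-mod\ra Vec_{K_1}$. The starting point is to extend $G$ to a functor on $Ind(\C_W)$ and examine the commutative $K_1$-algebra $G(A)$. Since $G$ is exact and symmetric monoidal it commutes with the formation of $Sym$, of localizations, and of (possibly infinite) tensor products of algebras; writing $A=\bigotimes_\lambda Sym(V_\lambda)_{f_\lambda}$ as in the hypothesis, this gives $G(A)=\bigotimes_\lambda Sym(G(V_\lambda))_{G(f_\lambda)}$, again a tensor product of localizations of polynomial rings in finitely many variables, now over $K_1$. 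It is nonzero: each $f_\lambda$ is a nonzero morphism of $Ind(\C_W)$ (as it must be for $A$, hence $B$, to be nonzero), so by faithfulness of $G$ its image $G(f_\lambda)$ is a nonzero element of the polynomial ring $Sym(G(V_\lambda))$, and a tensor product over the field $K_1$ of nonzero algebras is nonzero.

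Next I would construct $\phi$. Because $K_1$ has characteristic zero it is infinite, so a nonzero polynomial over $K_1$ in finitely many variables does not vanish identically; hence each factor $Sym(G(V_\lambda))_{G(f_\lambda)}$ has a $K_1$-point, and by the universal property of the tensor product (the coproduct of commutative $K_1$-algebras) these assemble to a $K_1$-algebra homomorphism $\epsilon:G(A)\ra K_1$. Composing the canonical $K_0$-algebra map $B=Hom_{\C_W}(\one,A)\ra Hom_{K_1}(G(\one),G(A))=G(A)$ with $\epsilon$ yields the desired homomorphism $\phi:B\ra K_1$.

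It then remains to exhibit the isomorphism $G\cong\phi_*F_B$, and for this I would build a comparison natural transformation out of $\epsilon$. For each object $X$ of $\C_W$, functoriality and monoidality of $G$ produce a map $F_B(X)=Hom_{\C_W}(\one,X\ot A)\ra Hom_{K_1}(K_1,G(X)\ot_{K_1}G(A))=G(X)\ot_{K_1}G(A)$, which is $B$-linear for the action of $B$ on the target through $B\ra G(A)$; composing with the map $G(X)\ot_{K_1}G(A)\ra G(X)$ induced by $\epsilon$ and passing to $-\ot_{B,\phi}K_1$ gives a $K_1$-linear natural transformation $\eta:\phi_*F_B\Rightarrow G$. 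Using Lemma~\ref{trivialization} (monoidality of $F_B$) together with the fact that $\epsilon$ is an algebra homomorphism, one checks that $\eta$ is a morphism of symmetric monoidal functors; moreover $\phi_*F_B$ is exact, since by Lemma~\ref{split} the functor $F_A$, hence $F_B$, sends short exact sequences of $\C_W$ to split ones, which survive $-\ot_{B,\phi}K_1$. Thus $\eta$ is a morphism between two fiber functors $\C_W\ra Vec_{K_1}$, and since $\C_W$ is rigid any such morphism is automatically an isomorphism: one sees this by feeding the evaluation and coevaluation morphisms of an arbitrary object of $\C_W$ into the naturality squares of $\eta$, exactly as in \cite{Deligne-Milne}. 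Hence $\eta$ is an isomorphism, $G$ arises from $\phi$, and — condition~1 of Definition~\ref{classify} being part of the hypothesis — $A$ is a classifying algebra.

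The step I expect to be the main obstacle is showing that $G(A)$ is nonzero and admits a point valued in $K_1$ itself (rather than in some larger field): this is exactly where the precise shape of $A$ as a tensor product of localizations of \emph{symmetric} algebras, and the characteristic-zero hypothesis, are really used. By contrast, checking that $\eta$ is monoidal is a routine, if slightly lengthy, diagram chase, and the invertibility of a monoidal transformation of fiber functors is a standard consequence of rigidity.
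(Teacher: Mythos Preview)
Your proposal is correct and follows essentially the same route as the paper's proof: apply the given fiber functor to $A$, use the explicit shape of $A$ and the infinitude of $K_1$ to find a $K_1$-point of $G(A)$, restrict to $B$ to obtain $\phi$, and then build the comparison transformation $\phi_*F_B\Rightarrow G$ exactly as you describe, concluding by the standard fact that a monoidal natural transformation between fiber functors on a rigid category is invertible. The paper cites \cite{Deligne-tannaka}, Section~2.7, for this last step where you invoke \cite{Deligne-Milne}, but the argument is the same.
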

\begin{proof}
Let $F':\C_W\rightarrow Vec_{K_1}$ be a fiber functor.
Write $A$ as $$A=\bigotimes_i Sym(M_i)_{f_i}$$
where $M_i$ are objects of $\C_W$ and $f_i$ are nonzero elements in $Sym(M_i)$
Then we have that $F'(Sym(M_i)_{f_i}) = Sym(F'(M_i))_{F'(f_i)}$. The second algebra is a localization of a symmetric algebra over $K_1$.
We thus have a $K_1$-homomorphism $Sym(F'(M_i))_{F'(f_i)}\rightarrow K_1$. This is because $K_1$ is infinite, and therefore almost all homomorphisms
$Sym(F'(M_i))\rightarrow K_1$ will extend to any finite localization. 
By taking the tensor product, we get a homomorphism $\phi:F'(A)\rightarrow K_1$. This homomorphism restricts to $F'(B)=B\otimes K_1$ and therefore to $B$. 
The resulting homomorphism (which we denote by the same letter) $\phi:B\rightarrow K_1$ gives rise to a fiber functor $F'':\C_W\rightarrow Vec_{K_1}$. 
Moreover, the two functors are isomorphic.
Indeed, the equivalence is defined in the following way:
We have $F''(X)=Hom_{\C_W}(\one,X\ot A)\ot_B K_1$. We have a natural map 
$$Hom_{\C_W}(\one,X\ot A)\rightarrow Hom_{K_1}(K_1,F'(X)\ot F'(A))\cong F'(X)\ot F'(A)\stackrel{Id\ot \phi}{\rightarrow} F'(X).$$
This map factors through $Hom_{\C_W}(\one,X\ot A)\ot_B K_1$, and we get a natural transformation $F''\rightarrow F'$. 
By Deligne (see Section 2.7 in \cite{Deligne-tannaka}) we know that any natural tensor transformation between two fiber functors is an isomorphism, so we are done.
\end{proof}
\begin{remark}
In most cases which will be of interest for us the group $G$ will be reductive, and the category $\C_W$ will be semisimple.
The condition of Lemma \ref{split} will then be satisfied automatically.
\end{remark}
We would like to construct a concrete example of a classifying algebra in case $G$ is reductive. 
Assume that the dimension of $W$ is $n$. 
We take $n$ copies of $W$, $W_1,W_2,\ldots, W_n$ and $n$ copies of $W^*$, $W^*_1,\ldots, W^*_n$.
We write $Id_{i,j}\in W_i\ot W^*_j$ for the canonical element representing the identity map.
We write $A=Sym(W_1\oplus W_2\oplus\cdots\oplus W_n\oplus W^*_1\oplus\cdots\oplus W^*_n)/(Id_{i,j}-\delta_{i,j})_{i,j}$
We claim the following proposition:
\begin{proposition} 
If $G$ is reductive then the algebra $A$ is a classifying algebra for $\C_W$.
\end{proposition}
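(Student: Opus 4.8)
The plan is to check, one at a time, the three defining conditions of a classifying algebra in Definition \ref{classify}, reducing each of them to a classical fact by means of the faithful exact symmetric monoidal functor $F:\C_W\ra Vec_K$ (extended to $Ind(\C_W)$) together with the equivalence $\C_W\ot_{K_0}K\cong Rep_K-G$ proved above. First I would describe $A$ concretely. Each $W_i$ is a copy of $\wb W$ and each $W_i^*$ a copy of $\wb W^*$, and their images in the degree-one part of $Sym$ assemble, inside $A-mod$ in $Ind(\C_W)$, into ``universal'' morphisms $\mathbf v:A^{\oplus n}\ra \wb W\ot A$ and $\boldsymbol\xi:A^{\oplus n}\ra \wb W^*\ot A$ (here I keep track of the duality isomorphisms $Hom_{\C_W}(\wb W,A)\cong Hom_{\C_W}(\one,\wb W^*\ot A)$, etc.). The relations $Id_{i,j}=\delta_{i,j}$ say precisely that the pairing matrix of $\boldsymbol\xi$ against $\mathbf v$ is the identity. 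Applying $F$ turns $A$ into the coordinate ring of the affine scheme of pairs consisting of $n$ vectors of $W$ and $n$ covectors with identity pairing matrix; since $\dim_K W=n$, the existence of such a dual system forces the $n$ vectors to be a basis and determines the covectors as the dual basis, so $F(A)\cong\mathcal{O}(GL_n)$, the coordinate ring of $GL(W)$, a domain of finite type and a localization $\mathcal{O}(M_n)_{\det}$ of a polynomial ring.

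Next I would verify the hypothesis of Lemma \ref{trivialization}, namely $\wb W\ot A\cong A^{\oplus n}$. Applying $F$ to $\mathbf v$ gives the map $\mathcal{O}(GL_n)^{\oplus n}\ra W\ot\mathcal{O}(GL_n)$ sending the standard basis to the universal vectors; over $\mathcal{O}(GL_n)$ these form a basis (Cramer's rule inverts the universal pairing matrix, so the matrix of the universal vectors is invertible), hence $F(\mathbf v)$ is an isomorphism, and since $F$ is faithful and exact it reflects isomorphisms, so $\mathbf v$ is an isomorphism in $A-mod$. (Equivalently one may carry out Cramer's rule and the identity ``$PQ=I\Rightarrow QP=I$'' for square matrices internally in the rigid $K_0$-linear category; this is the same argument.) The same reduction shows that the natural map $Sym(W_1\oplus\cdots\oplus W_n)_{\det}\ra A$ — where $\det$ denotes the morphism from the invertible object $\bigwedge^n\wb W$ given by $v_1\wedge\cdots\wedge v_n$, localized in the sense of Section \ref{prel} — is an isomorphism, since after applying $F$ it becomes the classical identification $\mathcal{O}(M_n)_{\det}=\mathcal{O}(GL_n)$. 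Thus $A$ is (a tensor product of one factor of) a localization of a symmetric algebra.

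For the hypothesis of Lemma \ref{split}: since $G$ is reductive and $\mathrm{char}\,K=0$, the category $Rep_K-G$ is semisimple; as $\C_W\ot_{K_0}K\cong Rep_K-G$, and lengths and $\mathrm{Ext}^1$-groups in $\C_W$ base-change faithfully along $\C_W\ra\C_W\ot_{K_0}K$, the category $\C_W$ is itself semisimple. Hence every short exact sequence in $\C_W$ splits already, a fortiori after applying the exact functor $-\ot A$, so Lemma \ref{split} applies. Therefore $A$ satisfies the hypotheses of Lemmas \ref{trivialization} and \ref{split} and is a localization of a symmetric algebra, so by Proposition \ref{condclass} it also satisfies condition $2$ of Definition \ref{classify}; that is, $A$ is a classifying algebra for $\C_W$.

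I expect the main obstacle to be the bookkeeping of the second paragraph: making precise the universal morphisms $\mathbf v,\boldsymbol\xi$ as honest maps inside $Ind(\C_W)$, and verifying that Cramer's rule carried out internally in $\C_W$ — or, equivalently, the reduction along the faithful exact $F$ — yields genuine isomorphisms of $A$-modules and of algebras rather than merely pointwise statements. The remaining ingredients (descent of semisimplicity, the appeal to Proposition \ref{condclass}) are routine.
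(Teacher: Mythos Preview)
Your proposal is correct and follows essentially the same route as the paper: construct the maps $A^{\oplus n}\leftrightarrow \wb W\ot A$ from the degree-one generators of $A$, verify by applying $F$ that they are mutually inverse (so Lemma \ref{trivialization} applies), invoke reductivity of $G$ for Lemma \ref{split}, identify $A$ as a localization of a symmetric algebra by the determinant, and conclude via Proposition \ref{condclass}. The only cosmetic difference is that the paper presents $A$ as the localization of $Sym\big((W_1)^*\oplus\cdots\oplus(W_n)^*\big)$ at the determinant subobject of $(W^*)^{\ot n}$, whereas you use $Sym(W_1\oplus\cdots\oplus W_n)$ and the subobject $\bigwedge^n\wb W$; both descriptions become $\mathcal{O}(M_n)_{\det}\cong\mathcal{O}(GL_n)$ after applying $F$, so either is valid.
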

\begin{proof}
For every $i=1,\ldots, n$ we have a map $W\rightarrow A$ given by the inclusion of $W$ as $W_i$ in $A$, 
and we have a map $\one\rightarrow W\ot A$ given by the coevaluation of $W$, $\one\rightarrow W\ot W^*_i$.
By extension of scalars we get maps  $\rho_i:W\ot A\rightarrow A$ and $\nu_i:A\rightarrow W\ot A$.
The direct sum of these maps will give us maps $\rho:W\ot A\rightarrow A^n$ and $\nu:A^n\rightarrow W\ot A$.
Applying the original fiber functor $F$ reveals the fact that these two maps are mutually inverse to each other and that the algebra $A$ is non-zero.
(we use here the fact that $F$ extends naturally to $Ind(\C_W)$, and that it is faithful).
We thus see that $A$ satisfies the condition of Lemma \ref{trivialization}.
Since $G$ is reductive, $A$ satisfies the condition of Lemma \ref{split} trivially.
We notice that $A$ can be seen as a localization of a symmetric algebra.
Indeed, $A$ is equal to the localization of the subalgebra $Sym[(W_1)^* \oplus (W_2)^* \oplus\cdots\oplus (W_n)^* ]$
by the determinant polynomial. More explicitly: we have an identification of $(W_1)^*\ot (W_2)^*\ot\cdots\ot (W_n)^*$ with
$(W^*)^{\ot n}$. The action of $S_n$ on the last vector space gives us a one dimensional sub-object inside $(W_1)^*\ot (W_2)^*\ot\cdots\ot (W_n)^*$. 
This sub-object will correspond to the determinant polynomial.
It follows from \ref{condclass} that $A$ is a classifying algebra.
It follows that $B\ot_{K_0} K$ is the subalgebra of $G$-invariants of $A\ot K$.
The algebra $B\ot_{K_0} K$ is a finitely generated algebra, because $G$ is reductive. 
It then follows that $B$ itself is finitely generated over $K_0$.
\end{proof}
\begin{proof}[Proof of Theorem \ref{main2}]
Let now $A$ be a classifying algebra for $\C_W$. 
We write $B_W=Hom_{\C_W}(\one,A)$. 
We then have a fiber functor $F_B:\C_W\rightarrow B_W-mod$.
Consider the $B$-module $\ti{W}:=F_B(\wb{W})$.
Since $W\ot A\cong A^n$, we have automatically that $\ti{W}\cong B_W^n$ as $B_W$-modules.
For every $i$, $x_i$ can be considered as a morphism in $Hom_{\C_W}((q_i,0),(p_i,0))$. 
We can therefore consider $F(x_i):\ti{W}^{\ot q_i}\rightarrow \ti{W}^{\ot q_i}$.
This will give us the structure on $\ti{W}$.
Now, if $\phi:B_W\rightarrow K_1$ is a homomorphism of rings,
then we can consider the composition $\phi F_B:\C_W\rightarrow Vec_{K_1}$ which is a fiber functor.
We then have that $\phi F_B(\wb{W})=\ti{W}\ot_{B_W} K_1$ is a form of $W$. 
In the other direction, every form $W'$ of $W$ over $K_1$ will induce $F':\C_W\rightarrow Vec_{K_1}$.
By Proposition \ref{condclass} this functor arises from a homomorphism $B_W\rightarrow K_1$. 
In order to prove that $B_W$ has no zero divisors, it is enough to prove that $B_W\ot_{K_0} K$ has none. 
But $B_W\ot_{K_0} K = F(B_W)$ is a subalgebra of $F(A)$. 
Since the algebra $F(A)$ is the localization of a symmetric algebra, it is integral, and the same is true for $B_W$ as desired.

Finally, we need to prove that if the group $G$ is reductive, then $B_W$ is finitely generated.
Since $B_W=Hom_{\C_W}(\one,A)$ we have $B_W\ot_{K_0} K=F(B_W) = F(A)^G$.
Since the group $G$ is reductive and we can chose $A$ in such a way that the algebra $F(A)$ is finitely generated 
(it will be the tensor product of finitely many finite localization of symmetric algebras), we get that $F(A)^G$ is finitely generated over $K$.
From this we can easily deduce that $B_W$ itself is finitely generated.
Therefore, if $m$ is a maximal ideal of $B_W$, then $B_W/m$ is a finite extension field of $K_0$ (this follows from Hilbert's Nullstellensatz),
and we will get a form over a finite extension of $K_0$.
\end{proof}
\end{section}

\begin{section}{The action of the Galois group, and a proof of Theorem \ref{main4}}\label{sec:galois}
In this section we study the case in which $K/K_0$ is a Galois extension.
More generally, assume that $L\subseteq K_0$ is a finite extension, and that $K/L$ is Galois.
We write $\Ga=Gal(K/L)$. Let $(W,\{x_i\})$ be some structure defined over $K$ (as mentioned before, we assume now that $K$ is algebraically closed).

For $\gamma\in \Ga$, we write $^{\gamma} W$ for the following $K$-vector space: as an abelian group $^{\gamma}W=W$ and the twisted action of $K$ is given by 
$$x\cdot w = \gamma^{-1}(x)w.$$ 
The tensors $\{x_i\}$ will give us tensors $\{^{\gamma}x_i\}$ on $^{\gamma} W$. 
If $\{e_j\}$ is a basis for $W$, then we can write every tensor as a $K$-linear combination of tensor products of elements from the basis with elements from the dual basis.
The vector space $^{\gamma}W$ will then have the same basis, 
and the tensors $\{ ^{\gamma} x_i\}$ will be the tensors given by the action of $\gamma$ on the coefficients of the original tensors.

It is possible that the two structures $(W,\{x_i\})$ and $(^{\gamma}W,\{^{\gamma}x_i\})$ will not be isomorphic.
For example, if $G$ is a finite group, and $W=K^{\alpha}G$ is a $G$-graded algebra, 
then $^{\gamma}W$ will be the twisted group algebra $K^{\gamma(\alpha)}G$. These two graded algebras need not be isomorphic.
However, if $(W,\{x_i\})\cong (W',\{y_i\})$ then $(^{\gamma}W, \{^{^\gamma}x_i\})\cong (^{\gamma}W',\{^{\gamma}y_i\})$.
We thus have an action of $\Ga$ on all isomorphism classes of structures $(W,\{x_i\})$ 
where $W$ is a vector space of dimension $n$ and $\{x_i\}$ is a family of tensors of types $(p_i,q_i)$.

This action of the Galois group generalizes to categories. Indeed, if $\C$ is a $K_1$-linear category (where $K_0\subseteq K_1\subseteq K$) and $\gamma\in \Ga$,
then we can define a new $\gamma(K_1)$-linear category $^{\gamma}\C$ in the following way:
$^{\gamma}\C$ is exactly the same category as $\C$, with the exception of the action of ${\gamma}(K_1)$ on the Hom-sets:
if $X,Y$ are objects of $\C$, then we define $$Hom_{^{\gamma}\C}(X,Y) =\, ^{\gamma}Hom_{\C}(X,Y).$$
Notice that we have a natural equivalence of $\gamma(K_1)$-linear categories $$S_{\gamma}:^{\gamma}Vec_{K_1}\cong Vec_{\gamma(K_1)}$$ given by mapping $W$ to $^{\gamma}W$.

Let now $\gamma\in \Ga$. We consider the structure $^{\gamma}W$, which is also defined over $K$. Since $\C_W$ is a $K_0$-linear category,
we can twist scalars, and get the category $^{\gamma}\C_W$ which is $\gamma(K_0)$-linear.
We claim the following lemma:
\begin{lemma}
We have an equivalence of categories $E_{\gamma}:\, ^{\gamma}\C_W\cong C_{^{\gamma}W}$ which takes $W$ to $^{\gamma}W$.
Moreover, the two functors $$S_{\gamma} (^{\gamma}F_W):\,^{\gamma}\C_W\rightarrow\, ^{\gamma}Vec_K\rightarrow Vec_{K}\textrm { and }$$ 
$$F_{^{\gamma}W}E_{\gamma}:\, ^{\gamma}\C_W\rightarrow C_{^{\gamma}W}\rightarrow Vec_{K}$$ are equivalent.
\end{lemma}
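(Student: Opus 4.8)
The plan is to obtain everything from the universal property of the fundamental category (Theorem \ref{universalproperty}), applied once in each direction, together with its uniqueness clause. First I would record that twisting the base field leaves the underlying category, the tensor bifunctor and all the structural isomorphisms (associativity, symmetry, evaluation, coevaluation) unchanged, so ${}^{\gamma}\C_W$ is again an abelian symmetric monoidal rigid category, and $S_{\gamma}({}^{\gamma}F_W):{}^{\gamma}\C_W\ra{}^{\gamma}Vec_K\ra Vec_K$ is an exact, additive, faithful, symmetric monoidal functor. Since ${}^{\gamma}F_W$ agrees with $F_W$ on objects and $S_{\gamma}$ twists afterwards, this functor sends the object $\wb{W}$ of ${}^{\gamma}\C_W$ to ${}^{\gamma}W$ and the morphisms $\wb{x_i}$ to the tensors ${}^{\gamma}x_i$; thus $\big({}^{\gamma}\C_W,\,S_{\gamma}({}^{\gamma}F_W),\,\wb{W}\big)$ carries a structure lying above ${}^{\gamma}W={}^{\gamma}W\ot_K K$.

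Next I would apply Theorem \ref{universalproperty} to the structure ${}^{\gamma}W$, with $\A={}^{\gamma}\C_W$, $T=K$ and $G=S_{\gamma}({}^{\gamma}F_W)$, obtaining a unique exact faithful symmetric monoidal functor $\ti{F}:\C_{{}^{\gamma}W}\ra{}^{\gamma}\C_W$ with $\ti{F}(\wb{{}^{\gamma}W})=\wb{W}$ as structures and $S_{\gamma}({}^{\gamma}F_W)\ti{F}\cong F_{{}^{\gamma}W}$. For the opposite direction I would apply the universal property of $\C_W$ to the category $\B:={}^{\gamma^{-1}}\C_{{}^{\gamma}W}$: it too is abelian symmetric monoidal rigid, it carries the fiber functor $S_{\gamma^{-1}}({}^{\gamma^{-1}}F_{{}^{\gamma}W}):\B\ra Vec_K$, and this functor sends the object $\wb{{}^{\gamma}W}$ together with its tensors to the structure ${}^{\gamma^{-1}}\big({}^{\gamma}W,\{{}^{\gamma}x_i\}\big)=(W,\{x_i\})$, because twisting by $\gamma$ and then by $\gamma^{-1}$ returns the original vector space and the original tensors. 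Theorem \ref{universalproperty} then yields a unique exact faithful symmetric monoidal functor $\hat{F}:\C_W\ra{}^{\gamma^{-1}}\C_{{}^{\gamma}W}$ with $\hat{F}(\wb{W})=\wb{{}^{\gamma}W}$ and $S_{\gamma^{-1}}({}^{\gamma^{-1}}F_{{}^{\gamma}W})\hat{F}\cong F_W$, and I set $E_{\gamma}:={}^{\gamma}\hat{F}$, a functor ${}^{\gamma}\C_W\ra{}^{\gamma}\big({}^{\gamma^{-1}}\C_{{}^{\gamma}W}\big)=\C_{{}^{\gamma}W}$ sending $\wb{W}$ to $\wb{{}^{\gamma}W}$.

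For the ``moreover'' clause I would twist the isomorphism $S_{\gamma^{-1}}({}^{\gamma^{-1}}F_{{}^{\gamma}W})\hat{F}\cong F_W$ by $\gamma$ and use that ${}^{\gamma}\big(S_{\gamma^{-1}}({}^{\gamma^{-1}}F_{{}^{\gamma}W})\big)={}^{\gamma}S_{\gamma^{-1}}\circ F_{{}^{\gamma}W}$ and $S_{\gamma}\circ{}^{\gamma}S_{\gamma^{-1}}\cong Id_{Vec_K}$; composing with $S_{\gamma}$ then gives $F_{{}^{\gamma}W}E_{\gamma}\cong S_{\gamma}({}^{\gamma}F_W)$, which is precisely the asserted equivalence of the two functors out of ${}^{\gamma}\C_W$. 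To see that $E_{\gamma}$ is an equivalence with quasi-inverse $\ti{F}$ I would consider $E_{\gamma}\ti{F}:\C_{{}^{\gamma}W}\ra\C_{{}^{\gamma}W}$: it is exact, faithful and symmetric monoidal, it fixes the structure $\wb{{}^{\gamma}W}$ (by the chosen normalisations $\ti{F}(\wb{{}^{\gamma}W})=\wb{W}$ and $\hat{F}(\wb{W})=\wb{{}^{\gamma}W}$ on objects and tensors), and $F_{{}^{\gamma}W}(E_{\gamma}\ti{F})=(F_{{}^{\gamma}W}E_{\gamma})\ti{F}\cong S_{\gamma}({}^{\gamma}F_W)\ti{F}\cong F_{{}^{\gamma}W}$; hence by the uniqueness clause of Theorem \ref{universalproperty}, applied to $\C_{{}^{\gamma}W}$ with $\A=\C_{{}^{\gamma}W}$, $G=F_{{}^{\gamma}W}$, $Z=\wb{{}^{\gamma}W}$ and comparing $E_{\gamma}\ti{F}$ with $Id$, we get $E_{\gamma}\ti{F}\cong Id$. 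Twisting the composite $\ti{F}E_{\gamma}:{}^{\gamma}\C_W\ra{}^{\gamma}\C_W$ back by $\gamma^{-1}$ turns it into ${}^{\gamma^{-1}}\ti{F}\circ\hat{F}:\C_W\ra\C_W$, and the same argument with the uniqueness clause for $\C_W$ gives ${}^{\gamma^{-1}}\ti{F}\circ\hat{F}\cong Id$, hence $\ti{F}E_{\gamma}\cong Id$.

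The step I expect to need the most care is not the category theory above but the bookkeeping around the scalar twist: checking that ${}^{\gamma}\big({}^{\gamma^{-1}}\C\big)=\C$ on the nose (or through a canonical identification), that $\C\mapsto{}^{\gamma}\C$ is compatible with composition of functors so that ``twisting a relation'' is legitimate, and that $S_{\gamma}$ together with its twists compose to the identity up to coherent natural isomorphism. None of this is deep, but it is easy to get the direction of a twist wrong.
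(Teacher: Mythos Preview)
Your proposal is correct and follows exactly the approach indicated by the paper: the paper's proof consists of a single sentence invoking the universal property of Theorem \ref{universalproperty} for both $\C_W$ and $\C_{{}^{\gamma}W}$, and you have carefully unpacked what that invocation entails, including the bookkeeping on twists and the use of the uniqueness clause to obtain the quasi-inverse relations. Your closing caveat about tracking the direction of the twists is well placed, but the argument as written handles it properly.
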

\begin{proof}
The proof follows directly from the universal property of the categories $\C_{^{\gamma}W}$ and $\C_W$ from Theorem \ref{universalproperty}	
\end{proof}
\begin{proof}[Proof of Theorem \ref{main4}]
We will prove that $\gamma\in\Gamma$ fixes $K_0$ pointwise if and only if it fixes the isomorphism type of $W$
(by using Galois Correspondence, this proves the theorem).
Assume first that $\gamma$ fixes $K_0$ pointwise. 
This implies that the identity functor $Id:\,^{\gamma}\C_W\cong \C_W$, is an equivalence of $K_0$-linear categories.
By the previous lemma we have an equivalence between $\C_W$ and $C_{^{\gamma}W}$ which sends $W$ to $^{\gamma}W$
and $x_i$ to $^{\gamma}x_i$. This implies that $W\cong\, ^{\gamma}W$, as desired (since $K$ is algebraically closed).

On the other hand, assume that $\psi:W\cong \,^{\gamma} W$ is an isomorphism of the two structures.
The idea is that any $x\in K_0$ is an invariant of the isomorphism type of $W$. 
The corresponding invariant for $^{\gamma}W$ will be $\gamma(x)$.
But if $W\cong \,^{\gamma}W$ then it must hold that $x=\gamma(x)$.
More precisely, 
the isomorphism $\psi$ induces an equivalence of fundamental categories $\Psi:\C_{^{\gamma}W}\cong \C_W$.
When this equivalence is composed with the equivalence $E_{\gamma}$ from the previous lemma, we get an equivalence $H_{\gamma}:\, ^{\gamma}\C_W\ra \C_W$. 
We have functors $^{\gamma}F_W$ and $F_W$ from $^{\gamma}\C_W$ and $\C_W$ into $Vec_K$,
and we have an isomorphism of functors $\mu:H_{\gamma}F_W\cong \,^{\gamma}F_W$.
This implies that the two homomorphisms of rings induced by the functors $F_W$, $^{\gamma}F_W$ and 
$H$, $\phi_1:\gamma(K_0)=End_{^{\gamma}\C_W}(\one)\ra End_{\C_W}(\one)\ra End_K(\one)=K$
and $\phi_2:End_{^{\gamma}\C_W}(\one)\ra End_K(\one)$ are equal up to conjugation by an element in $End_K(\one)=K$.
But since $K$ is commutative, this means that $\phi_1=\phi_2$.
The homomorphism $\phi_1$ is given by the natural inclusion followed by $\gamma^{-1}$.
The homomorphism $\phi_2$ is just the natural inclusion.
This implies that $K_0$ is fixed under $\gamma$, and we are done.
\end{proof}
The classical descent theory gives us a description of all the forms of $W$ over $K_1$, where $L\subseteq K_1\subseteq K$ in the following way:
let $H_1<\Ga$ be the stabilizer of $K_1$. 
If $W$ has a form over $K_1$
then $W$ has a basis with respect to which all the structure constants are in $K_1$, and so 
$^{\gamma}W\cong W$ for every $\gamma\in H_1$. 

An additive map $\phi:W\rightarrow W$ is called \textit{$\gamma$-linear} if it satisfies $$\forall x\in K \qquad \phi(\gamma(x)\cdot w) = x\phi(w).$$ 
We consider the group $\ti{H}$ of all invertible additive maps $\phi:W\rightarrow W$ which are $\gamma$-linear for some $\gamma\in H_1$,
and which satisfy $\forall i \;\phi(x_i)=x_i$.
Assuming that $H_1$ stabilizes the isomorphism type of $W$, we have a short exact sequence
\begin{equation}\label{ses-form}1\rightarrow G\rightarrow \ti{H}\rightarrow H_1\rightarrow 1\end{equation}
where $G$ is the automorphism group of the structure $W$. 
This sequence splits if and only if $W$ has a form over $K_1$.
Moreover, the different forms of $W$ correspond to different splittings 
(where two splitting are considered to be equivalent if they differ by conjugation by an element of $G$).

We thus see that we have two conditions for $W$ to have a form over $K_1$: firstly the group $H_1$ should stabilize the isomorphism type of $W$,
and secondly the above short exact sequence should split. 
The discussion we have here, together with Theorem \ref{main1}, shows us that if the first condition holds then 
we already have a form of the category $Rep_K-G$ over $K_1$. 
However, the form we get over $K_1$ might not have a fiber functor.
The obstruction to the existence of a form of the fiber functor is exactly the splitting of the sequence in (\ref{ses-form}), by Theorem \ref{main1}.

\end{section}

\begin{section}{relation to polynomial identities}\label{identities}
In this section we assume that our object $W$ is an associative algebra or an $H$-comodule algebra,
where $H$ is some finite dimensional Hopf algerba. 
We mention that the results of this section can also be applied to identities of non-associative algebras (e.g. for Lie algebras or Jordan algebras).
The formulation will just be more complicated.
A polynomial identity of an associative algebra $W$ is a noncommutative polynomial $f(X_1,X_2\ldots,X_n)$ such that $f(v_1,v_2\ldots,v_n)=0$ for any $v_1,v_2\ldots,v_n\in W$.
For example, if $W$ is a commutative algebra, then $f(X_1,X_2)=X_1X_2-X_2X_1$ is a polynomial identity of $W$. 
Another example for polynomial identity is the famous Amitsur-Levitsky identity:
if $W=M_n(K)$, then the polynomial $$f(X_1,\ldots, X_{2n})=\sum_{\sigma\in S_{2n}}(-1)^{\sigma}X_{\sigma(1)}X_{\sigma(2)}\cdots X_{\sigma(2n)}$$ is an identity, 
and $W$ does not have polynomial identities of lower degree then $2n$. 
Notice that both the Amitsur-Levitsky identity and the commutation identity are multilinear polynomials (that is- in all monomials every variable appears exactly once).
In characteristic zero it is possible to prove that all polynomial identities are derived from multilinear identities, and therefore we will focus on them.

We can think of a polynomial identity in the following way:
Let us denote by $m:W\ot W\rightarrow W$ the multiplication on $W$.
We write $m^{n-1}:W^{\ot n}\rightarrow W$ for the iterated multiplication.
The symmetric group $S_n$ acts on $Hom_{\C_W}(W^{\ot n},W)$.
We can therefore look on the sub $S_n$-module  of $Hom_{\C_W}(W^{\ot n},W)$ generated by $m^{n-1}$.
A polynomial identity of degree $n$ will then be the same as a relation of the form
$$\sum_{\sigma\in S_n} a_{\sigma}\sigma\cdot m^{n-1}$$ where $a_{\sigma}\in K$.
Indeed, such a relation corresponds to the polynomial identity $\sum_{\sigma\in S_n}a_{\sigma}X_{\sigma(1)}\cdots X_{\sigma(n)}=0$.

Let now $H$ be a finite dimensional Hopf algebra defined over a subfield $k\subseteq K$.
An $H$-comodule algebra can be thought of as an algebra $W$ together with an action of $H^*$, such that $$\forall f\in H^* \, f(a\cdot b) = f_1(a)\cdot f_2(b),$$
where we use here the Sweedler Notation $$\Delta(f) = f_1\ot f_2.$$
We recall here the definition of $H$-identities from \cite{Kassel2} (this definition is slightly different from the one in \cite{Alkass}).
For every $i$, let $X^H_i$ be a copy of the vector space $H$.
We will denote the element in $X^H_i$ which corresponds to $h$ by $X^h_i$.
The tensor algebra $\T=T(\oplus_i X^H_i)$ is an $H$-comodule algebra, where the coaction is given on the generators by:
$$\rho(X^h_i) = X^{h_1}_i\ot h_2.$$
an element $P\in \T$ is a \textit{graded identity} of $W$ if for every homomorphism $\phi:\T\rightarrow W$ of $H$-comodule algebras it holds that $\phi(P)=0$.
We would like to write the identities as linear relations on morphisms in our category.
Since $H$ is finite dimensional, it is known that $H$ is isomorphic with $H^*$ when considered as a left $H^*$-module (or a right $H$-comodule). 
A canonical choice of a basis element will be the left integral $\l$ of $H$, which is unique up to a nonzero scalar. 
Any homomorphism of $H$-algebras $\T\rightarrow W$ is uniquely defined by its restriction to $\oplus_i X^H_i$. Its restriction to $X^H_i$ will be a map of $H$-comodules,
and therefore it will be uniquely defined by the image of $X^{\l}_i$.
Let us write $\{f^j\}$ for a basis of $H^*$. An identity will thus be a noncommutative polynomial $P$
in the variables $f^j\cdot X^{\l}_i$, which will vanish upon any instance of $X^{\l}_i\mapsto v_i\in W$.
A multilinearization shows that this identity is equivalent to a noncommutative polynomial in the variables $\{f^j\cdot X^{\l}_i\}$, 
in which every monomial contains $X^{\l}_i$ exactly once.

The algebra $((H^*)^{op})^{\ot n}$ acts on $Hom_{\C}(W^{\ot n}, W)$ by its action on the tensor factors of $W^{\ot n}$. 
The group $S_n$ acts on the same space as before- by permuting the tensor factors of $W^{\ot n}$. 
Together, we get an action of the crossed product algebra $H_n:=((H^*)^{op})^{\ot n} * S_n$ 
where the action of $S_n$ is by permuting the tensor factors of $((H^*)^{op})^{\ot n}$.
We conclude this discussion in the following lemma:
\begin{lemma}
Let $\{t_i\}$ be a basis for $H_n$ over $k$. An $H$-polynomial identity of $W$ of degree $n$ is equivalent to a linear relation of the form
$$\sum_i a_i t_i\cdot m^{n-1}=0.$$\end{lemma}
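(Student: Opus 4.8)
The plan is to unwind the definitions so that the lemma becomes a translation of the preceding discussion into the language of morphisms of $\C_W$, keeping in mind that since the coefficients $a_i$ are allowed to lie in $K$ the relation ``$\sum_i a_i t_i\cdot m^{n-1}=0$'' is to be read inside $Hom_{\C_W\ot_{K_0}K}(W^{\ot n},W)$, equivalently (as the fiber functor $F$ is faithful) as a relation among the underlying linear maps in $Hom_K(W^{\ot n},W)$. First I would recall that in characteristic zero a graded $H$-identity of degree $n$ is equivalent to a multilinear one, namely a noncommutative polynomial $P$ in the variables $\{f^j\cdot X^{\l}_i\}$ in which each $X^{\l}_i$ occurs exactly once, and that a homomorphism of $H$-comodule algebras $\phi:\T\ra W$ amounts to a free choice of the elements $v_i:=\phi(X^{\l}_i)\in W$, subject only to $\phi(f\cdot X^{\l}_i)=T_f(v_i)$, where $T_f\in Hom_{\C_W}(W,W)$ is the structure morphism of $W$ attached to $f\in H^*$. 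Thus $P$ is an identity of $W$ precisely when a certain multilinear expression in $v_1,\dots,v_n$ vanishes for every choice of the $v_i\in W$.

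Next I would make the dictionary between monomials and elements of $H_n$ explicit. Using the $H^*$-module structure of $W$ to collapse each string of actions, a monomial of $P$ can be written as $(g_1\cdot X^{\l}_{\sigma(1)})\cdots(g_n\cdot X^{\l}_{\sigma(n)})$ with $g_k\in H^*$ and $\sigma\in S_n$, and evaluating it at $\phi$ gives $m^{n-1}(T_{g_1}v_{\sigma(1)}\ot\cdots\ot T_{g_n}v_{\sigma(n)})$. As a linear function of $v_1\ot\cdots\ot v_n$ this is exactly $(t\cdot m^{n-1})(v_1\ot\cdots\ot v_n)$, where $t=(g_1\ot\cdots\ot g_n)*\sigma\in H_n$ acts on $Hom_{\C}(W^{\ot n},W)$ as prescribed: $\sigma$ permutes the tensor factors of $W^{\ot n}$, the tuple $(g_1,\dots,g_n)$ acts through $(H^*)^{\ot n}$ on those factors, and $m^{n-1}$ is the iterated multiplication morphism, which lies in $\C_W$ because $m$ is one of the structure tensors of $W$. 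Since the decomposable tensors $v_1\ot\cdots\ot v_n$ span $W^{\ot n}$, the polynomial $P$ is an identity of $W$ if and only if $\sum_P c_P\, t(P)\cdot m^{n-1}=0$ as a linear map $W^{\ot n}\ra W$. Conversely, expanding a $k$-basis vector $t_i$ of $H_n$ into a $k$-combination of elements $(f^{j_1}\ot\cdots\ot f^{j_n})*\sigma$ and running this correspondence backwards converts any relation $\sum_i a_i\, t_i\cdot m^{n-1}=0$ into a multilinear $H$-polynomial that is an identity of $W$. This gives the stated equivalence; combined with Lemma \ref{foridentities} (applied to the morphisms $t_i\cdot m^{n-1}$, which lie in $Hom_{\C_W}(W^{\ot n},W)$ once one uses $k\subseteq K_0$) it is the input to Theorem \ref{main5}.

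The content here is mostly bookkeeping, and that is where the care is needed. First, one must check that the crossed-product structure of $H_n=((H^*)^{op})^{\ot n}*S_n$ — in particular why the passage to $(H^*)^{op}$ and the chosen side on which $S_n$ acts on $(H^*)^{\ot n}$ are the correct ones — is exactly what makes $t\mapsto t\cdot m^{n-1}$ compatible with multiplication of monomials; this in turn fixes the conventions for the permutation morphisms of tensor factors and for the left $H^*$-action on $W$ coming from $f(ab)=f_1(a)f_2(b)$. Second, one must be careful about coefficient fields: $m$ together with a $k$-basis $\{f^j\}$ of the operators attached to $H^*$ are structure morphisms of $\C_W$, so each $t_i\cdot m^{n-1}$ lies in $Hom_{\C_W}(W^{\ot n},W)$ (using $k\subseteq K_0$), whereas the $a_i$ are permitted to range over $K$, which is why the final relation is read in $\C_W\ot_{K_0}K$, equivalently among linear maps via the faithful functor $F$. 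I expect the only real (and still modest) obstacle to be pinning down these conventions so that ``monomials of multilinear $H$-polynomials of degree $n$'' and ``the image of $H_n$ in $Hom_{\C}(W^{\ot n},W)$ under $t\mapsto t\cdot m^{n-1}$'' match on the nose rather than merely up to relabelling.
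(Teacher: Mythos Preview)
Your proposal is correct and follows essentially the same approach as the paper: the lemma is stated there without a separate proof, as the conclusion of the preceding discussion, and what you do is precisely a careful expansion of that discussion—translating multilinear $H$-identities into linear relations among the morphisms $t\cdot m^{n-1}$ via the dictionary between monomials $(g_1\cdot X^{\l}_{\sigma(1)})\cdots(g_n\cdot X^{\l}_{\sigma(n)})$ and elements $(g_1\ot\cdots\ot g_n)*\sigma$ of $H_n$. Your remarks about the $(H^*)^{op}$ convention and the coefficient fields are appropriate caveats that the paper leaves implicit.
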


The conclusion of this is that both regular polynomial identities and $H$-polynomial identities can be understood as linear relations between morphisms in the category $\C_W$.
We are now ready to prove Theorem \ref{main5}.
\begin{proof}[Proof of Theorem \ref{main5}]
Assume that $W$ is an $H$-comodule algebra over a field $K$, and that the Hopf algebra $H$ is defined already over a subfield $k$ of $K$.
We have seen that $H$-polynomial identities correspond to the vanishing of linear combinations of morphisms in $\C_W$ over $K$. 
We have proved in Lemma \ref{foridentities} that if morphisms in $\C_W$ are linearly dependent over $K$,
then they are linearly dependent already over $K_0$. This finishes the proof.
\end{proof}
Notice that polynomial identities give us in general less information on the algebra than the category $\C_W$. 
Indeed, the polynomial identities of $W$ and of $W\oplus W$ are the same, and therefore the polynomial identities cannot define the isomorphism type of the algebra.
The polynomial identities do define the algebra if one makes some extra assumptions on the algebra.
In \cite{AH} Aljadeff and Haile proved that if $W$ is a simple $H$-comodule algebra where $H=kG$ is a group algebra, then the identities of $W$ determine $W$.
In \cite{Kassel2} Kassel proved that $H$-identities can be used to distinguish between isomorphism classes of different Hopf Galois extensions of the ground field 
for the Taft algebras $H_{n^2}$ and for the Hopf algebras $E(n)$.

We give here an example:
Let $n$ be a natural number, and let $G=C_n\times C_n$ be generated by $g$ and $h$.
Let $\alpha$ be the two-cocycle on $G$ defined by $\alpha(g^ih^j,g^kh^l)=\zeta^{jk}$ where
$\zeta$ is a primitive $n$-th root of unity.
Then the polynomial identity $X_hX_g=\zeta X_gX_h$ is defined over $\Q(\zeta)$, and we will prove in Section \ref{examples2}
that $\Q(\zeta)=K_0$.
In Section \ref{examples0} we will see an example for an associative algebra in which all the polynomial identities are already defined over a proper subfield of $K_0$.
\end{section}
\begin{section}{First examples}\label{examples4}
We begin with the example $(W,\{x_i\})$ in which the set of tensors $\{x_i\}$ is empty. 
In this case, the group $G$ is the entire group $GL(W)$, and the objects in the category $\C_1$ are all direct sums of objects of the form $W^{p,q}$.
The only nontrivial endomorphisms of $W^{p,0}$ will be those arising from the action of the symmetric group on $W^{p,0}$ by permuting the tensor factors.
One can show that in this case the Karoubian envelope (a.k.a. idempotent completion) of $\C_1$ will already be an abelian category,
where the Karoubian envelope is the category we get of $\C_1$ by adding to it only kernels and cokernels of idempotent morphisms
(for this, we use the fact that the finite group $S_p$ acts on $W^{p,q}$ in a semisimple way).
This implies that the Karoubian envelope of $\C_1$ is equivalent to $\C_W$ in this case.
We then recover the well known Schur-Weyl duality, which says that the endomorphism ring of $W^{p,0}$ as a $GL(W)$-module 
is generated by the action of the symmetric group.
We also recover the fact that all the finite dimensional rational representations of $GL(W)$ can be constructed from the canonical representation $W$ by taking tensor products,
subrepresentations, quotients, and duals.

We continue with the case where the set $\{x_i\}$ contains exactly one element of type $(1,1)$.
Let $W$ then be a finite dimensional vector space over $K$, and let $T:W\ra W$ be an endomorphism.
We consider the structure $(W,\{T\})$. 
If we identify $End_K(W)$ with $W\ot W^*$, then the linear functional 
$$End_K(W)\cong W\ot W^*\stackrel{c_{W,W^*}}{\ra} W^*\ot W \stackrel{ev}{\ra} K$$ is the trace function (it is easy to prove this, for example by choosing a basis for $W$).
We thus see that the field $K_0$ will contain all the elements $tr(T), tr(T^2),\ldots$.
The coefficients of the characteristic polynomial can be written as polynomials over $\Q$ in $tr(T^i)$.
We thus see that the field of invariants $K_0$ contains all the coefficients $c_i$ of the characteristic polynomial.
In case $T$ is semisimple, it is easy to show that $K_0= \Q(c_i)$, $(W,\{T\})$ has a form over $K_0$ and it is in fact unique.
In case $T$ is not semisimple, the field $K_0$ might be bigger. 
However, by studying the possible Jordan decompositions one can prove that it is still true that $(W,\{T\})$ has a unique form over $K_0$.
For example, if $K$ contains $\sqrt{2}$, $W=K^4$ and $$ T = \begin{pmatrix}
\sqrt{2} & 0 & 0 & 0 \\ 
0 & \sqrt{2} & 0 & 0 \\
0 & 0 & -\sqrt{2} & 1 \\
0 & 0 & 0 & -\sqrt{2}
\end{pmatrix}
$$
then the characteristic polynomial of $T$ is $(x^2-2)^2$, and all the coefficients are in $\Q$.
However, the image of $T^2-2$ is the one dimensional space spanned by $e_3$. 
The action of $T$ on that space will be by multiplication by $-\sqrt{2}$, and therefore $\sqrt{2}\in K_0$ 
(and since $W$ has a form over $\Q(\sqrt{2})$ we get that $K_0=\Q(\sqrt{2})$).
\end{section}

\begin{section}{Example: the field of definition of an associative algebra}\label{examples0}
Let $W$ be the following three dimensional algebra defined over $\Q(a)$
(where $a\neq 1,0$ can be algebraic or transcendental over $\Q$):
$$W=span\{x,y,z\}$$
$$xz = zx = yz = zy = z^2 = 0$$
$$z= x^2 = y^2 = xy = a^{-1}yx$$
Notice that $W$ does not have a unit, and that it is nilpotent of rank 3.
We will construct certain objects and morphisms in the category $\C_W$,
and we will prove that $a\in K_0$. This will prove that $K_0=\Q(a)$, 
since the algebra $W$ has a form over that field.

We denote the multiplication map by $m:W\ot W\rightarrow W$.
Consider first the subobject $W^2=Im(m) = span(z)$.
The multiplication induces a map $W/W^2\ot W/W^2\rightarrow W^2$.
This gives us two morphisms $W/W^2\rightarrow W^2\ot (W/W^2)^*$.
If we denote the dual basis $\{\bar{x},\bar{y}\}$ of $W/W^2$ by $\{e,f\}$, we get that these maps are invertible and given by:
$$T_1(\bar{x})= e\ot z + f\ot z$$
$$T_1(\bar{y})= ae\ot z + f\ot z$$
and 
$$T_2(\bar{x}) = e\ot z + af\ot z$$
$$T_2(\bar{y})= e\ot z + f\ot z.$$
The composition $T_2^{-1}T_1$ will give us a morphism $W/W^2\rightarrow W/W^2$.
A direct calculation shows that the trace of this morphism is $a+1$. This implies that $a\in K_0$ as desired.

The element $a$ cannot be seen via the polynomial identities of $W$.
Indeed, since $W$ is a nilpotent algebra of rank 3, any monomial of rank $\geq 3$ 
will be an identity, and in degree 2 there is no $b$ such that the polynomial $f(X_1,X_2)=X_1X_2+bX_2X_1$ is an identity.
All the polynomial identities are therefore already defined over $\Q$.
\end{section}

\begin{section}{Example: central simple algebras}\label{examples1}
\begin{subsection}{A splitting field for a central simple algebra}\label{nok0}
Let $D$ be a central simple algebra of dimension $n^2$ over a field $k$ of characteristic zero.
The algebra $D$ splits over an algebraic extension $L$ of $k$ if and only if
$D\ot_k L$ has a representation of dimension $n$ over $L$.
Assume then that $L$ is such an extension, and that $V$ is such a representation.
For every $d\in D$, we have a tensor $x_d\in V\ot V^*$ which gives the action of $d$ on $V$.
We construct the fundamental category for $(V,\{x_d\})$.
The field $K_0$ must include $k$ (since the traces of the tensors $x_d$ are in $K_0$),
and will in fact coincide with it.
The group $G$ will be $\mathbb{G}_m$, the multiplicative group,
since the only elements in $GL(V)$ commuting with all the $x_d$ tensors will be scalar multiplications.
The resulting short exact sequence we will get (see Section \ref{sec:galois}, here $\Gamma= Gal(L/K)$)
$$1\rightarrow \mathbb{G}_m\rightarrow \ti{G}\rightarrow\Ga\rightarrow 1$$
will correspond to an element in $H^2(\Ga,\mathbb{G}_m)$ which is the class of $[D]$ in $Br(L/k)$.
As a classifying algebra we can take $$A=k[V\oplus V^*]/(f(v)=1).$$
The invariant subalgebra will then be $$B=Hom_{\C}(\one,A)= k[D]/(tr(d)=1,rank(d)=1).$$
A point $B\rightarrow K_1$ will give us an element $e$ in $D_{K_1}$ which is an idempotent and for which $dim_{K_1}(D_{K_1}e)=n$.
The representation $D_{K_1}e$ will then be the desired representation.
In particular, if $D$ does not split over $k=K_0$, then we will not have a fiber functor $\C_W\ra Vec_k$.
\end{subsection}

\begin{subsection}{Central simple algebras and generic division algebras}
Let $n$ be a natural number, and let $W=M_n(\Q)$ be the $n\times n$ matrix algebra over $\Q$.
Let us denote by $m$ the multiplication map $m:W\ot W\rightarrow W$.
The fundamental category of $(W,\{m\})$ will give us a generic form of the matrix algebra.
This will in fact give us a localization of the generic division algebra which appears in the work of Procesi (see \cite{Formanek}).
The group $G$ here will be $PGL_n$, which is reductive. We will therefore receive a finitely generated commutative $\Q$-algebra $B$,
and a $B$-algebra $\ti{W}$ which is free of rank $n^2$ as a $B$-module, with the following property:
for every homomorphism $\phi:B\rightarrow K$ from $B$ into a field $K$ of characteristic zero the algebra $W_{\phi}=W\ot_B K$
will be a central simple $K$-algebra of dimension $n^2$, and every central simple algebra will be received in this way.

Before constructing explicitly the generic division algebra, let us look on some morphisms in $\C_W$.
The multiplication $m$ is an element in $Hom_{\C_W}(W\ot W,W)\subseteq W\ot W^*\ot W^*$. By pairing $W$ with one of the copies of $W^*$
we get an element in $Hom_{\C_W}(W,\one)\subseteq W^*$.
A direct calculation shows that this element will be the trace of the (left or right) regular representation of $W$
(this will be true for any algebra). By multiplying by $\frac{1}{n}$ we get the usual trace.
Now, if $t$ is a natural number, and $\sigma\in S_t$ is given by $\sigma=(i_1,i_2,\ldots i_r)(j_1,j_2,\ldots,j_s)...$
then we have the morphism $T_{\sigma}\in Hom_{\C_W}(W^{\ot t},\one)$ given by $$T_{\sigma}(X_1\ot X_2\ldots,X_t) =
tr(X_{i_1}X_{i_2}\ldots X_{i_r})tr(X_{j_1}X_{j_2}\ldots X_{j_s})... .$$
By the work \cite{Procesi} of Procesi we know that these are all the $PGL_n$-invariants.

We would like to construct a classifying algebra for $W$.
We will take the classifying algebra to be a localization of $Sym[W^*\oplus W^*]$.
The algebra $Sym[W^*\oplus W^*]$ will not be a classifying algebra itself, because it has too many points.
However, it will be a classifying algebra after we localize by a specific polynomial.
We begin with recalling Lemma 14 from \cite{Formanek2}
\begin{lemma}
Let $M_1,M_2,\ldots M_{n^2}$ be $n^2$ $n\times n$ matrices over a field $K$ of characteristic zero.
Then they will form a basis for $M_n(K)$ if and only if
$$
f(M_1,\ldots,M_{n^2})=$$ $$\sum_{\sigma\in S_{n^2}}(-1)^{\sigma}tr(M_{\sigma(1)})tr(M_{\sigma(2)}M_{\sigma(3)}M_{\sigma(4)})\cdots
tr(M_{\sigma(n^2-2n+2)}\cdots M_{\sigma(n^2)})$$ is nonzero.
\end{lemma}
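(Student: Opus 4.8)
The plan is to identify $f$, up to a single nonzero constant depending only on $n$, with the coordinate determinant on $M_n(K)$ relative to the basis of matrix units, and then to check that this constant does not vanish.

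First I would observe that $f$ is a $K$-multilinear function of the $n^2$ arguments $M_1,\dots,M_{n^2}$: in each summand a given $M_i$ occurs exactly once, inside whichever trace factor contains the slot $\sigma^{-1}(i)$, so every term is linear in each $M_i$, and hence so is $f$. Next, for any $\tau\in S_{n^2}$, reindexing the sum by $\sigma\mapsto\tau^{-1}\sigma$ (and using $(-1)^{\tau^{-1}\sigma}=(-1)^{\tau}(-1)^{\sigma}$) gives $f(M_{\tau(1)},\dots,M_{\tau(n^2)})=(-1)^{\tau}f(M_1,\dots,M_{n^2})$; since $\mathrm{char}\,K=0$, an antisymmetric multilinear form is alternating. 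The space of alternating $K$-multilinear maps $M_n(K)^{\times n^2}\to K$ is one-dimensional, spanned by $\Delta(M_1,\dots,M_{n^2}):=\det\big([M_1]\,|\,\cdots\,|\,[M_{n^2}]\big)$, the determinant of the matrix whose columns are the coordinate vectors of the $M_i$ in the matrix-unit basis. Hence $f=c_n\,\Delta$ for a scalar $c_n\in K$ depending only on $n$, and since $\{M_i\}$ is a basis of $M_n(K)$ precisely when $\Delta(M_1,\dots,M_{n^2})\ne 0$, the lemma is equivalent to the single assertion $c_n\ne 0$, i.e. that $f$ is not identically zero.

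To prove $c_n\ne 0$ it suffices — since by the previous paragraph the (non)vanishing of $f$ on a basis does not depend on the order in which its elements are listed — to evaluate $f$ on the matrix units $\{E_{ab}\}$ in one convenient order. Recall $tr(E_{a_1b_1}\cdots E_{a_mb_m})$ equals $1$ if the directed edges $(a_1,b_1),\dots,(a_m,b_m)$ traverse a single closed walk read in this cyclic order (each head matching the next tail, cyclically) and $0$ otherwise. The complete digraph on $\{1,\dots,n\}$ with a loop at each vertex has $n^2$ edges and in-degree equal to out-degree ($=n$) at every vertex; assigning to ``step'' $k$ the $2(k-1)+1=2k-1$ edges $(i,k),(k,i)$ for $i<k$ together with the loop $(k,k)$ yields, by Euler's theorem applied at each step, pairwise edge-disjoint closed trails $C_1,\dots,C_n$ of lengths $1,3,5,\dots,2n-1$ exhausting all $n^2$ edges. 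Listing the matrix units block by block along $C_1,\dots,C_n$, the identity permutation contributes $(-1)^{\mathrm{id}}\cdot 1\cdot 1\cdots 1=1$ to $f$.

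The main obstacle is then to show that the contributions of the remaining $\sigma\in S_{n^2}$ do not cancel this $1$. A permutation $\sigma$ contributes a nonzero term only if, for each $k$, the $2k-1$ matrix units it assigns to the $k$-th trace form a balanced connected set of edges that can be read as a single closed walk; the ordered set-partition these assignments induce is forced to respect the block sizes $1,3,\dots,2n-1$ because those sizes are pairwise distinct, and within a fixed block the admissible orderings split into classes under cyclic rotation, a rotation of an odd-length block being an even permutation. The technical heart is the sign bookkeeping: one enumerates the finitely many ways to partition the $n^2$ edges into closed trails of lengths $1,3,\dots,2n-1$, checks that all $\sigma$ realizing a given such partition contribute with one and the same sign, and verifies that the resulting count is a nonzero integer (hence nonzero in $K$, as $\mathrm{char}\,K=0$). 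This combinatorial sign-tracking — carried out in Formanek's original argument — is the step demanding the most care; once it is done, $c_n\ne 0$ and the lemma follows.
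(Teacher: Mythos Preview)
The paper does not prove this lemma at all: it merely recalls it as ``Lemma 14 from \cite{Formanek2}'' and uses it as a black box. So there is no in-paper argument to compare against; the relevant question is simply whether your proposal is a self-contained proof.

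Your first two paragraphs are correct and are the standard reduction: $f$ is multilinear and alternating in its $n^2$ arguments, hence $f=c_n\Delta$ with $\Delta$ the coordinate determinant, and the lemma is equivalent to $c_n\neq 0$. The Euler-trail construction in your third paragraph does produce one permutation contributing $+1$ when the matrix units are listed along trails $C_1,\dots,C_n$ of lengths $1,3,\dots,2n-1$. That part is fine.

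The gap is exactly where you say it is, and your outline for closing it is not sound as written. You assert that ``all $\sigma$ realizing a given such partition contribute with one and the same sign,'' but this is not what your argument shows. Cyclic rotations of an odd-length block are indeed even permutations, so they preserve the sign; however, a fixed edge set of size $2k-1$ may admit several \emph{distinct} closed Eulerian circuits which are not cyclic rotations of one another, and the permutations of the block corresponding to two such circuits can differ by an odd permutation. So grouping by set-partition does not immediately give terms of a single sign. In other words, producing one nonzero summand and invoking a sign-uniformity principle does not suffice; one actually has to compute (or bound) the full signed count, and you have not done that. Since you end by deferring that computation to Formanek, your proposal is, like the paper, ultimately a citation of \cite{Formanek2} for the only nontrivial step, dressed with the (correct) preliminary reduction.
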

This enables us to construct a classifying algebra. 
Indeed, We can take $Sym[W^*\oplus W^*\oplus \cdots\oplus W^*]_f$ (the coordinate algebra on the space of $n^2$ matrices, 
localized at the polynomial which says that they form a basis).
We can also get a smaller classifying algebra: we consider $Sym[W^*\oplus W^*]$, the coordinate algebra for the space of two $n\times n$ 
matrices which we shall denote $X$ and $Y$.
We write $M_{ni+j}=X^iY^j$ for $0\leq i,j \leq n-1$ and we define $D(X,Y):=f(M_1,\ldots, M_{n^2})$.
The localization $A:=Sym[W^*\oplus W^*]_D$ will thus give us a classifying algebra.
The reason for this is the following: The algebra $A$ will have $W^*\oplus W^*$ in degree one.
These two copies of $W^*$ will give us two morphisms $\phi_X,\phi_Y:\one\rightarrow W\ot A$ by the coevaluation of $W$.
By using the multiplication in $W$, we get $n^2$ maps $\phi_{X^iY^j}:\one\rightarrow W\ot A$.
We can extend scalars, take the direct sum and get a map $A^{n^2}\rightarrow W\ot A$.
The fact that $D$ is invertible implies that this map is invertible, and therefore $A$ is a classifying algebra for $\C_W$.
The resulting algebra $\ti{W}=Hom_{\C_W}(\one,W\ot A)$ is an Azumaya algebra which is free of rank $n^2$ over its center, $Hom_{\C_W}(\one,A)=\Q[W\oplus W]^{PGL_n}_D$.
This algebra will specialize to any central simple algebra of dimension $n^2$ over any field of characteristic zero.
This algebra is a localization of the algebra $\bar{R}$ introduced by Procesi. See the paper \cite{Formanek} by Formanek for more details.
\end{subsection}
\end{section}

\begin{section}{Examples: comodule algebras}\label{examples2}
Let $H$ be a finite dimensional $k$-Hopf algebra where $k\subseteq K$ is a subfield.
An $H_K:=H\ot_k K$-comodule algebra over $K$ is a $K$-algebra $W$ equipped with a right coaction of $H_K$: $\rho:W\rightarrow W\ot_K H_K$ 
which is also an algebra map.
For example, if $H=kG$ is a group algebra, then an $H$-comodule algebra is a $G$-graded algebra.
If $H=(kG)^*$, then an $H$-comodule algebra is an algebra $W$ equipped with an action of $G$ by algebra automorphisms.
Comodule algebras play an important role in the theory of Hopf algebras.
Of particular importance are comodule algebras of the form $^{\alpha}H$, where $\alpha:H\ot H\rightarrow K$ is some (convolution invertible) two-cocycle on $H$.
These algebras are identical with $H$ as $H$-comodules, and their multiplication is given by the formula
$$x\cdot_{\alpha} y = \alpha(x_1,y_1)x_2y_2.$$
The two-cocycle condition on $\alpha$ is equivalent to the associativity of the algebra.
It is known that any comodule algebra which is isomorphic with $H$ as an $H$-comodule is of this form.

We will now use the fundamental category to study comodule algebras.
We will consider the following three cases:
Group algebras, Taft algebras, and product of Taft algebras.
All our constructions can be generalized easily to the Hopf algebras $E(n)$ and to the monomial Hopf algebras.
We will explain, for $H$ a Taft algebra or a product of Taft algebras, how can one classify all the cocycles on $H$ by using the fundamental category
(this classification is known. See for example \cite{Masuoka} for the classification of two-cocycles on Sweedler's Hopf Algebra, which is the Taft Hopf algebra in dimension 4).
In \cite{Alkass} Aljadeff and Kassel have constructed an algebra $\A^{\alpha}_H$ which is a generic form of $^{\alpha}H$
(they have proved that it specialize to any form of $^{\alpha}H$ and the results of \cite{Kasmas} show that it specialize only to forms of $^{\alpha}H$).
In \cite{Iyerkas} Iyer and Kassel have studied the algebra $\B^{\alpha}_H=(\A^{\alpha}_H)^{coH}$ for Taft algebras, monomial Hopf algebras and the $E(n)$ algebras.
The construction we present here will give us a generic form over a basis of smaller Krull dimension (for the case of Taft algebras).

So let $\alpha$ be a two-cocycle on $H$ with values in $K$.
The fundamental category $\C_W$ of $W=\,^{\alpha}H$ will thus be constructed from the following tensors:\\
1. The multiplication $m:W\ot W\rightarrow W$.\\
2. For every $f\in H^*$ the action $T_f:W\rightarrow W$.\\ 
In terms of the map $\rho:W\ra W\ot H$, the map $T_f$ is given by $T_f=(Id\ot f)\rho$.
Let us determine first the fundamental group of the category. 
Since $W\cong H\cong H^*$ as an $H^*$-module, the only maps $W\rightarrow W$ which commute with the tensors $T_f$ will be of the form $h\mapsto g(h_1)h_2$
(where we identify $W$ with $H$). Now, such an element $g\in H^*$ will commute with the multiplication if and only if 
$g$ is a group like element in the dual of the twisted Hopf algebra
$^{\alpha}H^{\alpha^{-1}}$. So $\C_W$ is a $K_0$-form of $Rep_K-G((^{\alpha}H^{\alpha^{-1}})^*)$.
Notice in particular that this group is finite. 
Because we consider all tensors arising from the action of $H^*$, we will have in particular the action by all the scalars in $k$.
This shows that $k$ is necessarily contained in $K_0$, the field of invariants of $\C_W$. The field $K_0$ might be bigger though.

\begin{subsection}{Group algebras}
Let $G$ be a finite group, and let $H=\Q G$.
A two-cocycle on $H$ with values in $K$ will be the familiar object from group cohomology, namely a function $\alpha:G\times G\rightarrow K^{\mult}$ such that
$\alpha(x,y)\alpha(xy,z)=\alpha(y,z)\alpha(x,yz)$ for every $x,y,z\in G$.
Two two-cocycles $\alpha$ and $\beta$ are equivalent (or cohomologous) in case there is a function $\lambda:G\rightarrow K^{\mult}$ such that
$\alpha(x,y)=\beta(x,y)\lambda(x)\lambda(y)\lambda^{-1}(xy)$ for every $x,y\in G$.
The twisted group algebra $W=K^{\alpha}G$ has a basis $\{U_g\}_{g\in G}$ and the multiplication is defined by the formula 
$$U_x\cdot U_y = \alpha(x,y)U_{xy} \textrm{ for }x,y\in G.$$
The coaction of $KG$ is given by $\rho(U_g)=U_g\ot g$.
In other words, the action of $e_g\in (KG)^*$ is the projection $e_g:U_x\mapsto \delta_{x,g}U_x$.

We begin by describing some objects and morphisms in the category $\C_W$.
We will denote the image of $e_g$ in $W$ by $W_g$. We have an isomorphism $\one\rightarrow W_1$ given by sending $1\in K$ to the identity of $W$.
We will identify $\one$ and $W_1$ henceforth.
For every $g\in G$, the restriction of the multiplication map $W_g\ot W_{g^{-1}}\rightarrow \one$ will give us an isomorphism between
$W_g^*$ and $W_{g^{-1}}$.
The coevaluation will then be 
$$coev_g:\one\rightarrow W_g\ot W_{g^{-1}}$$ $$1\mapsto U_g\ot U_g^{-1}.$$
So for every $g,h\in G$, we have the following map in $\C_W$:
$$d_{g,h}:\one\rightarrow W_g\ot W_{g^{-1}}\ot W_h\ot W_{h^{-1}} \rightarrow W_g\ot W_h \ot W_{g^{-1}}\ot W_{h^{-1}}\rightarrow W_{ghg^{-1}h^{-1}}$$
which sends 1 to $w_{g,h}=U_gU_hU_g^{-1}U_h^{-1}=c_{g,h}U_{ghg^{-1}h^{-1}}$ for some $c_{g,h}\in K^{\mult}$.
This gives us an isomorphism in $\C_W$ between $\one$ and $W_{ghg^{-1}h^{-1}}$. 
This means that $w_{g,h}$ must be contained in any form of $W$.
This also means that if $$g_1h_1g_1^{-1}h_1^{-1}g_2h_2g_2^{-1}h_2^{-1}\cdots g_rh_rg_r^{-1}h_r^{-1}=1\in G,$$ then the product
$w_{g_1,h_1}w_{g_2,h_2}\cdots w_{g_r,h_r}\in K^{\times}$ will be a scalar which will be an invariant of $W$, and will thus be contained in $K_0$.

We can understand this invariant by the Hopf formula and the universal coefficients theorem (see Section 2 of \cite{AHN} for more details).
Let $F$ be the free group with generators $x_g$, and let $R$ be the kernel of the homomorphism $F\rightarrow G$ $x_g\mapsto g$.
Then the Hopf formula says that the Schur multiplier $M(G):=H_2(G,\Z)$ is isomorphic with $$([F,F]\cap R )/ [F,R],$$ and the universal coefficients Theorem implies that 
$$H^2(G,K^{\times})\cong Hom_{\Z}(M(G),K^{\times})$$ (we use here the fact that $K$ is algebraically closed).
The cocycle $\alpha$ thus induces a homomorphism $\ti{\alpha}:[F,F]\cap R\rightarrow K^{\times}$ which vanishes on $[F,R]$. 
If $t:=[x_{g_1},x_{h_1}]\cdots [x_{g_r},x_{h_r}]\in [F,F]\cap R$ then a direct calculation shows that $\ti{\alpha}(t) = w_{g_1,h_1}\cdots w_{g_r,h_r}$.

We thus see that the image of $\ti{\alpha}$ is contained in $K_0$.
Since $G$ is finite, $\alpha$ is equivalent to a cocycle whose values are roots of unity. The image of $\ti{\alpha}$ is therefore generated by some root of unity $\mu$.

We will now describe the generic form of $W$. We will show that $W$ has a form over $\Q(\mu)$ and therefore $K_0=\Q(\mu)$.
Notice that already in $\C_W$ we can write $W=(\oplus_{g\in G'}) W_g\oplus (\oplus_{g\notin G'}W_g)$, and the first direct summand is isomorphic in $\C_W$ with $\one^{|G'|}$.
The group $G/G'$ is finite abelian, and we can therefore write $$G/G'\cong \langle \bar{x_1}\rangle\cdots \langle \bar{x_r}\rangle.$$
We write $n_i$ for the order of $\bar{x_i}$.
We consider the object $M=W_{x_1}^*\oplus W_{x_2}^*\oplus \cdots\oplus W_{x_r}^*$.
We let $f_i$ be a basis element for $W_{x_i}^*$ for every $i$. 
We consider the localization $A=Sym(M)_f$ where $f=f_1f_2\cdots f_r$.
Then $A$ is a classifying algebra for $W$.
Indeed, $A\cong A\ot W_{x_i}$ for every $i$.
We also have that $W_g\cong \one$ for every $g\in G'$, and therefore $A\ot W_g\cong A$ for every $g\in G'$.
Since the elements $x_i$ and the elements of $G'$ generate $G$, we can use the multiplication in $W$ to get an isomorphism $A\ot W_g\cong A$ for every $g\in G$,
and therefore $A\ot W\cong A^{|G|}$ as desired. Since the category $\C_W$ is semisimple, $A$ is a classifying algebra.

For every $i$, we can write $U_{x_i}^{n_i} = c_iw_{g^i_1,h^i_1}\cdots w_{g^i_t,h^i_t}$ for some elements $g^i_j,h^i_j\in G$, $c_i\in K^{\times}$.
We can change $U_{x_i}$ by a scalar and we can therefore assume that $c_i=1$ for every $i$. 
The generic form $\ti{W}$ will then be generated by the elements $U_gU_hU_{g^{-1}}U_{h^{-1}}$ and the elements $f_iU_{x_i}$. 
The base ring will be $B_W=K_0[(f_i^{n_i})^{\pm 1}]$.
By taking the algebra generated by $U_{x_i}$ and $w_{g,h}$ we get a form for $W$ defined already over $\Q(\mu)$. This shows that $K_0=\Q(\mu)$.

In \cite{AHN} Aljadeff Haile and Natapov have defined an algebra $U_G$ which they call the universal $G$-graded algebra.
They have defined the algebra using the polynomial graded identities of $W$, 
and have described it as the subalgebra of $W\ot_K K[t_g^{\pm 1}]_{g\in G}$ generated over $\Q$ by the elements $U_g\ot t_g$.
The algebra $U_G$ will be the generic form associated to the classifying algebra $Sym(W^*)_f$ where $f=\prod_g f_g$. The resulting base ring will be bigger though.
Since we choose $M$ instead of $W^*$, we get a smaller base algebra of smaller Krull dimension: the rank of the abelian group $G/G'$ instead of $|G|$.
\end{subsection}

\begin{subsection}{Taft Algebras}
We begin by recalling the definition: $$H_n=k<g,x>/(g^n-1,x^n,gxg^{-1}-\zeta x)$$ where $\zeta$ is a primitive $n$-th root of unity. The comultiplication is given by
$\Delta(g)=g\otimes g$ and $\Delta(x)=x\ot 1 + g\ot x$. The Hopf algebra $H$ is defined over $k:=\Q(\zeta)$.
Let $W=\, ^{\alpha}H$ be defined over $K$.
We have an isomorphism of $H$-comodules $H\cong W$. We write the image of $h\in H$ by $\til{h}\in W$.
We are going to use the fundamental category of $W$ in order to classify all two $H$-cocycles over $K$ up to equivalence.

We will do the following: we will use the maps we have in the category $\C_W$ in order to decompose $W$ 
as the direct sum of weight spaces with respect to some commutative subalgebra of $H^*$
and another commutative subalgebra of $W$.
This decomposition will give us an invariant $b\in K$.
We will then show that $b$ already defines the isomorphism type of $W$, 
and that the assignment $W\mapsto b$ gives us a one-to-one correspondence between the field $K$ and the different possible isomorphism types of cocycles 
(in case $K$ is algebraically closed).
We will then construct a generic form and describe the different forms.

We begin by considering the element $\gamma\in H^*$ given by $$\gamma(g^ix^j)=\zeta^i\delta_{j,0}.$$
The element $\gamma$ generates the group of group-like elements in $H^*$.
In particular, $\gamma :W\rightarrow W$ is an algebra map.
We thus write $W=\oplus W_i$, where $W_i$ is the subspace upon which $\gamma$ acts by $\zeta^i$.
Since the action of $\gamma$ does not depends on $\alpha$, we see that
$$W_i=span\{\widetilde{g^ix^j}\}_j.$$
Notice that the decomposition $W=\oplus_i W_i$ takes place in $\C_W$. 
We now consider another element in $H^*$, namely $\xi$ which is given by
$$\xi(g^ix^j)=\delta_{j,1}.$$
Then one can show that $\Delta(\xi)=1\ot \xi + \xi\ot \gamma^{-1}$ and that $\gamma \xi\gamma^{-1} = \zeta\xi$.
But this implies that $\xi(W_i)\subseteq W_{i+1}$.
We consider now also $Ker(\xi)=T_1$. It is easy to see, again, since the action of $H^*$ does not depend on $\alpha$, 
that this is just the space spanned by the powers of $\til{g}$.
Since $\gamma\xi\gamma^{-1}=\zeta\xi$, we have that $\gamma(T_1)=T_1$ and $W_i\cap T_1= K\til{g}^i$.
Now for every $i$ we have the map $\phi_i:W\rightarrow (W_1\cap T_1)^*\ot(W_1\cap T_1)\ot W\rightarrow (W_1\cap T_1)^*\ot W$ 
which is the composition of $coev_{(W_1\cap T_1)}\ot 1$
with the map $a\ot b\ot c\mapsto a\ot (bc-\zeta^i cb)$ (the multiplication here is the multiplication in $W$).
We have that $Ker(\phi_i)=\{y| \til{g} y \til{g}^{-1}=\zeta^i y\}$ (we have used the coevaluation and the dual of $W_1\cap T_1$ so that $Ker(\phi_i)$ will be a subspace of $W$).
Since $\til{g}^n\in K^{\times}$, conjugation by $\til{g}^n$ is trivial,
and all the eigenvalues of conjugation by $\til{g}$ are of the form $\zeta^i$ for some $i$.
We therefore have a second direct sum decomposition, $$W_i=\oplus_{j} W_{i,j}$$
where $W_{i,j}= W_i\cap Ker(\phi_j)$ (to see that this is a direct sum we need to check that conjugation by $\til{g}$ stabilizes $W_i$, but this is immediate).
A direct calculation shows that we have $\xi(W_{i,j})\subseteq W_{i+1,j-1}$.
We claim now the following:
\begin{lemma}
 We have $dim_K(W_{i,j})=1$ for each $i$ and for each $j$.
\end{lemma}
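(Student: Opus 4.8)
The plan is to merge the two decompositions of $W$ into a single $\Z/n\times\Z/n$-grading, regroup it along a diagonal, and then observe that on each diagonal piece the operator $\xi$ is a regular nilpotent; a dimension count then forces all the bigraded pieces to be lines.

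First I would collect the ingredients, all of which are already available above. We have $W=\bigoplus_{i\in\Z/n}W_i$ with each $W_i=\mathrm{span}_K\{\widetilde{g^ix^j}:0\le j\le n-1\}$ of dimension $n$, so $\dim_KW=n^2$. Conjugation by $\widetilde g$, which I will write $c_g$, preserves each $W_i$, and since $\widetilde{g}^{\,n}$ is a nonzero scalar (hence central) we have $c_g^{\,n}=\mathrm{id}$; thus $c_g$ is diagonalizable on $W_i$ with eigenvalues among the powers of $\zeta$, and $W_{i,j}$ is the $\zeta^j$-eigenspace of $c_g$ inside $W_i$, with $\xi(W_{i,j})\subseteq W_{i+1,j-1}$. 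Next, $\xi$ is nilpotent on $W$ with $\xi^n=0$ (from the explicit action $\xi\cdot\widetilde{g^ax^b}=(1+\zeta+\dots+\zeta^{b-1})\widetilde{g^{a+1}x^{b-1}}$, which also re-establishes $\ker\xi=T_1$). Finally $\ker\xi=T_1=\bigoplus_{a\in\Z/n}K\widetilde{g^a}$ is $n$-dimensional, and each $\widetilde{g^a}$ lies in $W_{a,0}$ because $\widetilde g$ commutes with the powers of $\widetilde g$.

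Then I would regroup. For $s\in\Z/n$ put $W^{(s)}=\bigoplus_{i+j\equiv s}W_{i,j}$; since $\xi$ sends $W_{i,j}$ into $W_{i+1,j-1}$, each $W^{(s)}$ is $\xi$-stable, and $W=\bigoplus_{s\in\Z/n}W^{(s)}$. Because the elements $\widetilde{g^a}$ lie in pairwise different summands $W^{(a)}$, we get $\ker\!\big(\xi|_{W^{(s)}}\big)=W^{(s)}\cap\ker\xi=K\widetilde{g^s}$, a line. A nilpotent endomorphism of a finite-dimensional vector space whose kernel is a line is a single Jordan block, so $\xi|_{W^{(s)}}$ is one Jordan block, necessarily of size $\le n$ since $\xi^n=0$; hence $\dim_KW^{(s)}\le n$. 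But $\sum_{s}\dim_KW^{(s)}=\dim_KW=n^2$ over $n$ values of $s$, so $\dim_KW^{(s)}=n$ for every $s$ and $\xi|_{W^{(s)}}$ is a single Jordan block of full size $n$.

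Finally I would read off the answer. Fix $s$ and choose $v\in W^{(s)}$ with $\xi^{n-1}v\neq 0$. Writing $v=\sum_j v_j$ with $v_j\in W_{s-j,j}$, the elements $\xi^{n-1}v_j$ lie in pairwise distinct bigraded summands of $W^{(s)}$ (as $\xi$ has bidegree $(1,-1)$), so $\xi^{n-1}v_j\neq 0$ for some $j$ and we may take $v=v_j$ homogeneous. Then $v,\xi v,\dots,\xi^{n-1}v$ is a basis of $W^{(s)}$ (a Jordan chain of length $n=\dim_KW^{(s)}$) whose members are homogeneous with second indices $j,j-1,\dots,j-(n-1)$, i.e.\ running exactly once through $\Z/n$; hence every summand of $W^{(s)}$ is one-dimensional, and since $s$ was arbitrary, $\dim_KW_{i,j}=1$ for all $i,j$. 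The crux is the regrouping by $i+j$ together with the observation that a nilpotent operator with a one-dimensional kernel is a single Jordan block; after that the argument is essentially a dimension count, and I do not anticipate a genuine obstacle, the only computational inputs (the action of $\xi$ on the basis and $\xi^n=0$) being routine and largely recorded already.
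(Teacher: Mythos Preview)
Your proof is correct, and it is organized rather differently from the paper's. The paper builds nonzero elements of each $W_{i,j}$ by successive lifting under $\xi$: starting from $\tilde g^{\,i}\in W_{i,0}$, it chooses a preimage in $W_{i-1,1}$, then a preimage of that in $W_{i-2,2}$, and so on, invoking $\ker(\xi^j)=\operatorname{im}(\xi^{n-j})$ (which comes from the isomorphism $W\cong H^*$ of $H^*$-modules); a final dimension count forces every $W_{i,j}$ to be a line. You instead regroup along the diagonals $W^{(s)}=\bigoplus_{i+j\equiv s}W_{i,j}$ and observe that $\xi|_{W^{(s)}}$ is nilpotent with one-dimensional kernel $K\tilde g^{\,s}$, hence a single Jordan block of size at most $n$; the global dimension count pins the size to exactly $n$, and a homogeneous Jordan chain exhibits one basis vector in each bigraded summand. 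Both arguments ultimately rest on the same two facts about $\xi$ (nilpotency of order $n$ and an $n$-dimensional kernel consisting of the $\tilde g^{\,a}$), but your Jordan-block packaging makes the diagonal stability explicit and replaces the inductive lifting with a single structural observation. One very minor remark: your explicit formula for $\xi\cdot\widetilde{g^ax^b}$ is off by a factor $\zeta^{-(b-1)}$, but this is harmless since you only use that the coefficient is nonzero for $1\le b\le n-1$.
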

\begin{proof}
We know that the kernel of $\xi$ is $span_K\{\til{g}^i\}_i$. We have that $\til{g}^i\in W_{i,0}$.
Also, we know that as an $H$-comodule, $W$ is isomorphic to $H$.
This means that $W$ is isomorphic to $H$ as an $H^*$-module.
But this means that there are elements $t_{i,1}\in W$ such that $\xi(t_{i,1})=\til{g}^i$.
Now, $t_{i,1}$ will be a sum of an element $s_{i,1}\in W_{i-1,1}$ and an element in $T_1$. Since $T_1\cap W_{i-1,1}=0$, the element $s_{i,1}$ is well defined.
We thus have, without loss of generality, that $t_{i,1}\in W_{i-1,1}$.
We can now continue in a similar fashion: assuming that $n>2$, there are also element $t_{i,2}\in W$ such that $\xi(t_{i,2})=t_{i,1}$
(we use here the fact that for every $j<n$ we have that $Ker(\xi^j)=im(\xi^{n-j})$).
Again, $t_{i,2}$ will be the sum of an element in $W_{i-2,2}$ and an element in $T_1$. Since $T_1\cap W_{i-2,2}=0$, this element is uniquely defined.
We thus assume, without loss of generality, that $t_{i,2}\in W_{i-2,2}$.
We continue up to $n-1$ in this way.
We thus got, for every $i=0,\ldots,n-1$, $j=0,\ldots,n-1$, a nonzero element $t_{i+j,j}\in W_{i,j}$ (indices are modulo $n$).
So for each $i,j$ we have that $dim_K(W_{i,j})\geq 1$. But then the dimensions already sum up to $n^2$, which is the dimension of the algebra $W$,
so we must have an equality.
\end{proof}
Notice that the last lemma says something stronger. We can deduce that the restriction of $\xi$ to $W_{i,j}$ gives us an isomorphism 
$\xi:W_{i,j}\rightarrow W_{i+1,j-1}$ for every $j\neq 0$.
In particular, consider $1\in W_{0,0}$. There is a unique element $t\in W_{-1,1}$ such that $\xi(t)=1$.
Since the multiplication respects the double grading on $W$ we have that $t^n\in W_{0,0}=span_K\{1\}$.
So $t$ is an invariant vector, and $t^n=b\in K$ is an invariant of the $H$-comodule algebra $W$.
We have that $t^i\in W_{-i,i}$ for every $i< n$. We claim that $t^i\neq 0$ for every $i<n$.
We have that $\xi(t^i)= (1+\zeta+\cdots + \zeta^{i-1})t^{i-1}$, so this follows easily by induction.
This already implies that for every $i=0,\ldots, n-1$, $t^i$ spans $W_{-i,i}$.
The restriction to the subalgebra generated by $\tilde{g}$ will give us that $\tilde{g}^n=a\in K$ is non zero (because this is a two-cocycle on the group $C_n$).
We thus have that for every $i=0,\ldots,n-1$ and every $j=0,\ldots,n-1$ $\til{g}^jt^i$ spans $W_{j-i,i}$.

It follows that our algebra $W$ has a basis given by $\{\til{g}^it^j\}_{i,j}$ subject to the relations $$\til{g}^n=a, t^n=b,$$ $$\til{g}t\til{g}^{-1}=\zeta t.$$
The coaction of $H$ is given by:
$$\rho(\til{g})=\til{g}\ot g$$ $$\rho(t)= t\ot g^{-1} + 1\ot g^{-1}x$$ 
Since $\til{g}$ and $x$ are generators of $W$, this determines $\rho$.
This shows that the isomorphism type of $W$ depends only on the pair $(a,b)$.
Moreover, it is not hard to show that for any pair $(a,b)$ we get an $H$-comodule algebra in this way.
It is possible, however, that different values of $a$ will give us isomorphic algebras (we have already seen that $b$ is an invariant of the algebra).
Indeed, if we change $\til{g}$ to be $x\til{g}$ for some $x\in K$, then we replace $a$ by $ax^n$. 
Since $K$ is assumed to be algebraically closed, we can assume without loss of generality that $a=1$. 
Thus, over $K$ the equivalence classes of cocycles on $H$ are in one-to-one correspondence with elements of $K$, 
and the field of invariants for the cocycle which corresponds to $b$ is $K_0=\Q(\zeta,b)$.

We construct now a generic form for the algebra $W$ which corresponds to $(1,b)$.
We take $A=Sym((W_{1,0})^*)_f$, where $f$ is a basis element of $(W_{1,0})^*$.
The resulting base algebra will be $B_W=\Q(\zeta,b)[f^{\pm n}]$ (so it will be a Laurent polynomial ring in one variable $a:=f^n$).
The generic form will then be generated over $B_W$ by $\til{g}$ and $t$, subject to the relations written above and with the $H$-coaction written above. 
The only difference is that now $a$ will be a generic (invertible) element, and not a specific element of the ground field.
It is easy to see that for any extension field $\Q(\zeta,b)\subseteq K_1$ the forms correspond to $(a,b)$ and $(a',b)$ will be isomorphic if and only if 
$a/a'\in (K_1^{\times})^{n}$,
and thus the different forms are in one-to-one correspondence with the group $K_1^{\times}/(K_1^{\times})^n$
\end{subsection}

\begin{subsection}{Products of Taft Hopf algebras}
We shall study now the same question for the tensor product of Taft Hopf algebras.
Assume that for every $i=1,\ldots,z$, $H_i$ is a Taft Hopf algebra of dimension $n_i^2$.
Let $n=l.c.m(n_i)$, and let $\zeta$ be a primitive $n$-th root of unity.
Then $H_i$ is generated by $g_i,x_i$, subject to the relations $g_i^{n_i}-1=x_i^{n_i}=0$, $g_ix_ig_i^{-1}=\zeta^{c_i}x_i$
where $\zeta^{c_i}$ is a primitive $n_i$-th root of unity.
The comultiplication in $H_i$ is given by $\Delta(g_i)=g_i\otimes g_i$ and $\Delta(x_i)=x_i\otimes 1 + g_i\otimes x_i$.
We write $H=\bigotimes_{i=1}^z H_i$. We shall classify all algebras of the form $^{\alpha}H$ and describe the generic forms.

So let $\alpha$ be a two-cocycle on $H$. 
To begin with, the restriction of $\alpha$ to each of the algebras $H_i$ will give us an $H_i$ comodule algebra.
By the results of the last subsection, 
we can assume that this algebra is generated by the elements $\tilde{g_i}$ and $t_i$
subject to the relations $t_i^{n_i}=b_i$, $\til{g_i}^{n_i}=a_i$ and $\til{g_i}t_i\til{g_i}^{-1}=\zeta^{c_i}t_i$.
The only thing that we need in order to understand $^{\alpha}H$ as well, is to understand how the subalgebras $^{\alpha}H_i$ commute with one another.

We begin by looking on the restriction of $\alpha$ to the group algebra of $G=\langle g_1,g_2,\ldots,g_z\rangle$.
This is a finite abelian group, and we understand well the elements in $H^2(G,K^{\mult})$.
The invariants of $\alpha|_G$ will be given by the scalars $\zeta^{b_{ij}}$ such that $\tilde{g_i}\til{g_j}=\zeta^{b_{ij}}\til{g_j}\til{g_i}$.
Notice that $\zeta^{b_{ij}}$ must be an $n_i$ and $n_j$ root of unity. In other words, $n|c_ib_{ij}$ and $n|c_jb_{ij}$. 
Notice also that $b_{ii}= b_{ij}+b_{ji}=0 \textrm{ mod } n$.

The cocycle $\alpha|_G$ is completely determined by giving these scalars together with the scalars $a_i$

We have that $\rho(t_j)=t_j\otimes g_j^{-1}+ 1\otimes g_j^{-1}x_j$ and $\rho(\til{g_i})=\til{g_i}\otimes g_i$ and therefore
$$\rho(\til{g_i}t_j\til{g_i}^{-1})=\til{g_i}t_j\til{g_i}^{-1}\otimes g_j^{-1} + 1\otimes g_ig_j^{-1}x_jg_i^{-1} = $$
$$ \til{g_i}t_j\til{g_i}^{-1}\otimes 1 + 1\otimes g_j^{-1}x_j$$
So if we write $t_{ij}=\til{g_i}t_j\til{g_i}^{-1}-t_j$ we have that $\rho(t_{ij})=t_{ij}\ot g_j^{-1}$
and therefore $t_{ij}\in K\cdot \til{g_j}^{-1}$.
By conjugating $t_{ij}$ with $\til{g_j}$ we see that $\til{g_j}t_{ij}\til{g_j}^{-1}=\zeta^{c_j}t_{ij}$.
But since $t_{ij}\in K\cdot \til{g_j}^{-1}$ we also have that $\til{g_j}t_{ij}\til{g_j}^{-1}=t_{ij}$.
This implies that $t_{ij}=0$ and therefore $\til{g_i}t_j\til{g_i}^{-1}=t_j$.

The last thing we need to understand is the commutation relations between $t_i$ and $t_j$ for $i\neq j$.
Consider the element $t_it_j - t_jt_i$.
A direct calculation shows that $\rho(t_it_j-t_jt_i)=(t_it_j-t_jt_i)\otimes {g_i}^{-1}{g_j}^{-1}$ and therefore $t_it_j-t_jt_i=\lambda_{i,j}\til{g_i}^{-1}\til{g_j}^{-1}$.
We call the indices $i$ and $j$ \textit{connected}, and we write $i\con j$ if $\lambda_{ij}\neq 0$.
The fact that $i$ and $j$ are connected has some consequences:
\begin{lemma}
Let $i$ and $j$ be connected indices.
Then it holds that $b_{ij}=-c_i=c_j$ \textit{ mod n} and $b_{ik} + b_{jk} = 0$ \textit{ mod n} for every third index $k$ which is not $i$ nor $j$.
\end{lemma}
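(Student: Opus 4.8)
The plan is to extract all three congruences from the single relation $t_it_j-t_jt_i=\lambda_{ij}\til{g_i}^{-1}\til{g_j}^{-1}$ (available to us since $i\con j$ means $\lambda_{ij}\neq 0$) by conjugating it successively by $\til{g_i}$, by $\til{g_j}$, and by $\til{g_k}$ for a third index $k\neq i,j$, and in each case comparing the coefficient of $\til{g_i}^{-1}\til{g_j}^{-1}$ on the two sides. This comparison is legitimate because $\til{g_i}^{-1}\til{g_j}^{-1}$ is invertible, so a scalar multiple of it determines the scalar. On the left-hand side the conjugations are governed by the already-established rules $\til{g_i}t_i\til{g_i}^{-1}=\zeta^{c_i}t_i$, $\til{g_j}t_j\til{g_j}^{-1}=\zeta^{c_j}t_j$ and $\til{g_l}t_m\til{g_l}^{-1}=t_m$ for $l\neq m$, while on the right-hand side they are governed by $\til{g_i}\til{g_j}=\zeta^{b_{ij}}\til{g_j}\til{g_i}$ and its consequences for inverses.

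Carrying this out: conjugating by $\til{g_i}$ turns the left side into $\zeta^{c_i}(t_it_j-t_jt_i)$ and the right side into $\lambda_{ij}\,\til{g_j}^{-1}\til{g_i}^{-1}=\lambda_{ij}\zeta^{-b_{ij}}\til{g_i}^{-1}\til{g_j}^{-1}$, forcing $c_i\equiv-b_{ij}\pmod n$. Conjugating by $\til{g_j}$ turns the left side into $\zeta^{c_j}(t_it_j-t_jt_i)$ and the right side into $\lambda_{ij}\zeta^{b_{ij}}\til{g_i}^{-1}\til{g_j}^{-1}$, forcing $c_j\equiv b_{ij}\pmod n$; together these give $b_{ij}\equiv-c_i\equiv c_j\pmod n$, the first assertion. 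Finally, conjugating by $\til{g_k}$ with $k\neq i,j$ leaves the left side unchanged, since $\til{g_k}$ commutes with both $t_i$ and $t_j$, while it multiplies the right side by $\zeta^{b_{ik}+b_{jk}}$, because $\til{g_k}\til{g_i}^{-1}\til{g_k}^{-1}=\zeta^{b_{ik}}\til{g_i}^{-1}$ and $\til{g_k}\til{g_j}^{-1}\til{g_k}^{-1}=\zeta^{b_{jk}}\til{g_j}^{-1}$; comparison then forces $b_{ik}+b_{jk}\equiv0\pmod n$, which is the last assertion.

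I do not expect a genuine obstacle; the lemma is a short direct computation. The only thing needing care is the bookkeeping of signs when moving a factor $\til{g}^{-1}$ past another generator, namely the rule $\til{g_a}\til{g_b}^{-1}\til{g_a}^{-1}=\zeta^{-b_{ab}}\til{g_b}^{-1}$ (obtained by inverting $\til{g_a}\til{g_b}\til{g_a}^{-1}=\zeta^{b_{ab}}\til{g_b}$) combined with the antisymmetry $b_{ab}=-b_{ba}\bmod n$, together with the observation that, since $\zeta$ is a primitive $n$-th root of unity, every equality of scalars produced this way is only an equality of exponents modulo $n$ — which is why the statement is phrased in terms of congruences mod $n$.
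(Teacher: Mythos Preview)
Your proof is correct and follows exactly the approach the paper indicates: conjugate the relation $t_it_j-t_jt_i=\lambda_{ij}\til{g_i}^{-1}\til{g_j}^{-1}$ by $\til{g_i}$, $\til{g_j}$, and $\til{g_k}$ and use $\lambda_{ij}\neq 0$ to compare scalars. You have simply supplied the explicit bookkeeping that the paper omits.
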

\begin{proof}
We conjugate the equation $t_it_j - t_jt_i = \lambda_{ij}\til{g_i}^{-1}\til{g_j}^{-1}$ by $\til{g_i}$, $\til{g_j}$ and $\til{g_k}$.
The result follows from the fact that $\lambda_{ij}$ is a nonzero scalar. 	
\end{proof}
We can now write the algebra $W$ and the coaction explicitly:
$W$ is generated by the elements $\til{g}_i,t_i$ subject to the following list of relations:
$$\til{g_i}^{n_i}=a_i,\quad t_i^{n_i}=b_i,\quad \til{g_i}t_i\til{g_i}^{-1}=\zeta^{c_i}t_i$$
$$\til{g_i}\til{g_j}=\zeta^{b_{ij}}\til{g_j}\til{g_i},\quad \til{g_i}t_j\til{g_i}^{-1}= t_j \textrm{ for }i\neq j$$
$$t_it_j-t_jt_i = \lambda_{ij}\til{g_i}^{-1}\til{g_j}^{-1}$$
$$\rho(\til{g_i})=\til{g_i}\ot g_i$$
$$\rho(t_i) = t_i\ot g_i^{-1} + 1\ot g_i^{-1}x_i.$$
As we have seen in the study of cocycles over group algebras and over Taft algebras, 
the cohomology class of the cocycle $\alpha$ determines the scalars $b_i$ and $\zeta^{b_{ij}}$.
The presence of the scalars $\lambda_{ij}$ makes it harder to understand what are the other invariants of the cocycle.
To do this, notice first that if $i\con j$ then $$\Lambda_{ij}:=(t_it_j-t_jt_i)^{n_i} = \pm \lambda_{ij}^{n_i}a_i^{-1}a_j^{-1}$$
is an invariant of $\alpha$.
Second, if $i_1\con i_2\cdots \con i_m\con i_1$ then $$\Lambda_{i_1i_2\ldots i_m}= \lambda_{i_1i_2}\lambda^{-1}_{i_2i_3}\cdots \lambda^{(-1)^{m+1}}_{i_mi_1}$$
is also an invariant of $\alpha$ (notice that the only possibility in which $m$ is odd is if $c_i=n/2$ for $i=i_1,\ldots i_m$).
It is possible to show that over an algebraically closed field we can find a cocycle equivalent to $\alpha$ which can be written in terms of the invariants
$b_i$, $\zeta^{b_{ij}}$, $\Lambda_{ij}$ and $\Lambda_{i_1i_2\ldots i_m}$ (by altering $\tilde{g_i}$ by a nonzero scalar). 
We summarize our discussion in the following proposition:
\begin{proposition}
The cocycle $\alpha$ is determined (up to equivalence) by the scalars $a_i$, $b_i$, $\zeta^{b_{ij}}$, $\lambda_{ij}$.
The cocycle $\alpha$ determines the scalars $b_i$, $\zeta^{b_{ij}}$, $\Lambda_{ij}$ and $\Lambda_{i_1i_2\ldots i_m}$.
These scalars satisfy the relations: $\zeta^{b_{ij}}$ is an $n_i$ root of unity, $\zeta^{b_{ij}+b_{ji}}=1$, $\zeta^{b_{ii}}=1$,
and if $\lambda_{ij}\neq 0$, then $\zeta^{b_{jk}+b_{ik}}=1$ and $\zeta^{b_{ij}}=\zeta^{c_j}$.
Moreover, any collection $(b_i,\zeta^{b_{ij}},\lambda_{ij})$ 
of scalars which satisfy the above relations will give us a cocycle on $H$.
\end{proposition}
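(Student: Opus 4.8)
The plan is to combine the explicit presentation of $W={}^{\alpha}H$ obtained just before the proposition with a dimension count. First I would record that the generators-and-relations description shows $W$ is spanned by the monomials $\til{g_1}^{e_1}t_1^{f_1}\cdots\til{g_z}^{e_z}t_z^{f_z}$ with $0\le e_i,f_i<n_i$, so $\dim_K W\le\prod_i n_i^2=\dim_K H$; since ${}^{\alpha}H$ equals $H$ as an $H$-comodule we have $\dim_K W=\prod_i n_i^2$, hence these monomials form a basis and the listed relations are a complete set of defining relations. This gives the first assertion at once: the multiplication on $W$ is determined by the structure constants $a_i,b_i,\zeta^{b_{ij}},\lambda_{ij}$, and since a comodule algebra isomorphic to $H$ as a comodule is of the form ${}^{\beta}H$ for a cocycle $\beta$ cohomologous to $\alpha$, the equivalence class of $\alpha$ is likewise determined by these scalars.

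For the second assertion I would note that $b_i=t_i^{n_i}$ and the relation $\til{g_i}\til{g_j}=\zeta^{b_{ij}}\til{g_j}\til{g_i}$ are expressed through morphisms of $\C_W$ exactly as in the Taft and group-algebra subsections (the element $t_i$ is the canonical generator of the one-dimensional weight space for the $i$-th factor, and $\zeta^{b_{ij}}$ is read off from the restriction of $\alpha$ to $\langle g_i,g_j\rangle$), so $b_i$ and $\zeta^{b_{ij}}$ are invariants; the scalars $\Lambda_{ij}$ and $\Lambda_{i_1\cdots i_m}$ were already shown to be invariants above. The relations among all these scalars follow by conjugating the defining relations by the $\til{g_k}$ and using that $\til{g_i}^{n_i}\in K^{\times}$ acts trivially by conjugation, exactly as in the lemma preceding the proposition.

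The substantive point is the converse. Given scalars $(a_i,b_i,\zeta^{b_{ij}},\lambda_{ij})$ satisfying the stated relations, I would define $W$ as the $K$-algebra on generators $\til{g_i},t_i$ with the listed relations and the coaction $\rho$ given on generators by the displayed formulas. The hard part will be verifying that $\rho$ is well defined (compatible with every defining relation, counital and coassociative), making $W$ an $H$-comodule algebra, and that $W$ has dimension exactly $\prod_i n_i^2$ with the monomials above as a basis. For the latter I would run a Bergman diamond-lemma argument: fix an ordering of the generators, take the relations as rewriting rules, and check that all overlap ambiguities are resolvable. The overlaps $\til{g_i}^{n_i}\til{g_i}$ and $t_i^{n_i}t_i$ are trivially resolved, the triple overlaps among $\til{g_i},\til{g_j},t_j$ are resolved because $\til{g_i}t_j\til{g_i}^{-1}=t_j$ is forced by the comodule structure, and the triple overlaps among $t_i,t_j,t_k$ coming from $t_it_j-t_jt_i=\lambda_{ij}\til{g_i}^{-1}\til{g_j}^{-1}$ are resolved precisely by the relations $\zeta^{b_{ij}+b_{ji}}=1$, $\zeta^{b_{ij}}=\zeta^{c_j}$ when $\lambda_{ij}\ne 0$, and $\zeta^{b_{ik}+b_{jk}}=1$. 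Once $W$ is known to be a $\prod_i n_i^2$-dimensional $H$-comodule algebra, one checks that the coaction on the monomial basis mirrors that of $H$, so $W\cong H$ as an $H$-comodule, and then the quoted fact that every such comodule algebra is of the form ${}^{\alpha}H$ produces the desired cocycle $\alpha$. I expect the diamond-lemma bookkeeping for the $t_it_j-t_jt_i$ overlaps — where one must re-expand the $\til{g}^{-1}$'s as positive powers using $\til{g_i}^{n_i}=a_i$ and move them past the $t$'s — to be the most delicate computation.
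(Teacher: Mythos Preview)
Your treatment of the first two assertions matches the paper's: both rely on the explicit presentation of $W$ developed in the preceding discussion and on the invariance arguments already carried out for group algebras and single Taft algebras.

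For the converse, however, the paper does not use the diamond lemma. It instead builds the algebra inductively, by Ore extensions and crossed products, and proves by induction on the number of tensor factors $z$ that the result is of the form ${}^{\alpha}H$. Concretely, one starts from the twisted group algebra $K^{\alpha|_G}G$ (a crossed product), then adjoins the generators $t_i$ one at a time as iterated Ore extensions with the appropriate automorphism and $\sigma$-derivation, quotienting by $t_i^{n_i}-b_i$ at each step. The dimension count is then automatic, since an Ore extension is free over its base, and the comodule-algebra structure can be checked inductively on the factors. Your diamond-lemma route is a legitimate alternative and will also work; the overlap ambiguities you list do resolve under the stated relations (the $t_it_jt_k$ ambiguity, for instance, collapses immediately once one uses $\til{g_l}t_m=t_m\til{g_l}$ for $l\neq m$). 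The trade-off is that the Ore-extension approach packages the bookkeeping into known structural results and makes the induction on $z$ transparent, whereas your approach front-loads all the verification into a single combinatorial check. Either way the content is the same; the paper simply opts for the more structural construction and leaves the details implicit.
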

\begin{proof}
The fact that $b_i$ and $\zeta^{b_{ij}}$ are invariants of the cocycle $\alpha$ follows from our discussion on group algebras and on Taft algebras. 
Since we wrote the algebra in terms of the scalars in the proposition, 
the collection of scalars $b_i$, $a_i$, $\zeta^{b_{ij}}$, $\lambda_{ij}$ determines the equivalence class of the cocycle $\alpha$ (also over a nonalgebraically closed field).

In the other direction, if we have such a collection of scalars which satisfies the condition of the proposition, 
it is possible to construct an algebra with these relations and coaction. 
The only nontrivial part is to show that this algebra is really of the form $^{\alpha}H$ 
(a priori, it is possible that the relations we have will define the trivial algebra, for example).
The algebra can be constructed by Ore extensions and crossed products, 
and we can prove by induction on the number of factors $z$ that the algebra is really of the form $^{\alpha}H$.
\end{proof}
We will now construct the generic form of $^\alpha{H}$.
As before, we will take $M=\oplus_i (K\cdot \til{g_i})^*$ and $A=Sym(M)_f$ where $f=\prod f_i$. ($f_i$ is the dual basis for $\til{g_i}$ for the space $K\cdot \til{g_i}$).
Then the base algebra will be $B=\Q(\zeta,b_i,\Lambda_{ij},\Lambda_{i_1\ldots i_m})[a_1^{\pm 1},a_2^{\pm 1},\ldots a_z^{\pm 1}]$, where $a_i=f_i^{n_i}$,
and the generic form will be exactly the algebra written above (the only difference is that now $a_i$ are generic elements, and not elements of the ground field).
Notice that we can take even a smaller algebra: indeed, if $i\con j$ then the vector $t_it_j-t_jt_i = \lambda_{ij}\til{g_i}^{-1}\til{g_j}^{-1}$ will be contained in any form.
We can then take $M=\oplus_i (K\cdot \til{g_i})^*$ where we take only one index $i$ from each equivalence class of the equivalence relation generated by $\con$.
\end{subsection}
\subsection*{Acknowledgements}
The author was supported by the Danish National Research Foundation (DNRF) through the Centre for Symmetry and Deformation.
The author is grateful to Eli Aljadeff, Apostolos Beligiannis, Julien Bichon, Pavel Etingof, Gaston Garcia, Christian Kassel, Bernhard Keller, 
Henning Krause, Akira Masuoka and Jan Stovicek for guidance and for fruitful discussions.
\end{section}

\end{document}